\newcommand{\ZZ}{\mathbf Z}
\newcommand{\QQ}{\mathbf Q}
\newcommand{\kbar}{\overline{k}}
\newcommand{\FF}{\mathbf F}
\newcommand{\gl}{\mathfrak{gl}}
\newcommand{\ZZhat}{\widehat{\ZZ}}
\newcommand{\g}{\mathfrak{g}}
\newcommand{\m} {\mathfrak m}
\DeclareFontFamily{OT1}{rsfs}{}
\DeclareFontShape{OT1}{rsfs}{n}{it}{<-> rsfs10}{}
\DeclareMathAlphabet{\mathscr}{OT1}{rsfs}{n}{it}
\renewcommand{\O}{\mathcal{O}}
\newcommand{\on}[1]{\operatorname{#1}}
\newcommand{\Hom}{\on{Hom}}
\newcommand \tensor[1] {\otimes_{#1}}
\newcommand{\Aut}{\on{Aut}}
\newcommand{\End}{\on{End}}
\newcommand{\Spec}{\on{Spec}}
\newcommand{\Lie}{\on{Lie}}
\newcommand{\GL}{\on{GL}}
\newcommand{\SO}{\on{SO}}
\newcommand{\Mat}{\on{Mat}}
\newcommand{\into}{\hookrightarrow}
\theoremstyle{plain}
\newtheorem{lem}{Lemma}
\newtheorem{thm}[lem]{Theorem}
\newtheorem{prop}[lem]{Proposition}
\newtheorem{cor}[lem]{Corollary}
\newtheorem{fact}[lem]{Fact}
\theoremstyle{definition}
\newtheorem{defn}[lem]{Definition}
\newtheorem{example}[lem]{Example}
\newtheorem{remark}[lem]{Remark}
\newcommand{\ttm}[4]{\begin{pmatrix}
#1 & #2 \\
#3 & #4
\end{pmatrix}}
\numberwithin{equation}{section}
\numberwithin{lem}{section}
\newcommand{\ad}{\on{ad}}
\newcommand{\Sp}{\on{Sp}}
\newcommand{\Orth}{\on{O}}
\newcommand{\GSp}{\on{GSp}}
\newcommand{\GO}{\on{GO}}
\newcommand{\Gm}{\mathbf{G}_m}
\newcommand{\GAut}{\on{GAut}}
\newcommand{\Fil}{\on{Fil}}
\newcommand{\Nil}{\on{Nil}}
\newcommand{\rhobar}{{\overline{\rho}}}
\newcommand{\adrho}{\on{ad}(\rhobar)}
\newcommand{\adzerorho}{\on{ad}^0(\rhobar)}
\newcommand{\Mbar}{\overline{M}}
\newcommand{\Nbar}{\overline{N}}
\newcommand{\Phibar}{{\overline{\Phi}}}
\newcommand{\onto}{\twoheadrightarrow}
\newcommand{\inverselimit}{\varprojlim}
\newcommand{\ob}{\on{ob}}
\newcommand{\C}{\mathcal{C}}
\newcommand{\Chat}{\widehat{\C}}
\newcommand{\D}{\mathcal{D}}
\newcommand{\Spf}{\on{Spf}}
\newcommand{\tildetau}{{\widetilde{\tau}}}
\newcommand{\Res}{\on{Res}}
\newcommand{\Ind}{\on{Ind}}
\newcommand{\id}{\on{id}}
\newcommand{\Ad}{\on{Ad}}
\newcommand{\zfrak}{\mathfrak{z}}
\newcommand{\Span}{\on{Span}}
\newcommand{\Ghat}{\widehat{G}}
\newcommand{\blockmatrix}{\ttm}
\newcommand{\nr}{{\on{nr}}}
\newcommand{\tame}{{\on{t}}}
\newcommand{\Sets}{{\on{Sets}}}
\newcommand{\mr}{\on{m.r.}}
\begin{document}
\title{Minimally Ramified Deformations when $\ell \neq p$}
\author{Jeremy Booher}
\email{jeremybooher@math.arizona.edu}
\date{\today}
\address{Department of Mathematics\\
  University of Arizona\\
  Tucson, Arizona 85721}
  

\begin{abstract}
Let $p$ and $\ell$ be distinct primes, and $\rhobar$ be an orthogonal or symplectic representation of the absolute Galois group of an $\ell$-adic field over a finite field of characteristic $p$.  We define and study a liftable deformation condition of lifts of $\rhobar$ ``ramified no worse than $\rhobar$'', generalizing the minimally ramified deformation condition for $\GL_n$ studied in \cite{cht08}.  The key insight is to restrict to deformations where an associated unipotent element does not change type when deforming.  This requires an understanding of nilpotent orbits and centralizers of nilpotent elements in the relative situation, not just over fields.
\end{abstract}

\maketitle

\tableofcontents

\section{Introduction}

Let $\ell$ and $p$ be primes, $L$ be a finite extension of $\QQ_\ell$, and $\Gamma_L$ be the absolute Galois group of $L$.  Suppose $\O$ is the ring of integers in a $p$-adic field with residue field $k$.  For a reductive group $G$, it is important to study deformations of a continuous representation $\rhobar : \Gamma_L \to G(k)$.  Information about the universal deformation ring, and quotients corresponding to restricted classes of deformations, have many applications, for example to producing congruences between modular forms, proving modularity lifting theorems, and understanding generalizations of Serre's conjecture and of the Breuil-M\'{e}zard conjecture.  The case $G = \GL_n$ has received the most attention.  In this paper, we assume $\ell \neq p$ and generalize the minimally ramified deformation condition for $\GL_n$ studied by Clozel, Harris, and Taylor \cite[\S 2.4.4]{cht08} to symplectic and orthogonal groups.

This question was originally motivated by the problem of producing geometric deformations of representations of the absolute Galois group of a number field using a generalization of a method introduced by Ramakrishna ~\cite{ram99} \cite{ram02}.  For use in Ramakrishna's method, we would like to define a deformation condition of lifts which are ``ramified no worse than $\rhobar$,'' such that the resulting deformation condition is liftable despite the fact that the unrestricted deformation condition for $\rhobar$ may not be liftable.  When $G = \GL_n$, the minimally ramified deformation condition defined in \cite[\S 2.4.4]{cht08} works.  Attempting to generalize the argument of \cite[\S 2.4.4]{cht08} to groups besides $\GL_n$ leads to a deformation condition based on parabolics which is \emph{not} liftable.  Instead, inspired by the arguments of 
\cite[\S3]{taylor08} we define a deformation condition for symplectic and orthogonal groups based on deformations of a nilpotent element of $\Lie G_k$.
This condition is liftable, which illustrates how genuinely new ideas are needed to study the deformation rings for representations valued in groups besides $\GL_n$. 

In \S\ref{sec:mrd}, we define a \emph{minimally ramified deformation condition} for symplectic and orthogonal groups after extending the residue field $k$.  This extension is harmless for the original application, and for that application it is also convenient to consider deformations with a fixed similitude character.  Our main result is the following, which is precisely the local input needed in Ramakrishna's method in the $\ell \neq p$ case.

\begin{thm} \label{thm:maintheorem}
Let $G$ be $\GSp_n$ or $\GO_n$ over $\ZZ_p$ with $p>n$, and let $\rhobar: \Gamma_L \to G(k)$ be a continuous representation with $\ell \neq p$.  After extending $k$, the minimally ramified deformation condition with fixed similitude character is a liftable deformation condition (in the sense of Definition~\ref{defn:liftable}), and its tangent space has dimension $\dim H^0(\Gamma_L ,\adzerorho)$.
\end{thm}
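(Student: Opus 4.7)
The plan is to reduce the statement to a question about a single nilpotent element of $\Lie G$, using the structure of the tame quotient of $\Gamma_L$. Since $\ell \neq p$, after extending $k$ we may assume $\rhobar$ is tame, so it is determined by $\Phibar = \rhobar(\sigma)$ for a Frobenius lift $\sigma$ together with $\overline{\tau} = \rhobar(\tau)$ for a topological generator of tame inertia, subject to $\Phibar \, \overline{\tau} \, \Phibar^{-1} = \overline{\tau}^{\ell}$. Because $p > n$, the Jordan decomposition of $\overline{\tau}$ yields $\overline{\tau} = s \exp(N)$ with $s$ semisimple of order prime to $p$ commuting with a nilpotent $N \in \Lie G_k$. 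The minimally ramified condition of \S\ref{sec:mrd} should then require a deformation $\rho_A$ to produce a nilpotent $N_A \in \Lie G_A$ that remains in the $G$-orbit of $N$ pointwise on $\Spec A$, with the similitude character fixed throughout.

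To prove liftability, I would take a small surjection $A' \onto A$ with kernel $I$ and a deformation $\rho_A$ satisfying the condition, and lift it in three stages. First, lift the semisimple part $s$ canonically using the fact that it has prime-to-$p$ order. Second, lift $N_A$ to a nilpotent $N_{A'} \in \Lie G_{A'}$ in the correct relative orbit; this is the crux and relies on showing that the orbit map $G \to G \cdot N$ is smooth in the relative situation, so that the scheme of allowed lifts is a torsor for $Z_G(N_A)$ over $\Spec A'$ and admits a section. Third, lift $\Phi$ from $A$ to $A'$ so as to satisfy the commutation relation with $\tau_{A'} = s_{A'} \exp(N_{A'})$; the obstruction sits in a cohomology group involving $Z_G(N_A)$ and $I$, and vanishes because $Z_G(N)$ is smooth over $\O$. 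Fixing the similitude character introduces no new obstruction because the similitude of each stage is already determined by the data being lifted.

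For the tangent space computation, liftability identifies the minimally ramified tangent space as the subspace of $H^1(\Gamma_L, \adzerorho)$ cut out by the infinitesimal orbit condition on $N$. A count using the local Euler characteristic formula at $\ell \neq p$, namely $\dim H^1(\Gamma_L, \adzerorho) = \dim H^0(\Gamma_L, \adzerorho) + \dim H^2(\Gamma_L, \adzerorho)$, combined with Tate local duality to interpret $H^2$ as $H^0$ of a twist, and the explicit description of the tangent space to the orbit of $N$ via $Z_G(N)$, yields the dimension $\dim H^0(\Gamma_L, \adzerorho)$.

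The hard part will be the geometric input: showing that over $\O$ (not only over $k$) the nilpotent orbit of $N$ is smooth, that $Z_G(N)$ is smooth of the expected dimension, and that the associated lifting problems are unobstructed. In good characteristic $p > n$ one can promote the classical Jacobson-Morozov and Bala-Carter theorems to the relative setting by integrating $\mathfrak{sl}_2$-triples to $\SL_2$-subgroups, but carrying this out carefully for $\GSp_n$ and $\GO_n$ while tracking the similitude character is the main technical work underlying the theorem, and is exactly what the abstract highlights as the key new ingredient.
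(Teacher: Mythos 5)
Your proposal contains a genuine gap at the very first step: the claim that ``after extending $k$ we may assume $\rhobar$ is tame'' is false. Extending the coefficient field does not change the ramification of $\rhobar$. Since $\ell \neq p$, the wild inertia subgroup $P_L \subset I_L$ is pro-$\ell$, and its image under $\rhobar$ is a finite $\ell$-group; this image consists of semisimple elements of $G(k)$ (since $\ell \neq p = \chr k$), but it need not be trivial, so $\rhobar$ need not factor through the tame quotient. The heart of \S\ref{sec:mr} of the paper is precisely the reduction of this wildly ramified situation to the tame case treated in \S\ref{sec:mrtame}: one fixes the open normal subgroup $\Lambda_L \subset \Gamma_L$ of pro-order prime to $p$ with $\Gamma_L/\Lambda_L \simeq T_q$, decomposes $V$ into $\Lambda_L$-isotypic pieces $\tau \otimes \overline{W}_\tau$, rigidly lifts the $\tau$'s (using that $\Lambda_L$ has prime-to-$p$ pro-order and carefully tracking the pairing through Cases 1--3 of \S\ref{sec:extendingtau}), and reformulates the deformation problem as a product over the $T_{L,\tau}$-representations $W_\tau$ valued in smaller classical groups $G_\tau$ (Proposition~\ref{prop:defproduct}). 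The tangent space count in Corollary~\ref{cor:minimallyramified} is then obtained by adding the tame contributions and matching against $h^0(\Gamma_L, \adzerorho)$ via Lemma~\ref{lem:schur}, not via the local Euler characteristic formula. Your proposal omits this entire decomposition step, which is where most of the work in the general case lies; within your incorrectly-reduced tame setting, the semisimple factor $s$ of $\rhobar(\tau)$ also does not actually appear (for a representation of $T_q$ with $\tau$ generating the pro-$p$ part, $\rhobar(\tau)$ is automatically unipotent).

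A secondary point: your sketch of what makes the centralizer-smoothness input difficult is off the mark. The paper does \emph{not} proceed by integrating $\mathfrak{sl}_2$-triples to $\SL_2$-subgroup schemes over $\O$. Instead it constructs explicit integral nilpotent representatives $N_\sigma$ for each admissible partition (\S\ref{sec:integralrep}), and proves $\O$-smoothness of $Z_G(N_\sigma)$ by a flatness criterion over a Dedekind base (Lemma~\ref{lem:flatnesscriterion}) that reduces matters to producing $\O$-points of $Z_G(N_\sigma)$ hitting every geometric component of the special fiber, which is verified by the explicit description of $C_N$ as a product of smaller orthogonal and symplectic groups (Examples~\ref{ex:spcentralizer}, \ref{ex:socentralizer}). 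The rest of your outline of the tame-case liftability argument -- lifting $N$ via smoothness of the orbit, then lifting $\Phi$ via smoothness of $Z_G(N)$ -- does track the structure of Proposition~\ref{prop:smoothnessmrtame}, but that is only the easier half of the theorem.
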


This can equivalently be expressed as exhibiting a formally smooth quotient of the universal lifting ring $R^\square_\rhobar$.  In this paper, we study only the local theory: the applications to producing geometric lifts are discussed in \cite{boohergeometric}.  
In the remainder of the introduction, we will sketch how to correctly generalize the minimally ramified deformation condition introduced for $\GL_n$ and analyze it.  The strategy could work for general $G$, but several pieces of the argument are specific to orthogonal or symplectic groups (or $\GL_n$), which was all that was needed for the original application.

The first step in \cite[\S 2.4.4]{cht08} is to reduce to studying certain tamely ramified representations.   They reduce the problem to defining a nice class of deformations for representations of the group $T_q :=  \ZZhat \ltimes \ZZ_p$, where the first factor is generated by a Frobenius $\phi$ and the second by an element $\tau$ in the inertia group.  They satisfy the relation $\phi \tau \phi^{-1} = q \tau$ for some $q$ prime to $p$.  This reduction generalizes without surprises to symplectic and orthogonal groups in \S\ref{sec:mr} (but the argument is genuinely restricted to orthogonal and symplectic groups as it relies heavily on the pairing).

The second step is to specify when a lift of $\rhobar : T_q \to \GL_n(k)$ is ``ramified no worse than $\rhobar$''.  For a coefficient ring $R$, a deformation $\rho : T_q \to \GL_n(R)$ is \emph{minimally ramified} according to \cite{cht08} when the natural $k$-linear map
 \begin{align} \label{eq:mr}
  \ker \left( (\rho(\tau) - 1_n)^i \right) \tensor{R} k \to \ker\left( (\rhobar(\tau)-1_n)^i \right)
 \end{align}
is an isomorphism for all $i$.  The deformation condition is analyzed as follows:
\begin{itemize}
 \item  defining $V_i = \ker\left( (\rhobar(\tau)-1_n)^i \right)$ gives a flag
 \[
  0 \subset V_r \subset V_{r-1} \subset \ldots \subset V_1 \subset k^n.
 \]
This flag determines a parabolic $k$-subgroup $\overline{P} \subset \GL_n$ (points which preserve the flag) such that $\rhobar(\tau) \in (\mathcal{R}_u \overline{P})(k)$ and $\rhobar(\phi) \in \overline{P}(k)$;

\item  lift $\overline{P}$ to a parabolic subgroup $P$ of $\GL_n$.  The deformation functor of such lifts is formally smooth, and for any minimally ramified deformation $\rho$ over $R$ there is a choice of such $P$ for which $\rho(\tau) \in (\mathcal{R}_u P)(R)$ and $\rho(\phi) \in P(R)$.  Conversely, any $\rho$ with this property is minimally ramified;

\item  Finally, for the standard block-upper-triangular choice of $P$, one shows the deformation functor
\[
 \{ (T,\Phi) : T \in \mathcal{R}_u P, \Phi \in P , \Phi T \Phi^{-1} = T^q, \overline{T} = \rhobar(\tau) , \overline{\Phi} = \rhobar(\phi) \}
\]
is formally smooth by building the universal lift over a power series ring: this uses explicit calculations with block-upper-triangular matrices.
\end{itemize}

To generalize beyond $\GL_n$, we need to replace \eqref{eq:mr} with a more group-theoretic criterion.  The naive generalization is to associate a parabolic $\overline{P}$ to $\rhobar$ and then use the following definition.

\begin{defn} \label{defn:ramifiedwrtp}
For a coefficient ring $R$, say a lift $\rho : T_q \to G(R)$ is \emph{ramified with respect to $\overline{P}$} provided that there exists a parabolic $R$-subgroup $P \subset G_R$ lifting $\overline{P}$ such that $\rho(\tau) \in (\mathcal{R}_u P )(R)$ and $\rho(\phi) \in P(R)$.
\end{defn}

This idea does not work.  Let us focus on the symplectic case to illustrate what goes wrong.

The first problem is to associate a parabolic subgroup to $\rhobar$.  Recall that parabolic subgroups of a symplectic group correspond to isotropic flags $0 \subset V_1 \subset \ldots \subset V_r \subset V_r^\perp \subset \ldots \subset V_1^\perp \subset k^{2n}$.   There is no reason that the flag determined by \eqref{eq:mr} is isotropic, so we would need some other method of producing a parabolic $\overline{P}$ such that $\rhobar(\tau) \in (\mathcal{R}_u \overline{P})(k)$.  In \cite{boreltits71}, Borel and Tits give a natural way to associate to the unipotent $\rhobar(\tau)$ a smooth connected unipotent $k$-subgroup of $G$.  The normalizer of this subgroup is always parabolic and so gives a candidate for $\overline{P}$.  However, working out examples in $\GL_n$ for small $n$ shows that this produces a different parabolic than the one determined by \eqref{eq:mr}.  This raises the natural question of how sensitive the smoothness of the deformation condition is to the choice of parabolic.

This leads to the second, larger problem: there are examples such that for \emph{every} parabolic $\overline{P}$ satisfying $\rhobar(\tau) \in (\mathcal{R}_u \overline{P})(k)$, not all deformations ramified with respect to $\overline{P}$ are liftable.
\begin{example} \label{ex:unliftable}
Take $L = \QQ_{29}$ and $k = \FF_7$.  
Consider the representation $\rhobar : T_{29} \simeq \ZZhat \ltimes \ZZ_7 \to \GSp_4(\FF_7)$ defined by
\[
 \rhobar(\tau ) = \begin{pmatrix}
                  1 & 0 & 1 & 0 \\
                  0 & 1 & 0 & 0\\
                  0 & 0 & 1 & 0 \\
                  0 & 0 & 0 & 1
                  \end{pmatrix}
\quad \text{and} \quad
\rhobar(\phi) = \begin{pmatrix}
                 1 & -1 & 0 & 0 \\
                 0 & 1 & 0 & 0\\
                 0 & 0 & 1 & 0 \\
                 0 & 0 & 1 & 1
                \end{pmatrix}.
\]
The deformation condition of lifts ramified relative to a parabolic $\overline{P}$ of $\GSp_4$ whose unipotent radical contains $\rhobar(\tau)$ is not liftable for any choice of $\overline{P}$: there are lifts to the dual numbers that do not lift to $\FF_7[\epsilon]/(\epsilon^3)$.  This is easy to check with a computer algebra system such as \cite{sage}, since the existence of lifts can be reduced to a problem in linear algebra.  This is a general phenomenon, which we will explain conceptually in \S\ref{sec:parabolicdefcondition}.
\end{example}

The correct approach is to define a lift $\rho : T_q \to G(R)$ to be minimally ramified if $\rho(\tau)$ has ``the same unipotent structure'' as $\rhobar(\tau)$.  It is more convenient to work with nilpotent elements, using the exponential and logarithm maps (defined for nilpotent and unipotent elements since $p > n$).  There are combinatorial parametrizations of nilpotent orbits of algebraic groups over an algebraically closed field, for example in terms of partitions or root data, which make precise the notion that the values of $N \in \g_\O$ in the special and generic fiber lie in the same nilpotent orbit.  In particular, for each nilpotent orbit $\sigma$, we  use the results of \S\ref{sec:integralrep} to choose particular elements $N_\sigma \in \g_\O$ with this property lifting $\Nbar \in \g_k$.  In \S\ref{sec:purenilplift}, we define the \emph{pure nilpotents lifting $\Nbar$} to be the $\widehat{G}(R)$-conjugates of $N_\sigma$ for a coefficient ring $R$.

\begin{example} \label{ex:changeorbit}
For example, let $G = \GL_3$ and 
\[
 \Nbar = \begin{pmatrix}
        0 & 1 & 0 \\
        0 & 0 & 0\\
        0 & 0 & 0
       \end{pmatrix}
\]
Consider the lifts
\[
 N_1 = \begin{pmatrix}
      0 & 1 & 0\\
      0 & 0 & 0\\
      0 & 0 & 0
     \end{pmatrix} \in \g \quad \text{and} \quad     
N_2 = \begin{pmatrix}
       0 & 1 & 0\\
       0 & 0 & p \\
       0 & 0 & 0
      \end{pmatrix} \in \g.
\]
Both are nilpotent under the embedding of $\O$ into its fraction field $K$.  The images of $N_1$ in $\g_K$ and $\g_k$ both lie in the nilpotent orbit corresponding to the partition $2+1$, so $N_1$ is an example of the type of nilpotent lift we want to consider.  On the other hand, the image of $N_2$ in $\g_K$ lies in the nilpotent orbit corresponding to the partition $3$, while the image on $\g_k$ lies in the orbit corresponding to $2+1$, so we do not want to use it.  The pure nilpotents lifting $\Nbar$ are $\widehat{G}(R)$-conjugates of $N_1$.
\end{example}

We finally define a lift
$\rho : T_q \to G(R)$ to be \emph{minimally ramified} provided $\rho(\tau)$ is the exponential of a pure nilpotent lifting $\log \rhobar(\tau) = \Nbar$.  Proposition~\ref{prop:smoothnessmrtame} shows that this deformation condition is liftable.
The main technical fact needed to analyze this deformation condition is that the scheme-theoretic centralizer $Z_G(N_\sigma)$ is smooth over $\O$ for $N_\sigma$ as above.  The smoothness of such centralizers over algebraically closed fields is well-understood, and in \S\ref{sec:centralizers} we study $Z_G(N_\sigma)$ and show that $Z_G(N_\sigma)$ is \emph{flat} over $\O$ and hence smooth.  Lemma~\ref{lem:flatnesscriterion} gives a criterion for flatness that is easy to verify for classical groups, which suffices for our applications.  We can reduce checking $\O$-flatness to the problem of finding elements $g \in Z_G(N_\sigma)(\O)$ such that $g_k$ lies in any specified component of $Z_{G_k}(\Nbar) / Z_{G_k}(\Nbar)^\circ$.  There are difficulties beyond the classical cases due to the varied structure of $\pi_0( Z_G(\Nbar)_{\kbar})$ in general.  

\begin{remark}
It is a fortuitous coincidence (for \cite{cht08}) that for $\GL_n$ the lifts minimally ramified in the preceding sense are exactly the lifts ramified with respect to a parabolic subgroup of $G$.  This rests on the fact that all nilpotent orbits of $\GL_n$ are Richardson orbits (see \S\ref{sec:parabolicdefcondition} for details).
\end{remark}

\subsection{Structure of the Paper}
Section \ref{sec:deformations} discusses deformation conditions, deformation rings, and lifting rings.  Section \ref{sec:nilprep} constructs integral representatives for nilpotent orbits, and defines the notion of a pure nilpotent lift.  This notion requires a study of the $\O$-smoothness of $Z_G(N)$, which is carried out in \S\ref{sec:centralizers}.  Finally Sections \ref{sec:mrtame} and \ref{sec:mr} define and study the minimally ramified deformation condition, first in a special tamely ramified case and then in general.

\subsection{Notation and Assumptions} \label{sec:notation}

Throughout the paper, $\ell$ and $p$ will be distinct primes, and $\O$ will be a discrete valuation ring with residue field $k$ of characteristic $p$.  We will ultimately work with orthogonal or symplectic (similitude) groups, or $\GL_n$ over $\O$, although the strategy of the argument (but not the details) would work in greater generality.  Since reductive group schemes have connected fibers (a restriction going back to \cite[XIX, 2.7]{sga3} to avoid the component group jumping across fibers), and since $\GO_m$ may be disconnected, the natural class of group schemes to work with are what we call \emph{almost-reductive groups}.  
 By this we mean a smooth separated group scheme over $\O$ such that the identity components of the fibers are reductive.  Then $G^\circ$ is a reductive $\O$-subgroup scheme of $G$ and $G/G^\circ$ is a separated \'{e}tale $\O$-group scheme of finite presentation~\cite[Proposition 3.1.3 and Theorem 5.3.5]{conrad14}.   Furthermore, by a result of Raynaud $G$ is affine as it is a flat, separated, and of finite type with affine generic fiber over the discrete valuation ring $\O$ \cite[Proposition 3.1]{py06}.  

We will often assume that $p$ is very good for $G$: in cases of interest this means that $p\neq 2$ if $G$ is orthogonal or symplectic, and $p \nmid n$ when $G = \GL_n$.  To make uniform statements, we say that characteristic zero is very good for any $G$. 

We will also work with a nilpotent $\Nbar \in \g_k$ associated to a continuous representation $\rhobar : \Gamma_L \to G(k)$, where $L$ is an $\ell$-adic field.  We will eventually impose additional hypotheses, including:
\begin{enumerate}[label={(A\arabic*)}]
 \item \label{assumption1}$G$ is $\GSp_n$, $\GO_n$ or $\GL_n$ and $p$ is very good for $G$;
 \item $p> n$;
 \item $k$ and $\O$ are large enough so that $q$, the size of the residue field of $L$, is a square in $\O^\times$, and $\O^\times$ includes square roots of $-1$ and $2$;
 \item \label{assumption4} $k$ and $\O$ are large enough so there exists a pure nilpotent $N_\sigma \in \g$ lifting $\Nbar$ for which $Z_G(N_\sigma)$ is smooth. 
 \end{enumerate}

\subsection{Acknowledgements}
This work forms part of my thesis \cite{booher16}, and 
I am extremely grateful for the generosity and support of my advisor Brian Conrad, and for his extensive and helpful comments on drafts of my thesis.  The thesis was originally submitted as a single paper before being split into this paper and \cite{boohergeometric}: I thank the original referee for a careful reading.  

\section{Deformations of Galois Representations} \label{sec:deformations}

We recall some facts about the deformation theory for Galois representations: a basic reference is \cite{mazur95}, with the extension to algebraic groups beyond $\GL_n$ discussed in \cite{tilouine96}.  While we are mainly concerned with classical groups, there are no problems with doing so for any smooth group scheme $G$ over a discrete valuation ring $\O$ with residue field $k$ of characteristic $p$.  

Let $\Gamma$ be a pro-finite group satisfying the following finiteness property: for every open subgroup $\Gamma_0 \subset \Gamma$, there are only finitely many continuous homomorphisms from $\Gamma_0$ to $\ZZ/ p \ZZ$.  This is true for the absolute Galois group of a local field and for the Galois group of the maximal extension of a number field unramified outside a finite set of places.

Let $\Chat_\O$ be the category of coefficient $\O$-algebras: complete local Noetherian rings with residue field $k$, with morphisms local homomorphisms inducing the identity map on $k$ and with the structure morphism a map of coefficient rings.  Let $\C_\O$ denote the full subcategory of Artinian coefficient $\O$-algebras.  Recall that a \emph{small} surjection of coefficient $\O$-algebras $f : A_1 \to A_0$ is a surjection such that $\ker(f) \cdot \m_{A_1} = 0$.

For $A \in \Chat_\O$, define 
\[
 \Ghat(A) := \ker( G(A) \to G(k))
\]
We are interested in deforming a fixed $\rhobar : \Gamma \to G(k)$.  Let $\g = \Lie G$.

\begin{itemize}
 \item   Let $f: A_1 \to A_0$ be a morphism in $\Chat_{\O}$ and $\rho_0 : \Gamma \to G(A_0)$ a continuous homomorphism.  A \emph{lift} of $\rho_0$ to $A_1$ is a continuous homomorphism $\rho_1 : \Gamma \to G(A_1)$ such that the following diagram commutes:
\[
\xymatrix{
\Gamma \ar[r]^{\rho_1} \ar[rd]^{\rho_0} & G(A_1) \ar[d]^f\\
& G(A_0)
} 
\]
Define the functor $D_{\rhobar,\O}^{\square}: \Chat_\O \to \Sets$ by sending a coefficient $\O$-algebra $A$ to the set of lifts of $\rhobar$ to $A$.  

\item  With the notation above, two lifts $\rho$ and $\rho'$ of $\rhobar$ to $A_1 \in \C_\O$ are \emph{strictly equivalent} if they are conjugate by an element of $\Ghat(A_1)$.  A \emph{deformation} of $\rho_0$ to $A_1$ is a strict equivalence class of lifts.  Define the functor $D_{\rhobar,\O}: \Chat_\O \to \Sets$ by sending a coefficient $\O$-algebra $A$ to the set of deformations of $\rhobar$ to $A$.  
\end{itemize}

We will drop the subscript $\O$ when it is clear from context.

\begin{fact} \label{fact:representable}
The functor $D^\square_{\rhobar,\O}$ is representable.  When $\g_k^\Gamma = \Lie(Z_G)_k$, the functor $D_{\rhobar, \O}$ is representable.  
\end{fact}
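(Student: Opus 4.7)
I would apply Schlessinger's representability criteria to both functors on the subcategory $\C_\O$ of Artinian coefficient rings; (pro-)representability on $\Chat_\O$ then follows by passage to the projective limit, since a continuous homomorphism $\Gamma \to G(A)$ for $A$ complete local Noetherian is determined by its reductions modulo powers of the maximal ideal. The finiteness hypothesis imposed on $\Gamma$ ensures finiteness of the continuous cohomology of any finite $p$-torsion $\Gamma$-module, which is all that is needed for the tangent-space axiom.

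For $D^\square_{\rhobar,\O}$, the fiber-product axioms follow from the functorial identity $G(A_1 \times_{A_0} A_2) = G(A_1) \times_{G(A_0)} G(A_2)$, valid since $G$ is affine: a lift to a fiber product is precisely a compatible pair of lifts. The tangent space $D^\square_{\rhobar,\O}(k[\epsilon])$ is identified via $\rho(\gamma) = (1 + \epsilon c(\gamma))\rhobar(\gamma)$ with the continuous $1$-cocycles $Z^1_{\on{cont}}(\Gamma, \adrho)$, which is finite-dimensional over $k$ by the hypothesis on $\Gamma$. The Schlessinger axiom on automorphisms is automatic for a framed functor, as no quotient by strict equivalence occurs. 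Hence $D^\square_{\rhobar,\O}$ is pro-representable.

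For $D_{\rhobar,\O}$ the tangent-space axiom is verified identically, now with $H^1_{\on{cont}}(\Gamma, \adrho)$ in place of $Z^1_{\on{cont}}(\Gamma, \adrho)$. The substantive step is bijectivity of the fiber-product map
\[
D_{\rhobar,\O}(A_1 \times_{A_0} A_2) \to D_{\rhobar,\O}(A_1) \times_{D_{\rhobar,\O}(A_0)} D_{\rhobar,\O}(A_2)
\]
whenever one of the two maps to $A_0$ is a small surjection. Surjectivity combines the framed case with smoothness of $G$: any strict equivalence over $A_0$ between the two restrictions lifts along the small surjection, so the two deformations can be represented by lifts whose $A_0$-reductions agree on the nose, and then glued. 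Injectivity is where the centralizer hypothesis enters. If conjugating elements $g_1 \in \Ghat(A_1)$ and $g_2 \in \Ghat(A_2)$ witness the strict equivalences of two lifts $\rho, \rho'$ of $\rhobar$ to $A_1 \times_{A_0} A_2$, their restrictions to $A_0$ differ by an element of the stabilizer of the common reduction $\rho_0$, and realigning them to glue requires lifting such a stabilizer element along the small surjection. The obstruction is controlled by the tangent space of the stabilizer functor $A \mapsto \{h \in \Ghat(A) : h \rho h^{-1} = \rho\}$ modulo the always-trivially-acting subgroup $Z_G \cap \Ghat$, which equals $\g_k^\Gamma / \Lie(Z_G)_k$ and vanishes by hypothesis. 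The main obstacle is thus precisely this injectivity step; the centralizer condition is the tangent-space input that renders the effective automorphism functor of $\rhobar$ trivial and thereby makes the fiber-product map a bijection.
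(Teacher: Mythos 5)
The paper offers no argument beyond "the first part is simple, the second is a reformulation of Tilouine's Theorem 3.3," so there is no in-text proof to compare against; your Schlessinger-criterion argument, with the centralizer hypothesis killing the effective automorphism functor, is the standard route and is exactly what the cited reference carries out. One step left implicit in your gluing argument is worth flagging: after the induction that the hypothesis $\g_k^\Gamma = \Lie(Z_G)_k$ forces the stabilizer of any lift to be exactly $\widehat{Z_G}(A)$, the mismatch $h \in \widehat{Z_G}(A_0)$ must still be lifted along the small surjection, which uses $\O$-smoothness of $Z_G$ — automatic for $\GL_n$ and the classical similitude groups considered in the paper, but it should be named as a hypothesis if one wants the Fact in the generality of an arbitrary smooth $G$.
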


The first part is simple, the second is a reformulation of \cite[Theorem 3.3]{tilouine96}.

The representing objects are denoted $R^\square_{\rhobar, \O}$ and (when it exists) $R_{\rhobar, \O}$.  The former is called the universal lifting ring, while the latter is the universal deformation ring.  While we usually care about deformations, it is technically easier to work with lifts.

This deformation theory is controlled by Galois cohomology.  Let $\adrho$ denote the representation of $\Gamma$ on $\g_k$ via the adjoint representation.  Letting $G'$ be the derived subgroup of $G^\circ$ with Lie algebra $\g'$, we also consider the representation $\adzerorho$ of $\Gamma$ on $\g'_k$.  As $p$ is very good, we have $\g_k = \g'_k \oplus \zfrak_\g$ where $\zfrak_\g$ is the Lie algebra of $Z_G$.  The condition in Fact~\ref{fact:representable} is just that $H^0(\Gamma,\adrho) = \zfrak_\g$, or equivalently that  $H^0(\Gamma,\adzerorho)=0$.  In general, since $p$ is very good the natural map $H^i(\Gamma,\adzerorho) \to H^i(\Gamma,\adrho)$ is injective for all $i$; we often use this without comment.

We can use the first order exponential map \cite[\S 3.5]{tilouine96} to understand the tangent space.  Recall that for a smooth $\O$-group scheme $G$, and a small surjection $f : A \to A/I$ of coefficient rings ($I \cdot \m_{A} = 0$), smoothness gives an isomorphism
\[
  \exp: \g \tensor{k} I \simeq \ker( G(A) \to G(A/I)) = \ker( \Ghat(A) \to \Ghat(A/I)).
\]
The tangent space $D_{\rhobar,\O}(k[\epsilon]/\epsilon^2)$ is identified with $H^1(\Gamma, \adrho)$:  Under this isomorphism, the cohomology class of a $1$-cocycle $\tau$ corresponds to the lift $\rho(g) =  \exp(\epsilon \tau(g)) \rhobar(g)$.
For the universal lifting ring $R^\square_{\rhobar,\O}$, the tangent space is identified with the $k$-vector space $Z^1(\Gamma,\adrho)$ of (continuous) $1$-cocycles of $\Gamma$ valued in $\adrho$.

\begin{remark} \label{rem:framedvdef}
   We also observe that
 \[
  \dim_k Z^1(\Gamma,\adrho) - \dim_k H^1(\Gamma, \adrho) = \dim_k B^1(\Gamma,\adrho) = \dim_k \g - \dim_k H^0(\Gamma,\adrho)
 \]
since the space of coboundaries admits a surjection from $\adrho$ with kernel $\adrho^\Gamma$.  This will be useful when comparing dimensions of lifting rings and deformation rings that are smooth.
\end{remark}

We will want to studying special classes of deformations. 

\begin{defn} \label{defn:deformationcondition}
A \emph{lifting condition} is a sub-functor $\D^\square \subset D^\square_{\rhobar, \O} : \C_\O \to \Sets$ such that:
\begin{enumerate}
 \item  For any coefficient ring $A$, $\D^\square(A)$ is closed under strict equivalence.
  \item  \label{defcon2} Given a Cartesian diagram in $\C_\O$
 \[
  \xymatrix{
  A_1 \times_{A_0} A_2 \ar[r]^-{\pi_2} \ar[d]^-{\pi_1} & A_2\ar[d]\\
  A_1 \ar[r] & A_0
  }
 \]
 and $\rho \in D^\square_{\rhobar, \O}(A_1 \times_{A_0} A_2)$, we have $\rho \in \D^\square(A_1 \times_{A_0} A_2)$ if and only if $\D^\square(\pi_1) \circ \rho \in \D^\square(A_1)$ and $\D^\square(\pi_2) \circ \rho \in \D^\square(A_2)$.  
\end{enumerate}
As it is closed under strict equivalence, we naturally obtain a \emph{deformation condition}, a sub-functor $\D \subset D_{\rhobar,\O}$.
\end{defn}

By Schlessinger's criterion \cite[Theorem 2.11]{schlessinger68} being a lifting condition is equivalent to the functor $\D^\square$ being pro-representable.  Likewise, the deformation condition $\D$ associated to a lifting condition $\D^\square$ is pro-representable provided that $D_{\rhobar, \O}$ is.

The tangent space of a deformation condition $\D$ is a $k$-subspace of $H^1(\Gamma,\adrho)$, and will be denoted by $H^1_{\D}(\Gamma,\adrho)$.  For a small surjection $A_1 \to A_0$ and $\rho \in \D(A_0)$, the set of deformations of $\rho$ to $A_1$ subject to $\D$ is a 
$H^1_{\D}(\Gamma,\adrho)$-torsor.  This torsor-structure is compatible with the action of the unrestricted tangent space to $D_\rhobar$ on the space of all deformations of $\rho$ to $A_1$.

\begin{example} \label{ex:liftingcondition}
Suppose $G$ is almost-reductive, and let $G'$ be the derived group of $G^\circ$.
 The most basic examples of deformation conditions are the conditions imposed by fixing the lift of the homomorphism $\Gamma \to (G/G')(k)$.  To be precise, for the quotient map $\mu : G \to G/G' =: S$, a fixed $\nu : \Gamma \to S(\O)$ lifting $\mu \circ \rhobar$, and $A \in \Chat_\O$ with structure morphism $\imath : \O \to A$, we define a deformation condition $\D_\nu \subset \D_\rhobar$ by
 \[
  \D_\nu(A) = \{ \rho \in \D_\rhobar(A) |  \Gamma \to G(A) : \mu_A \circ \rho = \imath \circ \nu_A \}.
 \]
One checks this is a deformation condition.  Its tangent space is $H^1(\Gamma,\adzerorho)$ since $p$ is very good.  We define $\D_\nu^\square$ similarly.  

Another important example is the \emph{unramified} deformation condition for a non-archimedean place $v$ where $\rhobar$ is unramified: this consists of lifts that are unramified (possibly with a specified choice of $\nu$).  The tangent space is $H^1_{\nr}(\Gamma_v,\adrho)$ (respectively $H^1_{\nr}(\Gamma_v,\adzerorho)$).
\end{example}

\begin{defn} \label{defn:liftable}  A deformation condition $\D$ is \emph{liftable} (over $\O$) if for all small surjections $f : A_1 \to A_0$ of coefficient $\O$-algebras the natural map
\[
 \D(f) : \D(A_1) \to \D(A_0)
\]
is surjective.
\end{defn}

A geometric way to check local liftability is to show that the corresponding deformation ring (when it exists) is smooth.  Obviously it suffices to check liftability for lifts instead of deformations, so we can work with the lifting deformation ring and avoid representability issues for $\D_\rhobar$.

\begin{example} \label{ex:liftability}
The unramified deformation condition is liftable: an unramified lift is completely determined by the image of Frobenius in $G(A_0)$, and $G$ is smooth over $\O$.

When attempting to lift with a fixed lift $\nu$ of $\Gamma \to (G/G')(k)$, the obstruction to lifting is measured by a $2$-cocycle $\ob(\rho_0)$ that lies in $H^2(\Gamma,\adzerorho)$.  To see this, recall that the obstruction cocycle is defined by picking a continuous set theoretic lift $\rho_1$ of a given $\rho_0 : \Gamma_K \to G(A_0)$: the $2$-cocycle records the failure of $\rho_1$ to be a homomorphism.  By choosing the lift $\Gamma_K \to G(A_1)$ so that $\Gamma_K \to (G/G')(A_0)$ agrees with $\nu$ (as we may easily do since $\ker \rho_0$ is open in $\Gamma_K$), the obstruction cocycle takes values in $\adzerorho$.
\end{example}

\section{Representatives for Nilpotent Orbits and Pure Nilpotents} \label{sec:nilprep}

As a first step on the road to defining the minimally ramified deformation condition, we study integral representatives of nilpotent orbits and then define pure nilpotent lifts.  Useful background about nilpotent orbits is collected in \cite{jantzen04}. 
 We focus on classical groups, so consider $G = \GL_n$, $G= \Sp_m$, or $G = \Orth_m$ over a discrete valuation ring $\O$ with residue field $k$ of characteristic $p>0$.  Assume $p$ is very good for $G_k$.  Let $\g = \Lie G$ and $K$ be the field of fractions of $\O$.
 
\subsection{Integral Representatives} \label{sec:integralrep} The nilpotent orbits for $G$ over an algebraically closed field of good characteristic can be classified by combinatorial data $\C$ that is independent of the characteristic.  For classical groups, nilpotent orbits can be classified by their Jordan canonical form in terms of partitions .  For a partition $\sigma \in \C$, let $\O_{F,\sigma} \subset \g_F$ denote the corresponding orbit over the algebraically closed field $F$.  
 For $\sigma \in \C$, we seek elements
\begin{equation} \label{eq:ncondition}
 N_\sigma \in \g \,\text{ such that } \, (N_\sigma)_k \in O_{\kbar,\sigma} \, \text{ and } \, (N_\sigma)_K \in O_{\overline{K},\sigma}.  
\end{equation}
This makes precise the statement that $N_K$ and $N_k$ ``lie in the same nilpotent orbit.''

\begin{remark}\label{rmk:rootreps}
For a general reductive group scheme $G$, the Bala-Carter classification can be interpreted as giving a characteristic-free classification of nilpotent orbits, allowing a generalization of the condition in \eqref{eq:ncondition}.  One can obtain such $N_\sigma$ in terms of root data following \cite[III.4.29]{ss70}.  We need the additional information provided by the concrete description in the symplectic and orthogonal cases to analyze the centralizer $Z_G(N)$ as an $\O$-scheme, so do not use this.  
\end{remark}


\begin{example} \label{ex:glnrep}
Nilpotent orbits for $\GL_n$ correspond to partitions $n = n_1 + n_2 + \ldots + n_r$.  For a partition $\sigma$ of $n$, Let $N_\sigma \in \g$ be the nilpotent matrix in Jordan canonical form whose blocks (in order) are of sizes $n_1, n_2, \ldots , n_r$.  Clearly $N_\sigma$ has entries in $\O$ and satisfies \eqref{eq:ncondition}.
\end{example}

For symplectic and orthogonal groups, we can produce the desired $N_\sigma$ using a minor extension of the classical results known over algebraically closed fields \cite[\S1]{jantzen04}.
Let $G = \Sp_{m}$ with $m = 2n$, or $G=\Orth_m$ with $m = 2n$ or $m = 2n+1$.  We assume $n \geq 2$.
Recall that $\Sp_{m}$ and $\Orth_m$ are defined using standard pairings on a free $\O$-module $M$ of rank $m$.  For $m = 2n$, the \emph{standard alternating pairing} $\varphi_{\on{std}}$ on $\O^m$ is the one given by the block matrix
\[
 \blockmatrix{0}{I'_{n}}{-I'_{n}}{0},
\]
where $I'_{n}$ denotes the anti-diagonal matrix with $1$'s on the diagonal.
The \emph{standard symmetric pairing} $\varphi_{\on{std}}$ on $\O^m$ is the one given by the matrix $I'_m$.

\begin{remark} \label{rmk:somorbits}
 We chose to work with $\Orth_m$ instead of $\SO_m$, as the classification is cleaner for $\Orth_m$.  The nilpotent orbits are almost the same for $\SO_m$, except that certain nilpotent orbits of $\Orth_m$ (the ones where the partition contains only even parts) split into two $\SO_m$-orbits \cite[Proposition 1.12]{jantzen04} (conjugation by an element of $\Orth_m$ with determinant $-1$ carries one such orbit into the other).
\end{remark}

\begin{defn}
Let $\sigma$ denote a partition $m = m_1 + m_2 + \ldots +m_r$ of $m$.  It is \emph{admissible} if
\begin{itemize}
 \item every even $m_i$ appears an even number of times when $G =\Orth_m$ ;
 \item every odd $m_i$ appears an even number of times when $G = \Sp_m$.
\end{itemize}
\end{defn}

The admissible partitions of $m$ are in bijection with nilpotent orbits of $\Sp_m$ or $\Orth_m$  over any algebraically closed field of good characteristic \cite[Theorem 1.6]{jantzen04}.  The corresponding orbit is the intersection of $\g \subset \gl_m$ with the $\GL_m$-orbit corresponding to that partition of $m$.  Note that $\GL_m$-orbit representatives in Jordan canonical form need not lie in $\g$.  

We will construct nilpotents together with a pairing, and then show how to relate the constructed pairing to the standard pairings used to define $G$.  Let $\epsilon = 1$ in the case of $\Orth_m$, and $\epsilon = -1$ in the case of $\Sp_m$.

\begin{defn} \label{defn:nilpotentpieces}
Let $d\geq 2$ be an integer.  Define $M(d) = \O^d$, with basis $v_1,\ldots v_d$ and a perfect pairing $\varphi_d$ such that
\[
 \varphi_d(v_i,v_j) = \begin{cases}
                     (-1)^i, & i+j = d+1\\
                     0, & \text{otherwise}
                    \end{cases}
\]
(alternating for even $d$, symmetric for odd $d$).
Define a nilpotent $N_d \in \End(M(d))$ by $N_d v_i = v_{i-1}$ for $1 < i \leq d$ and $N_d v_1 =0$.

Similarly, define $M(d,d) = \O^{2d}$ with basis $v_1,\ldots v_d, v_1',\ldots, v'_d$ and a perfect $\epsilon$-symmetric pairing $\varphi_{d,d}$ by extending
\[
 \varphi_{d,d}(v_i,v_j)= \varphi_{d,d}(v'_i,v'_j)=0 \quad \text{and} \quad 
 \varphi_{d,d}(v_i,v_j') = \begin{cases}
    (-1)^i, & i+j = d+1 \\
    0, & \text{otherwise}
    \end{cases}
\]
Define a nilpotent $N_{d,d} \in \End(M(d,d))$ by $N_{d,d} v_i = v_{i-1}$ and $N_{d,d} v'_i = v'_{i-1}$ for $1 < i \leq d$, and $N_{d,d} v_1 = N_{d,d} v'_1 = 0$.  
\end{defn}

Note that the pairing $\varphi_{d,d}$ can be symmetric or alternating depending on the parity of $d$.
It is straightforward to verify the pairings are perfect and that $N_d$ (respectively $N_{d,d}$) is skew with respect to $\varphi_{d}$ (respectively $\varphi{d,d}$) in the sense that for $v,w \in M(d)$ we have
\[
 \varphi_d( N _d v, w) = -\varphi_d(v,N_d w).
\]

Given an admissible partition $\sigma : m = m_1 + m_2 + \ldots +m_r$, we will construct a free $\O$-module of rank $m$ with an $\epsilon$-symmetric perfect pairing and a nilpotent endomorphism that is skew with respect to the pairing such that the Jordan block structure of nilpotent endomorphism in geometric fibers is given by $\sigma$.  Let $n_i(\sigma) = \# \{ j : m_j = i \}$.  
\begin{itemize}
 \item If $G = \Orth_m$ then $n_i(\sigma)$ is even for even $i$, so we can define \[
   M_\sigma = \bigoplus_{i \, \text{odd}} M(i)^{\oplus n_i(\sigma)} \oplus \bigoplus_{i \, \text{even}} M(i,i) ^{\oplus n_i(\sigma)/2}.                             
                                \]
\item  If $G = \Sp_m$ then $n_i(\sigma)$ is even for odd $i$, so we can define
\[
  M_\sigma = \bigoplus_{i \, \text{odd}} M(i,i) ^{\oplus n_i(\sigma)/2} \oplus \bigoplus_{i \, \text{even}} M(i)^{\oplus n_i(\sigma)} .    
\]
\end{itemize}
Let $\varphi_\sigma$ and $N_\sigma$ denote the pairing and nilpotent endomorphism defined by the pairing and nilpotent endomorphism on each piece using Definition~\ref{defn:nilpotentpieces}.  In all cases, $M_\sigma$ is a free $\O$-module of rank $m$.  For each $\sigma,$ let $G_\sigma$ be the automorphism scheme $\underline{\Aut}(M_\sigma,\varphi_\sigma)$, so for an algebraically closed field $F$ over $\O$ we have an isomorphism $(G_\sigma)_F \simeq G_F$ well-defined up to $G(F)$-conjugation by using $F$-linear isomorphisms $(M_\sigma,\varphi_\sigma)_F \simeq (F^m,\varphi_{\on{std}})$.  

\begin{lem} \label{lem:geometricpts}
For all admissible partitions of $m$, the specializations of the $N_\sigma$ at geometric points $\xi$ of $\Spec \O$ constitute a set of representatives for the nilpotent orbits of $G_\xi$, and the specializations lie in the orbit corresponding to $\sigma$.
\end{lem}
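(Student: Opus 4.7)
The strategy is to reduce everything to the already-established bijection (Jantzen, \cite[Theorem 1.6]{jantzen04}) between admissible partitions of $m$ and nilpotent $G$-orbits on $\g$ over any algebraically closed field of good characteristic. Once one knows that the specialization $(N_\sigma)_\xi$, viewed inside $\g_\xi$ after choosing an identification $(G_\sigma)_\xi \simeq G_\xi$, has Jordan type $\sigma$, the lemma will follow from that bijection: different admissible partitions give different Jordan types and hence different $G_\xi$-orbits, and every orbit is exhausted as $\sigma$ varies.

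First I would verify the identification $(G_\sigma)_\xi \simeq G_\xi$. The pairings $\varphi_d$ and $\varphi_{d,d}$ on the standard pieces $M(d)$ and $M(d,d)$ are by construction perfect; moreover $\varphi_d$ is $(-1)^{d+1}$-symmetric (alternating for even $d$, symmetric for odd $d$) and $\varphi_{d,d}$ is $\epsilon$-symmetric of the required type. Summing, $\varphi_\sigma$ is a perfect $\epsilon$-symmetric pairing on the rank $m$ free $\O$-module $M_\sigma$ (the admissibility hypothesis on $\sigma$ is precisely what guarantees one is not forced into the wrong parity in one of the summands). Over an algebraically closed field $F$ over $\O$ (with $\chr F \neq 2$ in the orthogonal case, which is forced by $p$ being very good), any two non-degenerate $\epsilon$-symmetric forms on $F^m$ of the given type are isomorphic, so choosing an $F$-linear isomorphism $(M_\sigma,\varphi_\sigma)_F \simeq (F^m,\varphi_{\on{std}})$ gives an isomorphism $(G_\sigma)_F \simeq G_F$, well-defined up to $G(F)$-conjugation. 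Under this isomorphism $(N_\sigma)_F$, being skew with respect to $\varphi_\sigma$ by construction, maps into $\g_F$, and the resulting $G(F)$-conjugacy class is well-defined.

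Next I would compute the Jordan type. On each standard piece, the basis vectors $v_i$ (and $v'_i$) form an $\O$-basis that remains a basis in every geometric fiber, and the action of $N_d$ (respectively $N_{d,d}$) given in Definition~\ref{defn:nilpotentpieces} is visibly a single Jordan block of size $d$ (respectively two Jordan blocks of size $d$). Because the formation of $M_\sigma$ is a direct sum of such pieces, and $N_\sigma$ preserves each summand, the Jordan decomposition of $(N_\sigma)_\xi$ is simply the concatenation of these block sizes, which reproduces the partition $\sigma$ exactly. In particular, the Jordan type is the same in every geometric fiber $\xi$ of $\Spec \O$.

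Finally, I would conclude as follows. Since $(N_\sigma)_\xi \in \g_\xi$ has Jordan type $\sigma$ as an element of $\gl_m$, it lies in the intersection of $\g_\xi$ with the $\GL_m$-orbit corresponding to $\sigma$; by Jantzen's description (\cite[\S 1]{jantzen04}, cited here as \cite[Theorem 1.6]{jantzen04}) this intersection is a single $G_\xi$-orbit, namely the one associated to the admissible partition $\sigma$. Distinct $\sigma$ yield distinct Jordan types and hence distinct $G_\xi$-orbits, and as $\sigma$ ranges over all admissible partitions of $m$ we cover all nilpotent $G_\xi$-orbits by the same theorem. This proves both claims of the lemma. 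The only nontrivial point in this plan is the identification $(G_\sigma)_\xi \simeq G_\xi$, which requires uniqueness of the relevant non-degenerate form over an algebraically closed field (hence the hypothesis that $p$ is very good, which in the orthogonal case excludes characteristic $2$); the Jordan-type computation and the appeal to the classification are then purely formal.
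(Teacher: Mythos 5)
Your proposal is correct and takes essentially the same approach as the paper. The paper's proof is a two-sentence appeal to Jantzen's bijection between admissible partitions and nilpotent orbits (\cite[Theorem 1.6]{jantzen04}) together with the observation that the constructed $N_\sigma$ are integral versions of the orbit representatives from \cite[\S 1.7]{jantzen04}; you unfold exactly the verifications that sit behind that citation --- that $\varphi_\sigma$ is a perfect $\epsilon$-symmetric form of the right type so that $(G_\sigma)_\xi \simeq G_\xi$ up to conjugacy, that the Jordan type of $(N_\sigma)_\xi$ is $\sigma$ in every geometric fiber, and that admissibility then pins down the orbit via Jantzen's classification.
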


\begin{proof}
The set of admissible partitions of $m$ is in bijection with the set of nilpotent orbits over any algebraically closed field.  The $N_\sigma$ we constructed are integral versions of the representatives constructed in \cite[\S1.7]{jantzen04}.
\end{proof}

Let $e_1,e_2, \ldots e_{m}$ be the standard basis for $\O^m$.  The elements $e_i$ and $e_{m+1-i}$ pair non-trivially under the standard pairing.  When $m = 2n+1$, $e_{n+1}$ pairs non-trivially with itself under the standard pairing.  We now relate the standard pairings to the pairings $\varphi_{\sigma}$.

\begin{prop} \label{prop:symorthreps}
Suppose that $\sqrt{-1} , \sqrt{2} \in \O^\times$.  Then $\varphi_{\sigma}$ is equivalent to the standard pairing over $\O$.  There exists an $\O$-basis $\{v_i\}$ of $\O^m$ with respect to which the pairing is given by $\varphi_\sigma$ and $N_\sigma$ satisfies the condition in \eqref{eq:ncondition} for $G = \Sp_m$ or $G = \Orth_m$.
\end{prop}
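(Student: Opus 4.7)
My plan has two parts: first establish that $(M_\sigma, \varphi_\sigma)$ and $(\O^m, \varphi_{\on{std}})$ are isometric as pairing modules over $\O$, then transport $N_\sigma$ along the isometry. Given such an $\O$-linear isometry $\psi$, the desired basis $\{v_i\}$ of $\O^m$ is simply the image under $\psi$ of the distinguished basis of $M_\sigma$ from Definition~\ref{defn:nilpotentpieces}; the endomorphism $\psi \circ N_\sigma \circ \psi^{-1}$ is skew with respect to $\varphi_{\on{std}}$ (since $N_\sigma$ was skew with respect to $\varphi_\sigma$), hence lies in $\g = \Lie G$, and \eqref{eq:ncondition} is immediate from Lemma~\ref{lem:geometricpts} combined with the induced $\O$-group identification $G_\sigma \simeq G$.

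The symplectic case is painless: both pairings are perfect alternating pairings on free $\O$-modules of rank $m = 2n$, and any such pairing is equivalent to the standard symplectic form by the usual inductive construction of a symplectic basis, with no hypothesis on $\O^\times$ required.

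The orthogonal case carries the real content, and I would proceed blockwise, reducing to two explicit sub-claims. First, for even $d$, I claim $(M(d,d), \varphi_{d,d}) \simeq \mathbb{H}^d$ over $\O$ with $\mathbb{H} = \ttm{0}{1}{1}{0}$, achieved by pairing $v_a$ with $v'_{d+1-a}$ for $a = 1, \ldots, d$ and rescaling $v'_{d+1-a}$ by $(-1)^a$ to absorb signs while preserving isotropy of the two halves. Second, for odd $d = 2k+1$, I claim $(M(d), \varphi_d) \simeq \mathbb{H}^{k} \oplus \langle (-1)^{k+1} \rangle$, by the same rescaling on off-center pairs $(v_a, v_{d+1-a})$ and leaving $v_{k+1}$ alone; since $\sqrt{-1} \in \O^\times$, the rank-one piece $\langle (-1)^{k+1} \rangle$ is isomorphic to $\langle 1 \rangle$ (rescale by $\sqrt{-1}$ if the sign is negative). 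Summing over all building blocks of $M_\sigma$ yields $(M_\sigma, \varphi_\sigma) \simeq \mathbb{H}^A \oplus \langle 1 \rangle^B$, where $B = \sum_{i \text{ odd}} n_i(\sigma)$; since $\sum_i i\, n_i(\sigma) = m$ and the even-$i$ contributions to this sum are even, $B \equiv m \pmod 2$. The final input is $\langle 1, 1 \rangle \simeq \mathbb{H}$ over $\O$, verified explicitly via the basis $f_1 = (e_1 + \sqrt{-1}\, e_2)/\sqrt{2}$, $f_2 = (e_1 - \sqrt{-1}\, e_2)/\sqrt{2}$, which needs exactly $\sqrt{-1}, \sqrt{2} \in \O^\times$. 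Collapsing pairs of $\langle 1 \rangle$'s into $\mathbb{H}$'s then gives $\mathbb{H}^{m/2}$ when $m$ is even and $\mathbb{H}^{(m-1)/2} \oplus \langle 1 \rangle$ when $m$ is odd, matching $(\O^m, \varphi_{\on{std}})$ under its obvious decomposition using the pairs $e_a \leftrightarrow e_{m+1-a}$ (and the singleton $e_{n+1}$ when $m = 2n+1$).

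The main obstacle is therefore the combinatorial book-keeping in the orthogonal case, and within it the only non-trivial technical inputs are the identifications $\langle -1 \rangle \simeq \langle 1 \rangle$ and $\langle 1, 1 \rangle \simeq \mathbb{H}$ over $\O$; these are precisely the reason the hypotheses $\sqrt{-1}, \sqrt{2} \in \O^\times$ appear in the statement.
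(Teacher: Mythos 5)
Your proposal is correct and takes essentially the same route as the paper: the entire content is merging the self-paired basis vectors (one per odd part in the orthogonal case) into hyperbolic pairs, and your change of basis $f_{1,2} = (e_1 \pm \sqrt{-1}\,e_2)/\sqrt{2}$ is exactly the paper's $w, w'$ construction exploiting $\sqrt{-1}, \sqrt{2} \in \O^\times$, with the symplectic case dispatched by reordering/rescaling in both treatments. The differences are only organizational --- you normalize signs blockwise into $\mathbb{H}$'s and $\langle 1 \rangle$'s and make the parity count $B \equiv m \pmod 2$ explicit, whereas the paper runs a direct induction combining two self-paired vectors at a time --- and your final step (transporting $N_\sigma$ along the isometry and invoking Lemma~\ref{lem:geometricpts}) matches the paper's conclusion.
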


\begin{proof}
The standard pairings are very similar to $\varphi_\sigma$.  In the case of $\Sp_{m}$, each basis vector pairs trivially against all but one other basis vector, with which it pairs as $\pm 1$.  So after reordering the basis, $\varphi_\sigma$ is the standard pairing.  The case of $\Orth_m$ is slightly more complicated.  Let $\sigma : m = m_1 + m_2 + \ldots + m_r$ be an admissible partition.  The construction of $M_\sigma$ and $\varphi_\sigma$ gives a basis $\{v_{i,j}\}$ where $1 \leq i \leq r$ and $1 \leq j \leq m_i$.  From the construction of $\varphi_\sigma$, we see that $v_{i,j}$ pairs trivially against all basis vectors except for $v_{i,m_i+1-j}$.  So as long as $2j \neq m_i+1 $, we obtain a pair of basis vectors which are orthogonal to all others and which pair to $\pm 1$.  For each odd $m_i$, the vector $v_{i,(m_i+1)/2}$ pairs non-trivially with itself.  The standard pairing with respect to the basis $e_i$ has such a vector only when $m = 2n+1$ and then only for one $e_i$.  

We must change the basis over $\O$ so that $\varphi_\sigma$ becomes the standard symmetric pairing.  Let $v = v_{i,(m_i+1)/2}$ and $v' = v_{j,(m_j+1)/2}$ be two distinct vectors which pair non-trivially with themselves.  In particular, $\varphi_\sigma(v,v) = (-1)^{(m_i+1)/2} := \eta$ and $\varphi_\sigma(v',v') = (-1)^{(m_j+1)/2} := \eta'$.  Define
\[
 w = \frac{\sqrt{ \eta} v - \sqrt{- \eta'} v'}{\sqrt{2}} \quad \text{and} \quad w' = \frac{\sqrt{\eta} v + \sqrt{- \eta'} v'}{\sqrt{2}}.
\]
Then we see that $\varphi_\sigma(w,w)=0 = \varphi_\sigma(w',w')$ and $\varphi_\sigma(w,w') = 1$.  Making this change of variable over $\O$ (which requires $\sqrt{-1}, \sqrt{2} \in \O^\times$), we have reduced the number of basis vectors which pair non-trivially with themselves by two, and produced a new pair of basis vectors orthogonal to the others and which pair to $1$.  By induction, we may therefore pick a basis $v'_1 ,\ldots , v'_m$ for which at most one basis vector pairs non-trivially with itself under $\varphi_\sigma$.  After re-ordering, we may further assume that $\varphi_\sigma(v'_i, v'_j)=0$ unless $i +j = m+1$, in which case $\varphi_\sigma(v'_i,v'_j) = \pm 1$.  Suppose $j = m+1-i$.  If $i \neq j$, by scaling $v'_i$ we may assume that $\varphi_\sigma(v'_i,v'_j) = 1$.  If $i=j$, we already know that $\varphi_\sigma(v'_i,v'_j) =1$.  With respect to this basis, $\varphi_\sigma$ is the standard pairing.

The last statement immediately follows from Lemma~\ref{lem:geometricpts}.
\end{proof}

\subsection{Pure Nilpotent Lifts} \label{sec:purenilplift}
For a nilpotent element $\Nbar \in \g_k$ of type $\sigma \in \C$, we will define the notion of a \emph{pure nilpotent} lift of $\Nbar$ in $\g_k$ and study the space of such lifts, assuming 
there exists $N_\sigma \in \g$ lifting $\Nbar$ such that $(N_\sigma)_{\overline{K}} \in O_{\overline{K},\sigma}$ and such that $Z_G(N_\sigma)$ is smooth over $\O$.  

\begin{remark} \label{rmk:43}
 \S\ref{sec:integralrep} shows that for any nilpotent $\Nbar \in \g_k$, there exists $N'_\sigma \in \g$ such that $(N'_\sigma)_{\overline{k}} \in O_{\kbar,\sigma}$ and such that $(N'_\sigma)_k$ and $\Nbar$ are $G(\kbar)$-conjugate.  We will address the $\O$-smoothness of $Z_G(N_\sigma)$ in \S\ref{sec:centralizers}, especially Proposition~\ref{prop:similitudecases}.  Then $(N'_\sigma)_k$ and $\Nbar$ are conjugate by $\overline{g} \in G(k')$ for some finite extension $k'/k$.  Lift $\overline{g}$ to an element $g \in G(\O')$ for a Henselian discrete valuation ring local over $\O$ and having residue field $k'$.  The element $N_\sigma := g N'_ \sigma g^{-1} \in \g_{\O'}$ reduces to $\Nbar_{k'}$ and has the required properties.  So the above hypothesis is satisfied after a finite flat local extension of $\O$.
\end{remark}

\begin{defn} \label{defn:purenilp}
Fix an $N_\sigma \in \g$ lifting $\Nbar$ such that $(N_\sigma)_{\overline{K}} \in O_{\overline{K},\sigma}$ and such that $Z_G(N_\sigma)$ is smooth over $\O$.
Define the functor $\Nil_{\Nbar} : \C_\O \to \Sets$ by
\[
 \Nil_{\Nbar}(R) = \{ N \in \g_R | \Ad_G(g) (N_\sigma) = N \, \text{for some} \, g \in \Ghat(R) \}.
\]
Call these $N \in \Nil_{\Nbar}(R)$ the \emph{pure nilpotents} lifting $\Nbar$.  
\end{defn}

This is obviously a subfunctor of the formal neighborhood of $\Nbar$ in the affine space $\underline{\g}$ over $\O$ attached to $\g$.  The key to analyzing $\Nil_{\Nbar}$ is that $Z_{G_R}(N)$ is smooth over $R$ since $Z_G(N_\sigma)$ is $\O$-smooth and $N$ is in the $G$-orbit of $(N_\sigma)_R$.  To ease notation below, we shall write $gN g^{-1}$ rather than $\Ad_G(g)(N)$ for $g \in \Ghat(R)$.

\begin{lem}
Assuming $Z_G(N_\sigma)$ is $\O$-smooth, the functor $\Nil_{\Nbar}$ is pro-representable.
\end{lem}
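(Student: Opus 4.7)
The plan is to identify $\Nil_{\Nbar}$ with the formal neighborhood of $N_\sigma$ in the $G$-orbit of $N_\sigma$, and then use smoothness of this orbit as an $\O$-scheme to conclude that the formal neighborhood is pro-represented by a power series ring over $\O$.

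First I would construct the orbit as a scheme. Write $Z := Z_G(N_\sigma)$, which by hypothesis is smooth and closed in the affine $\O$-group $G$. Using general representability theorems for quotients of affine smooth group schemes by smooth closed subgroups over a Dedekind base (Anantharaman; see also \cite[\S7]{conrad14}), the fppf quotient $Y := G/Z$ exists as a smooth quasi-projective $\O$-scheme. The orbit map $\psi: G \to \underline{\g}$, $g \mapsto g N_\sigma g^{-1}$, factors as $G \twoheadrightarrow Y \to \underline{\g}$, and the second arrow is a monomorphism by construction; checking on geometric fibers that it is an immersion (by the classical orbit-stabilizer theorem for algebraic group actions on affine spaces) one concludes that $Y \to \underline{\g}$ is a locally closed immersion. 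In particular, $Y$ is $\O$-smooth and contains $N_\sigma$ as an $\O$-point lifting $\Nbar \in Y(k)$.

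Next I would match $\Nil_{\Nbar}$ with the formal completion $\widehat{Y}_{N_\sigma}$, whose $R$-points (for $R \in \C_\O$) are those $N \in Y(R)$ reducing to $\Nbar$. The inclusion $\Nil_{\Nbar}(R) \subseteq \widehat{Y}_{N_\sigma}(R)$ is immediate: if $N = g N_\sigma g^{-1}$ with $g \in \Ghat(R)$, then $N \in Y(R)$ and $N \bmod \m_R = \Nbar$. For the reverse inclusion, given $N \in \widehat{Y}_{N_\sigma}(R)$, use that $G \to Y$ is a $Z$-torsor with smooth structure group, hence is itself smooth. Lifting step by step along small surjections and using formal smoothness, $G(R) \to Y(R)$ is surjective, so we can choose $g' \in G(R)$ mapping to $N$. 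Its reduction $\bar{g}'_k \in G(k)$ carries $N_\sigma \bmod \m_R$ to $\Nbar = N_\sigma \bmod \m_R$, so $\bar{g}'_k \in Z(k)$. Smoothness of $Z$ lifts $\bar{g}'_k$ to some $z \in Z(R)$, and then $g := g' z^{-1}$ lies in $\Ghat(R)$ and still satisfies $g N_\sigma g^{-1} = N$.

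Finally I would conclude by observing that the formal completion of a smooth $\O$-scheme along an $\O$-point is pro-represented by $\O[[t_1,\ldots,t_m]]$ where $m$ is the relative dimension at that point. Hence $\Nil_{\Nbar} \simeq \widehat{Y}_{N_\sigma} \simeq \Spf \O[[t_1,\ldots,t_m]]$ is pro-representable.

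The main obstacle is the first step: representing $Y = G/Z$ as a smooth $\O$-scheme and verifying that the orbit-stabilizer map descends to a locally closed immersion into $\underline{\g}$ in the relative setting over $\O$, rather than only over a field. This genuinely relies on the hypothesis that $Z_G(N_\sigma)$ is $\O$-smooth (to get fppf-descent to a smooth scheme), together with standard representability results for such quotients over a Dedekind base. If one wishes to avoid the global geometry, the same content can be packaged by verifying Schlessinger's conditions (H1)--(H4) directly: smoothness of $Z$ provides the liftings of centralizer elements along small surjections required to glue compatible pairs $(g_1,g_2) \in \Ghat(A_1) \times \Ghat(A_2)$ to an element of $\Ghat(A_1 \times_{A_0} A_2)$.
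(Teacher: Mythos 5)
Your argument is correct in substance, but it takes a genuinely different route from the paper. The paper never constructs the orbit globally: it verifies Schlessinger's criterion directly, exactly as in your closing remark. Since $\Nil_{\Nbar}$ is a subfunctor of the formal neighborhood of $\underline{\g}$ at $\Nbar$, the only thing to check is the Mayer--Vietoris condition of Definition~\ref{defn:deformationcondition}(2): given $N_i = g_i N_\sigma g_i^{-1}$ over $R_1, R_2$ agreeing over $R_0$, the discrepancy of $g_1$ and $g_2$ over $R_0$ lies in $Z_G(N_\sigma)(R_0)$, and $\O$-smoothness of the centralizer lifts it to $R_2$, so the pair glues to a conjugating element over $R_1 \times_{R_0} R_2$. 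Your route instead invokes the existence of the quotient $Y = G/Z_G(N_\sigma)$ as a smooth scheme over the Dedekind base (Anantharaman; cf.\ \cite{conrad14}) and identifies $\Nil_{\Nbar}$ with the formal completion of $Y$ at $N_\sigma$. This is heavier machinery, but it buys more: it immediately gives that $\Nil_{\Nbar}$ is pro-represented by a power series ring over (the completion of) $\O$, hence formal smoothness and the dimension count, which the paper establishes separately in Lemma~\ref{lem:nilpliftable} by the same lifting trick with $\Ghat$ and $Z_G(N_\sigma)$.

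One caveat: the step asserting that $Y \to \underline{\g}$ is a locally closed immersion because it is a monomorphism that is an immersion on geometric fibers is not a valid deduction as stated (fiberwise immersion plus monomorphism does not in general yield an immersion over a base), and relative orbit maps over a discrete valuation ring are exactly where such statements can fail. Fortunately you do not need it: your identification $\Nil_{\Nbar}(R) \simeq \widehat{Y}_{N_\sigma}(R)$ is proved directly --- injectivity because composing with $Y \to \underline{\g}$ recovers $N$, surjectivity by lifting along the smooth torsor $G \to Y$ and correcting by an element of the smooth group $Z_G(N_\sigma)$ --- and pro-representability of the completion of a locally Noetherian scheme at a $k$-point needs nothing more. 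I would simply delete the immersion claim or flag it as unnecessary.
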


\begin{proof}
We will use Schlessinger's criterion to check pro-representability.  As $\Nil_{\Nbar}$ is a subfunctor of the formal neighborhood of the scheme $\underline{\g}$ at $\Nbar$, the only condition to check is the analogue of Definition~\ref{defn:deformationcondition}\eqref{defcon2}:  given a Cartesian diagram in $\C_\O$
 \[
  \xymatrix{
  R_1 \times_{R_0} R_2 \ar[r]^{\pi_2} \ar[d]^{\pi_1} & R_2\ar[d]\\
  R_1 \ar[r] & R_0
  }
 \]
and $N_i \in \Nil_{\Nbar}(R_i)$ such that $N_1$ and $N_2$ reduce to $N_0$, we want to check that $N_1 \times N_2 \in \Nil_{\Nbar}(R_1 \times_{R_0} R_2)$.  By definition, there exists $g_1 \in \Ghat( R_1)$ and $g_2 \in \Ghat( R_2)$ such that $N_1 = g_1 N_\sigma g_1^{-1}$ and $N_2 = g_2 N_\sigma g_2^{-1}$.  Consider the element $g_1 g_2^{-1} \in \Ghat(R_0)$.  Observe that
\[
 g_1 g_2^{-1} N_\sigma g_2 g_1^{-1} = g_1 N_0 g_1^{-1} = N_\sigma \in \g_{R_0}.
\]
In particular, $g_1 g_2^{-1} \in Z_G(N_\sigma)(R_0)$.  The extension $R_2 \to R_0$ has nilpotent kernel, so as $Z_G(N_\sigma)$ is smooth over $\O$ there exists $h \in Z_G(N_\sigma) (R_2)$ lifting $g_1 g_2^{-1}$.  The element
\[
 (g_1, h g_2) \in  R_1 \times_{R_0}  R_2
\]
conjugates $N_1 \times N_2$ to $N_\sigma$.  Hence $N_1 \times N_2 \in \Nil_{\Nbar}(R_1 \times_{R_0} R_2)$.
\end{proof}

\begin{lem} \label{lem:nilpliftable}
The functor $\Nil_{\Nbar}$ is formally smooth, in the sense that for a small surjection $R_2 \to R_1$ of coefficient $\O$-algebras the map
\[
 \Nil_{\Nbar}(R_2) \to \Nil_{\Nbar}(R_1)
\]
is surjective.  Moreover, when $Z_G(N_\sigma)$ is $\O$-smooth and $\Nil_{\Nbar}$ is representable, it has relative dimension $\dim G_k - \dim Z_{G_k}(N_k)$ over $\O$.
\end{lem}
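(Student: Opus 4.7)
The plan is to handle formal smoothness directly from the smoothness of $G$ over $\O$, and then compute the relative dimension by unwinding the tangent space of $\Nil_{\Nbar}$ at $\Nbar$ using the first-order exponential.

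For the formal smoothness part, I would start with a small surjection $R_2 \twoheadrightarrow R_1$ of coefficient $\O$-algebras and an element $N_1 \in \Nil_{\Nbar}(R_1)$. By definition, $N_1 = g_1 N_\sigma g_1^{-1}$ for some $g_1 \in \Ghat(R_1)$. Since $G$ is $\O$-smooth, the reduction map $G(R_2) \to G(R_1)$ is surjective, so $g_1$ lifts to some $g_2 \in G(R_2)$; as $g_1$ reduces to the identity in $G(k)$, so does $g_2$, hence in fact $g_2 \in \Ghat(R_2)$. Setting $N_2 := g_2 N_\sigma g_2^{-1}$ produces an element of $\Nil_{\Nbar}(R_2)$ lifting $N_1$, establishing formal smoothness.

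For the dimension count, suppose $\Nil_{\Nbar}$ is represented by a complete local Noetherian $\O$-algebra $R$. Formal smoothness over $\O$ then forces $R \cong \O[[x_1,\ldots,x_d]]$ for some $d \geq 0$, and $d$ is the $k$-dimension of the tangent space $\Nil_{\Nbar}(k[\epsilon]/\epsilon^2)$. To compute this tangent space, I would use the first-order exponential $\exp : \g_k \xrightarrow{\sim} \Ghat(k[\epsilon]/\epsilon^2)$ sending $X$ to $1+\epsilon X$. Under the orbit map $g \mapsto g N_\sigma g^{-1}$, the element $1+\epsilon X$ goes to $\Nbar + \epsilon [X,\Nbar]$, so after translating to the origin at $\Nbar$ the tangent space equals $[\g_k,\Nbar] = \ad(\Nbar)(\g_k)$. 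Its dimension is therefore $\dim_k \g_k - \dim_k \ker \ad(\Nbar) = \dim G_k - \dim_k Z_{\g_k}(\Nbar)$.

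Finally, the hypothesis that $Z_G(N_\sigma)$ is $\O$-smooth implies its special fiber $Z_{G_k}(\Nbar) = Z_G(N_\sigma)_k$ is $k$-smooth, so $\dim Z_{G_k}(\Nbar) = \dim_k \Lie Z_{G_k}(\Nbar) = \dim_k Z_{\g_k}(\Nbar)$. Substituting gives the claimed relative dimension $\dim G_k - \dim Z_{G_k}(\Nbar)$. There is no serious obstacle: both parts rest directly on $\O$-smoothness of $G$ and of $Z_G(N_\sigma)$, and the only nontrivial ingredients are the standard formula for the differential of conjugation, giving $X \mapsto [X,\Nbar]$, and the identification of tangent-space dimension with the relative dimension of the power series ring representing $\Nil_{\Nbar}$.
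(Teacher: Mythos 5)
Your proposal is correct and follows essentially the same route as the paper: lift $g_1$ through the small surjection using $\O$-smoothness of $G$ to get formal smoothness, then identify the tangent space at $\Nbar$ with $\ad(\Nbar)(\g_k)$ (equivalently $\g_k/\zfrak_\g(\Nbar)$) and use $\O$-smoothness of $Z_G(N_\sigma)$ to convert $\dim_k \zfrak_{\g_k}(\Nbar)$ into $\dim Z_{G_k}(\Nbar)$. The only difference is that you spell out the first-order exponential computation that the paper leaves implicit.
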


\begin{proof}
 Given $N \in \Nil_{\Nbar}(R_1)$, there exists $g \in \Ghat( R_1)$ such that $g N g^{-1} = N_\sigma$.  As $G$ is smooth over $\O$, we may find $g' \in \Ghat(R_2)$ lifting $g$.  Then $(g')^{-1} N_\sigma g'$ is a lift of $N$ to $R_2$.  From its definition, the tangent space to $\Nil_{\Nbar}$ is $\g_k / \zfrak_\g(N_k)$, so the formally smooth $\Nil_{\Nbar}$ has relative dimension $\dim G_k - \dim Z_{G_k}(N_k)$ since $Z_G(N)$ is $\O$-smooth.
\end{proof}

\begin{lem} \label{lem:inverseniln}
Suppose that $A$ is a complete local Noetherian $\O$-algebra with residue field $k$.  Under the assumption that $Z_G(N_\sigma)$ is $\O$-smooth, the inverse limit $\inverselimit \Nil_{\Nbar} (A/\m_A^n)$ equals $\{ N \in \g_A : N = g N_\sigma g^{-1}  \text{ for some } g \in G(A) \}$.
\end{lem}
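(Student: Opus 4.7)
The plan is to show that the inverse limit coincides with the $\Ghat(A)$-orbit of $N_\sigma$ in $\g_A$, from which the stated equality (with $G(A)$ in place of $\Ghat(A)$) will follow from the $\O$-smoothness of $Z_G(N_\sigma)$. The easy inclusion is that any $g \in \Ghat(A)$ conjugating $N_\sigma$ to some $N$ reduces modulo $\m_A^n$ to a witness $g_n \in \Ghat(A/\m_A^n)$ of $N_n := N \bmod \m_A^n \in \Nil_{\Nbar}(A/\m_A^n)$ for every $n$, placing $N$ in the inverse limit.

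The main step is the reverse inclusion. Given $N$ in the inverse limit, I will inductively construct a compatible system $(\tilde{g}_n)$ with $\tilde{g}_n \in \Ghat(A/\m_A^n)$ and $\tilde{g}_n N_\sigma \tilde{g}_n^{-1} = N_n$. Taking $\tilde{g}_1 = 1$, for the step $n-1 \to n$ pick any $g_n \in \Ghat(A/\m_A^n)$ with $g_n N_\sigma g_n^{-1} = N_n$, available by the definition of $\Nil_{\Nbar}$. Its reduction $\overline{g_n}$ modulo $\m_A^{n-1}$ need not equal $\tilde{g}_{n-1}$, but both conjugate $N_\sigma$ to $N_{n-1}$, so $h := \tilde{g}_{n-1}^{-1} \overline{g_n}$ lies in $Z_G(N_\sigma)(A/\m_A^{n-1}) \cap \Ghat(A/\m_A^{n-1})$. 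Since the surjection $A/\m_A^n \twoheadrightarrow A/\m_A^{n-1}$ has kernel $\m_A^{n-1}/\m_A^n$ annihilated by $\m_A/\m_A^n$, it is a small surjection, and the $\O$-smoothness of $Z_G(N_\sigma)$ yields a lift $\tilde{h} \in Z_G(N_\sigma)(A/\m_A^n)$ of $h$; this $\tilde{h}$ automatically lies in $\Ghat(A/\m_A^n)$ because its further reduction to $G(k)$ equals that of $h$, namely $1$. Setting $\tilde{g}_n := g_n \tilde{h}^{-1}$ then conjugates $N_\sigma$ to $N_n$ and reduces to $\tilde{g}_{n-1}$. By the affineness of $G$ and the completeness of $A$, the compatible sequence assembles into a single element $g \in \Ghat(A) \subset G(A)$ with $g N_\sigma g^{-1} = N$.

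To reconcile with the statement's use of $G(A)$, I will observe the following: given any $g \in G(A)$ with $gN_\sigma g^{-1}$ reducing to $\Nbar$, the element $\overline{g}$ centralizes $\Nbar = (N_\sigma)_k$ and so lies in $Z_G(N_\sigma)(k)$. Iterating $\O$-smoothness of $Z_G(N_\sigma)$ across the small surjections $A/\m_A^n \to A/\m_A^{n-1}$ lifts $\overline{g}$ to some $\tilde{g} \in Z_G(N_\sigma)(A)$, and then $g \tilde{g}^{-1} \in \Ghat(A)$ provides the same conjugate from within $\Ghat(A)$.

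The principal obstacle is the inductive compatibility of the chosen $g_n$'s: arbitrary choices at each level will not glue together, and correcting them requires lifting centralizer elements through small surjections. This is exactly what $\O$-smoothness of $Z_G(N_\sigma)$ provides, explaining why the hypothesis is crucial in the statement.
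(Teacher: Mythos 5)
Your proof is correct, and each step checks: the base case $\Nil_{\Nbar}(k)=\{\Nbar\}$, the observation that $h=\tilde g_{n-1}^{-1}\overline{g_n}$ centralizes $N_\sigma$, the smallness of $A/\m_A^n\to A/\m_A^{n-1}$, and the passage to the limit using that $G$ (hence the closed subscheme $Z_G(N_\sigma)$) is affine and $A$ is complete. The route differs in flavor from the paper's, which disposes of the lemma in one sentence as immediate from completeness of $A$ and representability of $\Nil_{\Nbar}$; you instead re-prove it by successive approximation. Your version is arguably the more informative one: representability plus completeness identifies $\inverselimit\Nil_{\Nbar}(A/\m_A^n)$ with the $A$-points of the pro-representing object, but identifying that set with the orbit of $N_\sigma$ still requires gluing the level-$n$ witnesses $g_n$, and your correction step --- lifting the centralizer element $h$ through the small surjection using the $\O$-smoothness of $Z_G(N_\sigma)$ --- is exactly the mechanism the paper itself uses in its pro-representability lemma for $\Nil_{\Nbar}$ (the gluing over a fiber product via Schlessinger's criterion). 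Finally, you were right to insert the condition that $gN_\sigma g^{-1}$ reduces to $\Nbar$ when reconciling $\Ghat(A)$ with the $G(A)$ appearing in the statement: the inverse limit only contains elements reducing to $\Nbar$, so the intended reading of the right-hand side is the set of such conjugates (equivalently the $\Ghat(A)$-orbit, which is what the subsequent Remark on continuity uses), and your lift of $\overline g\in Z_G(N_\sigma)(k)$ to $Z_G(N_\sigma)(A)$ by smoothness and completeness is the correct way to pass between the two descriptions.
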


\begin{proof}
This is immediate since $A$ is complete and $\Nil_{\Nbar}$ is representable by an affine scheme.
%
%
\end{proof}

\begin{remark}
If we had defined $\Nil_{\Nbar}$ on the larger category $\Chat_\O$ in the obvious way, Lemma~\ref{lem:inverseniln} would say that $\Nil_{\Nbar}$ is continuous.
\end{remark}

\begin{remark}
 There is no problem generalizing Definition~\ref{defn:purenilp} to any almost-reductive group, using a construction of $N_\sigma$ using root data as discussed in Remark~\ref{rmk:rootreps}.  However, we only establish the smoothness of $Z_G(N_\sigma)$ for classical groups.
\end{remark}

\begin{remark}
One can define a scheme-theoretic ``nilpotent cone'' over $\O$ as the vanishing locus of the ideal of non-constant homogeneous $G$-invariant polynomials on $\g$.  The arguments in this section could be rephrased as constructing a formal scheme of pure nilpotents inside the formal neighborhood of $\Nbar$ in $\underline{\g}$.  A natural question is whether there is a broader notion of pure nilpotents that gives a locally closed subscheme of the scheme-theoretic nilpotent cone.  For instance, for $N, N' \in \g$, if their images in $\g_K$ and $\g_k$ are nilpotent in orbits with the same combinatorial parameters, are $N$ and $N'$ conjugate under $G$ over a discrete valuation ring local over $\O$?

When $G = \GL_n$, this has been explored by Taylor in the course of constructing local deformation conditions \cite[Lemma 2.5]{taylor08}.  The method uses the explicit description of the orbit closures given by specifying the Jordan canonical form to define an analogue of the orbit closures over $\O$.  It would be interesting to find a way to do so more generally.
\end{remark}

\section{Smoothness of Centralizers of Pure Nilpotents} \label{sec:centralizers}

In order for the functor $\Nil_{\Nbar}$ to be representable, we need that $Z_G(N_\sigma)$ is $\O$-smooth.  
Recall that for $N \in \g$, the scheme-theoretic centralizer $Z_G(N)$ represents the functor
\[
 R \mapsto \{g \in G(R) : \Ad_G(g) N_R = N_R \}
\]
for $\O$-algebras $R$.  We will study the centralizer $Z_G(N_\sigma)$ in more detail where $N_\sigma \in \g$ is an element satisfying \eqref{eq:ncondition}.  In particular, this centralizer will be shown to be smooth when $G$ is symplectic or orthogonal.  We first review the known theory over fields, and then develop and apply a technique to deduce smoothness over $\O$ (i.e. $\O$-flatness) from the known smoothness in the field case.

\subsection{Centralizers over Fields} \label{sec:centralizerssmooth}
 In this section, let $k$ be an algebraically closed field, $G$ be a connected reductive group over $k$, and $N$ a nilpotent element of $\g = \Lie G$.  
As the formation of the scheme-theoretic centralizer commutes with base change, smoothness results for $Z_G(N)$ over $k$ will imply such results over general fields (not necessarily algebraically closed).  

The group scheme $Z_G(N)$ is the fiber over $0 \in \g$ of the composition
\[
 G \overset{\Ad_G} \longrightarrow \GL(\g) \overset{T \mapsto T N - N} \longrightarrow \g.
 \]
Hence $\Lie Z_G(N)$ is the kernel of
\[
 \g \overset{\ad_\g} \longrightarrow \End(\g) \overset{T \mapsto T N }\longrightarrow \g
\]
which is the Lie algebra centralizer $\zfrak_\g(N)$.  

\begin{remark}
In references using the language of varieties rather than schemes (such as \cite{jantzen04}), $Z_G(N)$ is usually \emph{defined} via its geometric points and hence is reduced and smooth, so the condition that the scheme $Z_G(N)$ is smooth becomes the condition that the variety $Z_G(N)$ has Lie algebra $\mathfrak{z}_\g(N)$.
\end{remark}

In a wide range of situations, all nilpotent centralizers are smooth.  A direct calculation shows that this holds for $G = \GL_n$ (see \cite[\S2.3]{jantzen04}), and a criterion of Richardson \cite[Theorem 2.5]{jantzen04} can be applied to show:

\begin{fact} \label{fact:gspcentralizers}
 If $G$ is an orthogonal or symplectic (similitude) group, any nilpotent centralizer is smooth over $k$.  
\end{fact}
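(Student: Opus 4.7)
The plan is to invoke Richardson's criterion (\cite[Theorem 2.5]{jantzen04}): a connected reductive group $G$ over an algebraically closed field $k$ admitting a non-degenerate $G$-invariant symmetric bilinear form on $\g$ has the property that $Z_G(x)$ is smooth over $k$ for every $x \in \g$, so in particular for every nilpotent $N$. The work lies in exhibiting such a form in each of the four cases of interest, always under the standing hypothesis that $p$ is very good for $G$ (hence $\chr(k) \neq 2$).

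For $G = \Sp_m$ or $G = \SO_m$, I would use the trace form $\kappa(X, Y) = \tr(XY)$ inherited from the standard embedding into $\gl_m$. It is manifestly $G$-invariant by cyclicity of the trace, and its restriction to $\mathfrak{sp}_m$ or $\mathfrak{so}_m$ is classically non-degenerate in characteristic $\neq 2$ (a short computation in the standard basis, for which the matrix of the form is, up to scaling, a multiple of $\pm 2$ on antidiagonal pairs). Richardson's criterion then directly yields the smoothness of $Z_G(N)$.

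For the similitude groups $\GSp_m$ and $\GO_m$ (the latter treated via its identity component), the Lie algebra decomposes in characteristic $\neq 2$ as $\g = \g_0 \oplus \zfrak$, where $\g_0$ is the Lie algebra of the underlying $\Sp_m$ or $\SO_m$ and $\zfrak = k \cdot I_m$ is the one-dimensional Lie algebra of the central $\Gm$. I would equip $\g$ with the orthogonal direct sum of the trace form on $\g_0$ and any non-zero bilinear form on $\zfrak$. This is $G$-invariant, as the adjoint action preserves the trace form on $\g_0$ as before and acts trivially on the central line, and it is non-degenerate since each summand is; Richardson's criterion applies once more. In the disconnected case, one then appeals to the fact that $Z_G(N)$ is a finite union of translates of the identity component of $Z_{G^\circ}(N)$, so smoothness in the connected case propagates. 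There is no genuine obstacle once Richardson's criterion is in hand; the only substantive input is the classical non-degeneracy of the trace form on $\mathfrak{sp}_m$ and $\mathfrak{so}_m$ in characteristic different from $2$, with the similitude cases adding only an orthogonal-direct-sum bookkeeping step.
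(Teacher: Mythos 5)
Your proposal is correct and follows essentially the same route as the paper, which likewise deduces this fact from Richardson's criterion \cite[Theorem 2.5]{jantzen04}; you simply spell out the verification of its hypothesis (the trace form on $\mathfrak{sp}_m$ and $\mathfrak{so}_m$ in characteristic $\neq 2$, the central line for the similitude groups, and translation to handle the disconnected $\GO_m$), which the paper leaves to the citation.
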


\begin{remark}
Suppose $Z_G(N)$ is smooth over $k$ and $p$ is good for $G$.  The classification of nilpotent orbits is independent of $p$, as are their dimensions, so the dimension of $Z_G(N)$ is independent of $p$ as well.
\end{remark}

\subsection{Checking Flatness over a Dedekind Base} \label{sec:technical}

We want to analyze smoothness of centralizers in the relative setting (especially over $\Spec \O$).  If $Z_G(N_\sigma) \to \Spec \O$ is flat and the special and generic fibers are smooth then $Z_G(N)$ is smooth over $\O$.  The following lemma gives a way to check that a morphism to a Dedekind scheme is flat.

\begin{lem} \label{lem:flatnesscriterion}
Let $f : X \to S$ be finite type for a connected Dedekind scheme $S$.  Then $f$ is flat provided the following all hold:
\begin{enumerate}
 \item for each $s \in S$, $X_s$ is reduced and non-empty;
 \item for each $s \in S$, $X_s$ is equidimensional with dimension independent of $s$;
 \item there are sections $\{\sigma_i \in X(S)\}$ to $f$ such that for every irreducible component of a fiber above a closed point, there is a section $\sigma_i$ which meets the fiber only in that component.
\end{enumerate}
\end{lem}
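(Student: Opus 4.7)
The plan is to show $f$ is flat by verifying that $\mathcal{O}_X$ is $\pi$-torsion-free for every local uniformizer $\pi$ on $S$, which is equivalent to flatness over a Dedekind base. The strategy has two stages: first rule out irreducible components of $X$ contained in a single closed fiber using (2) and (3), then use (1) together with the absence of such vertical components to force each local ring $R = \mathcal{O}_{X, x}$ to be reduced and $\pi$-torsion-free.

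For the first stage, suppose $W$ is an irreducible component of $X$ with $W \subseteq X_s$ for a closed $s$. Comparing local dimensions at the generic point $\eta_W$---using that $X_s$ is equidimensional of dimension $d$ by (2), and that the closed immersion $X_s \hookrightarrow X$ gives $\dim_{\eta_W} X_s \leq \dim_{\eta_W} X$---forces $\dim W = d$, so $W$ is itself an irreducible component of $X_s$. Condition (3) then yields a section $\sigma$ with $x := \sigma(s) \in W$ meeting no other component of $X_s$. Setting $R = \mathcal{O}_{X, x}$, the splitting of the structure map by $\sigma^*$ shows $\pi$ is non-nilpotent in $R$; meanwhile (1) and (3) together imply that $R/\pi R = \mathcal{O}_{X_s, x}$ is a local domain (reduced with unique minimal prime), so $\pi R$ is a prime of $R$ which must equal the minimal prime $\mathfrak{p}_W$ by minimality. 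Hence $X_s = W$ as schemes locally at $x$. Since $\sigma(S)$ dominates $S$, some other irreducible component $W'$ of $X$ through $x$ contains $\sigma(S)$; as $W'$ is the closure in $X$ of a component of $X_\eta$, we have $\dim W' = d+1$, and upper semi-continuity gives $\dim W'_s \geq d$. But $W'_s \subseteq X_s = W$ locally, and $W$ is irreducible of dimension $d$ and reduced, so the underlying sets agree and reducedness of $X_s$ promotes this to $W'_s = X_s$ as schemes; this translates to $\mathfrak{p}_{W'} \subseteq \pi R = \mathfrak{p}_W$, contradicting incomparability of distinct minimal primes of $R$.

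With every component of $X$ now dominant over $S$, the uniformizer $\pi$ lies in no minimal prime of $R := \mathcal{O}_{X, y}$ at any $y$, so $\pi$ is a non-zero-divisor modulo the nilradical $\sqrt{0}$. Condition (1) forces $\sqrt{0} \subseteq \pi R$ (nilpotents map to zero in the reduced $R/\pi R$); iteratively writing any $r \in \sqrt{0}$ as $\pi r_1$ with $r_1 \in \sqrt{0}$---possible because $\pi$ is a non-zero-divisor modulo $\sqrt{0}$---must terminate via Krull's intersection theorem, whence $R$ is reduced. The natural injection $R \hookrightarrow \prod_{W \ni y} \mathcal{O}_{W, y}$ into a product of domains on which $\pi$ is a non-zero-divisor then yields that $\pi$ is a non-zero-divisor on $R$, giving flatness. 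The main obstacle is the first stage: the combined use of (1) and (3) to identify $W$ with $X_s$ as schemes locally, together with the dimension-theoretic input that every dominant component of $X$ has generic fiber a component of $X_\eta$, has to be orchestrated delicately to produce the prime-incomparability contradiction.
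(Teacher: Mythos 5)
Your argument is correct, but it takes a genuinely different route from the paper's. The paper works globally over the discrete valuation ring: it forms the schematic closure $X'$ of the generic fiber (automatically flat), uses the dimension theory of flat local maps \cite[Theorems 15.1, 15.5]{crt} together with the sections and equidimensionality to show that $X'_s$ fills up every irreducible component of $X_s$, invokes reducedness of $X_s$ to conclude $X'_s = X_s$ as schemes, and then kills the $\pi$-power-torsion ideal sheaf $J$ in $0 \to J \to \mathcal{O}_X \to \imath_* \mathcal{O}_{X'} \to 0$ by reducing that sequence modulo $\pi$. You instead argue pointwise with local rings: the sections together with reducedness and equidimensionality of the fibers rule out irreducible components of $X$ contained in a closed fiber (your prime-incomparability contradiction at the section point), and then reducedness of the fibers plus Krull's intersection theorem shows that each local ring of $X$ at a point over a closed point is reduced with $\pi$ a non-zero-divisor, hence torsion-free and flat over the local base. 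Your version is more elementary --- no schematic closure, no coherent ideal sheaf --- and yields as a byproduct that $X$ is reduced; the cost is a more delicate first stage, and you still need a genuine dimension-theoretic input there, namely Chevalley's semicontinuity of $y \mapsto \dim_y X_{f(y)}$ applied to the dominant component $W'$, which plays the role that flatness of $X'$ plays in the paper. Two harmless imprecisions: an irreducible component of $X$ contained in $X_s$ is automatically an irreducible component of $X_s$ (maximality in $X$ implies maximality in the closed subset $X_s$), so the local-dimension comparison at $\eta_W$ is not needed; and in the final step you should quote the local bound $\dim_x W'_s \ge d$ at the section point $x$ (which is exactly what semicontinuity provides), since only the components of $W'_s$ passing through $x$ are trapped inside $W$ on the neighborhood where $X_s$ coincides with $W$.
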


\begin{remark}
This lemma is a modification of \cite[Proposition 6.1]{gy03} to allow multiple irreducible components in the fibers.
\end{remark}

\begin{proof}
 It suffices to prove the result when $S = \Spec (A)$ for $A$ a discrete valuation ring with uniformizer $\pi$.  Let $X_\eta$ be the generic fiber and $X_s$ the special fiber.  Consider the schematic closure $\imath : X' \hookrightarrow X$ of the generic fiber.  The scheme $X'$ is flat over $\Spec(A)$ since flatness is equivalent to being torsion-free over a discrete valuation ring, and there is an exact sequence
 \begin{equation} \label{eq:idealsheaf}
  0 \to J \to \O_X \to \imath_* \O_{X'} \to 0
 \end{equation}
 where $J$ is a coherent sheaf killed by a power of $\pi$.  
We will show that $\imath$ is an isomorphism by analyzing the special fiber.

First, we claim that the dimension of each irreducible component on the special fiber of $X'$ is the same as the dimension of the equidimensional $X_\eta$.  We will get this from flatness of $X'$.  The generic fiber of $X'$ is $X_\eta$, which is equidimensional and non-empty by hypothesis.  Furthermore, $X'$ is the union of the closures $Z_i$ of the reduced irreducible components $X_{\eta,i}$ of $X_\eta$, and each $Z_i$ is $A$-flat with integral $\eta$-fiber, hence integral.  We just need to analyze the dimension of irreducible components of $(Z_i)_s$ when $(Z_i)_s \neq \emptyset$.  Since $Z_i$ is integral, we can apply \cite[Theorem 15.1, 15.5]{crt} to such $Z_i$ to conclude that the dimension of each irreducible component of the special fiber of $X'$ is the same as the dimension of the generic fiber.

Observe that the sections $\sigma_i$ factor through the closed subscheme $X' \subset X$, as we can check this on the generic fiber since $X'$ is $A$-flat.  Thus $X'$ meets every irreducible component of $X_s$ away from the other irreducible components of $X_s$.  We would have that $|X_s'| = |X_s|$ if $X'_s$ is equidimensional of the same dimension as the equidimensional $X_s$.  We have shown the dimension of any irreducible component in $X'_s$ is the same dimension as the common dimension of irreducible components of the generic fiber $X_\eta$ of $X'$.  By hypothesis, the dimension of any irreducible component of the generic fiber of $X$ is the same as the dimension of any irreducible component of the special fiber of $X$.  Thus the dimension of any irreducible component of $X'_s$ is the same as the dimension of each irreducible component of $X_s$, giving that $|X_s'| = |X_s|$.  As $X_s$ is reduced, this forces
 $\imath_s : X'_s \into X_s$ to be an isomorphism.

Now tensoring $\eqref{eq:idealsheaf}$ with the residue field of $A$ gives an exact sequence
\[
 0 \to J/\pi J   \to \O_{X,s} \to \imath_* \O_{X',s} \to 0
\]
because $\O_{X'}$ is $A$-flat.  But $J/\pi J = 0$ as $\imath_s$ is an isomorphism.  Hence $J = \pi J = \pi^2 J = \ldots = \pi^n J = 0$ for $n$ large, so $X = X'$ is flat over $A$.
\end{proof}

\begin{cor} \label{cor:smoothnesscriterion}
In the situation of the lemma, if the fibers are also smooth then $X$ is smooth.
\end{cor}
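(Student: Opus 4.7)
The plan is very short, since almost all of the substance is already packaged in Lemma~\ref{lem:flatnesscriterion}. First, I would simply invoke the lemma to conclude that $f : X \to S$ is flat: all three hypotheses of the lemma are in force (smooth fibers are in particular reduced and non-empty, and the equidimensionality and section hypotheses are given). It then remains to upgrade flat with smooth fibers to smooth.

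For the upgrade, I would appeal to the standard fibral criterion for smoothness: a morphism that is flat and locally of finite presentation and whose geometric fibers are smooth is itself smooth (see for example \cite[Th\'eor\`eme 17.5.1]{ega4}, though the author can cite whichever reference he prefers). The finite presentation requirement is automatic here because $S$ is a Dedekind scheme, hence locally Noetherian, so any finite type morphism to $S$ is locally of finite presentation. Combining these two ingredients yields that $X$ is smooth over $S$.

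The only step worth a line of justification in the write-up is the observation that ``fibers smooth'' in the statement of the corollary can be interpreted as ``geometric fibers smooth'' without loss of content, since smoothness over a field is preserved and reflected under field extension. There is no genuine obstacle; the entire content of the corollary is that having verified the flatness criterion, one gets smoothness essentially for free.
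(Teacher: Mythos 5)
Your proof is correct and is essentially the paper's own argument: the paper likewise deduces flatness from Lemma~\ref{lem:flatnesscriterion} and then invokes the standard fact that a flat, finite-type morphism of Noetherian schemes with smooth fibers is smooth. Your only additions are to spell out why the finite-presentation hypothesis of the fibral criterion is automatic and why ``fibers smooth'' and ``geometric fibers smooth'' are interchangeable, both of which are routine and harmless.
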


\begin{proof}
For a flat morphism of finite type between Noetherian schemes, smoothness of all fibers is equivalent to smoothness of the morphism.
\end{proof}

\subsection{Centralizers for Orthogonal and Symplectic Groups} \label{sec:centclassical}

To apply Corollary~\ref{cor:smoothnesscriterion}, we need information about the component group of centralizers of nilpotents.  For $\GL_n$ over a field, all such centralizers are connected.  For symplectic and orthogonal groups, there is an explicit description of $Z(N_\sigma)$ where $N_\sigma$ is the nilpotent constructed in \S\ref{sec:nilprep}.  We continue the notation of that section: $G$ is $\Sp_m$ or $\Orth_m$ (with $m \geq 4$) over a discrete valuation ring $\O$ with a residue field $k$ of good characteristic $p\neq 2$.  

Let $\sigma : m_1 + \ldots + m_r$ be an admissible partition of $m$.  We assume that $\O$ is large enough so that Proposition~\ref{prop:symorthreps} holds, and take $N := N_\sigma$.  Then there exists elements  $v_1 ,\ldots ,v_r \in M:= \O^m$ such that
\[
 v_1, N v_1, \ldots , N^{m_1-1} v_1, v_2 , N v_2 , \ldots , N^{m_r-1} v_r
\]
is a basis for $M$.  Furthermore, $N^{m_i} v_i = 0$ for $i =1 , \ldots r$, and the pairing between basis elements is given by $\varphi:= \varphi_\sigma$.  In particular, each $v_i$ pairs non-trivially with only one other basis element $X^{d_i-1} v_{i^*}$, for some $i^* \in \{1,\ldots ,r\}$. 

To understand the $G$-centralizer of $N$, we construct an associated grading of $M$ as in \cite[\S3.3,3.4]{jantzen04}.  This is motivated by the Jacobson-Morosov theory of $\mathfrak{sl}_2$-triples over a field of sufficiently large characteristic, but for symplectic and orthogonal groups it is constructed by hand in characteristic $p \neq 2$ below.  

\begin{remark}
Let $M_k = M \tensor{\O} k = k^m$.
 Every nilpotent $X \in \End(M_k)$ gives a filtration (and grading) of $M_k$ defined by $\Fil^i = \ker (X^i)$.  For $\GL_n$, this is a nice filtration and is used in \cite{cht08} to define the minimally ramified deformation condition for $\GL_n$.  However, this filtration need not be isotropic with respect to the pairing, so we will construct a nicer grading associated to $X$.
\end{remark}


\begin{defn}
Let $M(s)$ be the span of $N^j v_i$ for all $i$ and $j$ such that $s = 2j+1-m_i$.  We set $M^{(s)} = \bigoplus_{t \geq s} M(t)$, and also define $L(s)$ to be the span of $\{v_i : v_i \in M(s)\}$.
\end{defn}


We now record some elementary properties of the preceding construction; all are routine to check, and the analogous proofs over a field may be found in \cite[\S3.4]{jantzen04}.  We have that $M = \bigoplus_s M(s)$, and 
$$v_i \in M(-(d_i-1)), N v_i \in M(-(d_i-1) +2), \ldots , N^{d_i-1} v_r \in M(d_r-1).$$
Furthermore, we know $N \cdot M(s) \subset M(s+2)$ and $M(s) = N \cdot M(s-2) \oplus L(s)$ for $s \leq 0$.

The dimension of $M(s)$ is $m_s(\sigma) := \# \{ j : d_j -1 \geq |s|\}$.  The dimension of $L(s)$ equals $l_{s}(\sigma) := m_{s+1}(\sigma) - m_s(\sigma) $.  Furthermore, the pairing $\varphi$ interacts well with the grading: a computation with basis elements gives that 
$\varphi( M(s), M(t)) \neq 0$ implies $s+t=0$.

The above grading on $M$ corresponds to the one-parameter subgroup $\lambda : \Gm \to G$ for which the action of $t \in \Gm$ on $M(s)$ is given by scaling by $t^s$.  
The dynamic method (see \cite[\S2.1]{pred15}) associates to $\lambda$ a parabolic subgroup $P_G(\lambda)$ with Levi $Z_G(\lambda)$.  Define $C_N$ and $U_N$ to be the scheme-theoretic intersections
\begin{align*}
 C_N &= Z_G(N) \cap Z_G(\lambda) = \{g \in Z_G(N) : g M(i) = M(i) \text{ for all } i \} \\
 U_N &= Z_G(N) \cap U_G(\lambda) = \{ g \in Z_G(N) : (g-1) M^{(i)} \subset M^{(i+1)} \text{ for all } i\}.
\end{align*}

\begin{fact} \label{fact:centralizerstructure}
 The group-scheme $Z_G(N)_k$ is a semi-direct product of $(C_N)_k$ and the smooth connected unipotent subgroup $(U_N)_k$.  In particular, the connected components of $Z_G(N)_k$ are the same as the connected components of $(C_N)_k$.
\end{fact}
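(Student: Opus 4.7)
The plan is to apply the dynamic method to the cocharacter $\lambda$. Write $P := P_G(\lambda)$ with Levi decomposition $P = L \ltimes U$, where $L := Z_G(\lambda)$ and $U := U_G(\lambda)$. By construction $\Ad(\lambda(t)) N = t^2 N$, so $N$ lies in the weight-two part $\g(2)$ of the grading of $\g$ attached to $\lambda$. The first step is to verify that $\lambda$ is an \emph{associated cocharacter} for $N$ in the sense of Jacobson--Morozov: one checks directly from the explicit bases of \S\ref{sec:integralrep}, using the skew-adjointness of $N$ with respect to $\varphi$ (legitimate since $p \neq 2$), that $\ad(N) \colon \g(i) \to \g(i+2)$ is surjective for all $i \geq -1$.

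The principal obstacle is to show $Z_G(N)_k \subset P$. The standard argument via associated cocharacters runs as follows: for $g \in Z_G(N)(\kbar)$, the one-parameter family $t \mapsto \lambda(t) g \lambda(t)^{-1}$ has a limit as $t \to 0$, which is equivalent to $g \in P$. To see this, decompose $g$ into components of various weights under $\Ad \lambda$; the relation $\Ad(g) N = N$ together with surjectivity of $\ad(N)$ at nonnegative weights lets one conclude by descending induction on weight that all negative-weight contributions to $g$ vanish. This is the step that genuinely uses the associated cocharacter property, and hence good characteristic.

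Granting $Z_G(N) \subset P$, factor any $g \in Z_G(N)$ as $g = l u$ with $l \in L$ and $u \in U$. Since $L$ preserves the grading, $\Ad(l) N \in \g(2)$; since $u \in U$ strictly raises weights, $\Ad(u) N - N \in \bigoplus_{i > 2} \g(i)$. Comparing the weight-two components of $\Ad(g)N = N$ forces $\Ad(l) N = N$, whence $l \in C_N$ and then $\Ad(u) N = N$, whence $u \in U_N$. Conversely $C_N \cdot U_N \subset Z_G(N)$ and $C_N \cap U_N \subset L \cap U = 1$, so $Z_G(N)_k = (C_N)_k \ltimes (U_N)_k$.

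It remains to address $U_N$. Unipotence is immediate since $U_N$ is closed in the unipotent $U$. For smoothness and connectedness, I would use the exponential isomorphism between $U$ and its Lie algebra $\ufrak$ (available in good characteristic), under which $U_N$ corresponds to the linear subspace $\zfrak_\g(N) \cap \ufrak$; this identifies $U_N$ with an affine space over $k$. The final assertion on connected components follows because $U_N$ is connected, so the projection $Z_G(N)_k \to (Z_G(N)_k)/(U_N)_k \simeq (C_N)_k$ coming from the semi-direct product decomposition induces a bijection on $\pi_0$.
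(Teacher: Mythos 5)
The paper does not prove this Fact at all: it cites \cite[Proposition 3.12]{jantzen04} for the classical-group computation, noting \cite[Proposition 5.10]{jantzen04} as the general version. Your argument reconstructs the \emph{general} route (Jantzen 5.9--5.11), which is a legitimate and more conceptual approach. The second half of your argument is sound: once $Z_G(N)_k \subset P_G(\lambda)$ is known, writing $g = lu$ and comparing weight-two components gives $l \in C_N$, $u \in U_N$; and the exponential identification $U_N \cong \zfrak_\g(N) \cap \ufrak$ does hold (by inducting upward on weight in the identity $\exp(\ad X)N = N$, the lowest possibly-nonzero weight contribution in each step is $[X_i,N]$ alone), giving smoothness and connectedness of $U_N$.

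The gap is in the crucial inclusion $Z_G(N)_k \subset P_G(\lambda)$. Your proposed justification --- ``decompose $g$ into components of various weights under $\Ad\lambda$ and induct downward'' --- does not make sense for a group element; elements of $G(\kbar)$ have no weight decomposition. What the weight argument actually proves is the \emph{Lie-algebra} inclusion $\zfrak_\g(N) \subset \pfrak = \bigoplus_{i \geq 0}\g(i)$ (and the relevant tool is injectivity of $\ad(N) \colon \g(i) \to \g(i+2)$ for $i \leq -1$, not surjectivity at $i \geq -1$). That gives at best $Z_G(N)^\circ \subset P$, which is not enough --- the entire point of this Fact is to control the component group of $Z_G(N)_k$, so the inclusion must hold for the whole centralizer. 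The group-level statement is a genuine theorem (Premet; recorded as \cite[Proposition 5.9]{jantzen04}); one clean proof uses the uniqueness of associated cocharacters up to $Z_G(N)^\circ$-conjugation: for $g \in Z_G(N)$, both $\lambda$ and $g\lambda g^{-1}$ are associated to $N$, hence conjugate by some $z \in Z_G(N)^\circ \subset P$, and then $z g \in Z_G(\lambda) \subset P$ forces $g \in P$. Either supply such an argument or cite the result directly; as written the key step is asserted rather than proved.
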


\begin{remark}
This is \cite[Proposition 3.12]{jantzen04}.  The existence of $\lambda$ and this decomposition is not specific to symplectic and orthogonal groups \cite[Proposition 5.10]{jantzen04}.
\end{remark}

We finally give a concrete description of $C_N$.  We first define a pairing on $L(s)$.  Recall that the space $L(s)$ of ``lowest weight vectors'' in $M(s)$ has basis $\{ v_i : 1-d_i = s\}$.  We define a pairing on $L(s)$ by
\[
 \psi_s(v,w) = \varphi(v,N^{-s} w).
\]
A direct calculation shows that $\psi_s$ is non-degenerate and that $\psi_s$ is symmetric if $(-1)^{s} = \epsilon$ and is alternating if $(-1)^{s} = -\epsilon$ \cite[\S3.7]{jantzen04}.  

A point of $C_N$ preserves the grading on $M$, and since it commutes with the ``raising operator'' $N$ its action on $M$ is determined by its action on the space $L(s)$ of  ``lowest weight vectors'' in $M(s)$.  So the following fact is no surprise.

\begin{prop}
There is an isomorphism of algebraic groups
\[
 C_N \simeq \prod_{s \leq 0} \underline{\Aut}(L(s),\psi_s)
\]
\end{prop}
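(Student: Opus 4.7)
My plan is to construct mutually inverse morphisms of $\O$-group schemes between $C_N$ and $\prod_{s \leq 0} \underline{\Aut}(L(s), \psi_s)$ by working on functors of points and applying Yoneda. The central observation is that $L(s) \subset M(s)$ admits the intrinsic characterization $L(s) = M(s) \cap \ker(N^{1-s})$. To verify this, I use the direct sum decomposition $M(s) = L(s) \oplus N M(s-2)$ of free $\O$-modules: $N^{1-s}$ kills $L(s)$ because each $v_i \in L(1-m_i)$ satisfies $N^{m_i} v_i = 0$, while $N^{1-s}$ sends the natural basis of $N M(s-2)$ to distinct nonzero basis vectors in $M(2-s)$, so $N^{1-s}|_{N M(s-2)}$ is split injective and remains injective after any base change. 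Thus $L(s)_R = M(s)_R \cap \ker(N_R^{1-s})$ for every $\O$-algebra $R$.

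For the forward map, given $g \in C_N(R)$, the element $g$ preserves $M(s)_R$ and commutes with $N_R$, so it preserves the intrinsically described $L(s)_R$. Restriction yields $g_s \in \Aut_R(L(s)_R)$, and preservation of $\psi_s$ is immediate from
\[
\psi_s(g u, g w) = \varphi(g u, N^{-s} g w) = \varphi(g u, g N^{-s} w) = \varphi(u, N^{-s} w) = \psi_s(u, w)
\]
for $u, w \in L(s)_R$, using $g \in G(R)$ and $N$-commutation. This defines a morphism $\Phi \colon C_N \to \prod_{s \leq 0} \underline{\Aut}(L(s), \psi_s)$.

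For the inverse, given $(g_s)_s \in \prod_s \underline{\Aut}(L(s), \psi_s)(R)$, define $g \in \End_R(M_R)$ by $R$-linearly extending $g(N^j v_i) := N^j g_{1-m_i}(v_i)$ on the basis $\{N^j v_i\}_{0 \leq j < m_i}$. By construction $g$ commutes with $N_R$ and preserves the grading, so the substantive check is that $g$ preserves $\varphi$. The explicit block form of $\varphi_\sigma$ from Definition~\ref{defn:nilpotentpieces} forces basis vectors to pair non-trivially only in dual pairs within the same string-length block: $\varphi(N^j v_i, N^k v_l) \neq 0$ requires $m_l = m_i$ and $j + k = m_i - 1$. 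Pushing $N$'s across $\varphi$ via skew-symmetry, the identity $\varphi(g N^j v_i, g N^k v_l) = \varphi(N^j v_i, N^k v_l)$ reduces on both sides (with the same sign $(-1)^j$) to $\psi_s(g_s v_i, g_s v_l) = \psi_s(v_i, v_l)$ with $s = 1 - m_i$, which holds because $g_s$ preserves $\psi_s$; in the case $m_l \neq m_i$, both sides vanish by the same block structure.

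The two morphisms are mutually inverse by direct computation on the basis $\{N^j v_i\}$: starting from $g \in C_N(R)$, the reconstruction recovers $g$ because both maps commute with $N_R$ and agree on each $v_i \in L(1-m_i)_R$; starting from $(g_s)_s$, restriction of the extended $g$ to $L(s)_R$ recovers $g_s$ since $g v_i = g_{1-m_i}(v_i) \in L(1-m_i)_R$. I expect the main obstacle to be establishing the intrinsic characterization $L(s) = M(s) \cap \ker(N^{1-s})$ in a way compatible with arbitrary base change, since it is this characterization (rather than the weaker statement that $C_N$ acts on $M(s)/N M(s-2)$) that makes the forward restriction map well-defined; the rest of the argument is a bookkeeping exercise with the skew-symmetry of $N$ under $\varphi$.
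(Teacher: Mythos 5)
Your proof is correct and is essentially the argument the paper has in mind: the paper simply cites Jantzen's proof over a field (restriction to the lowest-weight spaces $L(s)$ and reconstruction along the $N$-strings) and asserts it works over $\O$, and your base-change-compatible characterization $L(s)=M(s)\cap\ker(N^{1-s})$ together with the block structure of $\varphi_\sigma$ is exactly what justifies that assertion in the relative setting.
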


The corresponding statement over a field is \cite[\S3.8 Proposition 2, 3]{jantzen04}: the proof is the same.

\begin{example} \label{ex:spcentralizer}
 Let $G = \Sp_{m}$.  Unraveling when $\psi_s$ is symmetric or alternating, we see that
 \[
  C_N \simeq \prod_{s \leq 0; s \, \text{odd}} \Orth(L(s),\psi_s) \times \prod_{s \leq 0; s \, \text{even}} \Sp(L(s),\psi_s).
 \]
 The special fibers of the symplectic factors are connected, while the orthogonal factors have two connected components in the special fiber.
 Thus there are $2^t$ connected components, where $t$ is the number of odd $s$ for which $L(s) \neq 0$.  For each component, there is a section $g \in C_N(\O)$ meeting that component corresponding to a choice of $\pm \on{Id} \in \Orth(L(s),\psi_s)$ for each odd $s$ with $L(s) \neq 0$.   The connected components of $Z_G(N)$ are the same as those for $C_N$ by Fact~\ref{fact:centralizerstructure}.
\end{example}

\begin{example} \label{ex:socentralizer}
 Let $G = \Orth_m$.  We likewise see that
  \[
  C_N \simeq \prod_{s \leq 0; s \, \text{even}} \Orth(L(s),\psi_s) \times \prod_{s \leq 0; s \, \text{odd}} \Sp(L(s),\psi_s) .
 \]
Again there are $2^t$ connected components of $Z_G(N)$, where $t$ is the number of even $s$ for which $L(s) \neq 0$. 
 
 Now suppose that $G = \SO_m$.  The elements $N$ we considered in this section are representatives for some of the nilpotent orbits of $\SO_m$.  The group $C_N$ has the same structure as for $G = \Orth_m$, except we require that the overall determinant be $1$; this has $2^{t-1}$ connected components.  Though $\SO_m$ has more nilpotent orbits than $\Orth_m$, according to Remark~\ref{rmk:somorbits} their representatives are conjugate by an element $\Orth_m(k)$ with determinant $-1$ to the representatives constructed in Proposition~\ref{prop:symorthreps}.  This shows that there are sections $g \in C_N(\O)$ meeting each component.
\end{example}

\begin{remark} \label{rmk:phi}
Suppose $q$ is a square in $\O^\times$.  For use in the proof of Proposition~\ref{prop:smoothnessmrtame}, we need the existence of an element $\Phi \in G(\O)$ such that $\ad_G(\Phi) N_\sigma = q N_\sigma$.  If $\alpha^2 = q$, taking $\Phi = \lambda(\alpha)$ would work: $\Phi$ would scale $N_\sigma^j v_i \in  M(s)$ by $\alpha^s$, and $N_\sigma$ increases the degree by $2$.

This $\Phi$ is a version for symplectic and orthogonal groups of the diagonal matrix denoted $\Phi(\sigma,a,q)$ whose diagonal entries are increasing powers of $q$ used in \cite[\S2.3]{taylor08}.  There it is checked that $\ad_G(\Phi(\sigma,a,q)) N_\sigma = q N_\sigma$ where $N_\sigma$ is the nilpotent representative in Jordan canonical form considered in Example~\ref{ex:glnrep} for the partition $\sigma$ of $m$.
\end{remark}

\subsection{Smoothness of Centralizers} 
We now return the case when $G$ is an almost-reductive group over a discrete valuation ring $\O$ with residue field $k$ of \emph{very good} characteristic $p>0$.  Suppose we are given an integral representative $N=N_\sigma \in \g := \Lie G$ for the nilpotent orbit on geometric fibers corresponding to $\sigma \in \C$ as in \eqref{eq:ncondition}: that is, an element such that
\begin{equation*}
N_k \in O_{\kbar,\sigma} \, \text{ and } \, N_K \in O_{\overline{K},\sigma}.  
\end{equation*}
Proposition~\ref{prop:symorthreps} provides such $N$ in symplectic and orthogonal cases when $\sqrt{-1}, \sqrt{2} \in \O^\times$.  
We wish to check that the $Z_G(N)$ is smooth over $\O$.  This $N$ satisfies
\begin{equation} \label{eq:smoothfibers}
Z_{G_K}(N_K)  \text{ and }  Z_{G_k}(N_k) \text{ are smooth of the same dimension.}
\end{equation}

\begin{remark}
Some assumption on $N$ is essential.  Otherwise $N_{\overline{K}}$ and $N_{\overline{k}}$ can lie in different nilpotent orbits (in terms of the combinatorial characteristic-free classification of geometric orbits), and so $Z_{G_K}(N_K)$ and $Z_{G_k}(N_k)$ could have different dimensions, in which case $Z_G(N)$ cannot be $\O$-flat.  An example of this is the element $N_2$ in Example~\ref{ex:changeorbit}.
\end{remark}

Now we wish to check the conditions necessary to apply Corollary~\ref{cor:smoothnesscriterion}.  
We define
\[
  A(N) = Z_{G_k}(N_k)(k) / Z_{G_k}(N_k)^\circ(k),
\]
and study when the following holds:
\begin{equation} \label{dagger}
 \text{each element of } A(N)  \text{ arises from some }  s \in Z_G(N)(\O). 
\end{equation}
Note that this checks the criterion in Corollary~\ref{cor:smoothnesscriterion} as $$Z_{G_k}(N_k)(k) / Z_{G_k}(N_k)^\circ(k) =  \left( Z_{G_{k}}(N_k)/Z_{G_k}(N_k)^\circ \right) (k)$$ by Lang's theorem, and since the irreducible components of $Z_{G_k}(N_K)$ are the same as connected components by smoothness.

We are free to make a local flat extension of $\O$, as it suffices to check flatness after such an extension.  In particular, it suffices to check \eqref{dagger} when $\O$ is Henselian and $k$ is algebraically closed, and $\sqrt{-1}, \sqrt{2} \in \O$.  Examples \ref{ex:spcentralizer} and \ref{ex:socentralizer} give such sections when $G = \Sp_m$ or $G=\SO_m$.  We will use these cases to get a result for similitude groups. 

Let $\pi : \widetilde{G'} \to G'$ be the simply connected central cover of the derived group $G'$ over $\O$.  
As $p$ is very good, $\widetilde{G'}$ and $G'$ have isomorphic Lie algebras via $\pi$ and $\Lie G'$ is a direct factor of $\Lie G$ with complement $\Lie(Z_G)$, so we may abuse notation and view $N$ as an element of all of these Lie algebras over $\O$.

Let $S$ be a (split) maximal central torus in $G$.  Consider the isogeny $S \times \widetilde{G'} \to G$.  As $S$ acts trivially on $N$, we see that $S \times Z_{\widetilde{G'}}(N)$ is the preimage of $Z_{G'}(N)$ under this isogeny.  As $p$ is very good for $G$, we obtain finite \'{e}tale surjections
\[
  Z_{\widetilde{G'}}(N) \to Z_{G'}(N) \quad \text{and} \quad S \times Z_{\widetilde{G'}}(N) \to Z_G(N)
 \]
over $\O$.

\begin{lem} \label{lem:reduction1}
The condition \eqref{dagger} holds for $\widetilde{G'}$ if and only if \eqref{dagger} holds for $G$.
\end{lem}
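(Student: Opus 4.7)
The plan is to translate \eqref{dagger} into a statement about component groups and then exploit the \'{e}taleness of the isogeny kernel to transfer $\O$-sections. First I would enlarge $\O$ to assume it is Henselian with algebraically closed residue field $k$ containing $\sqrt{-1}, \sqrt{2}$, which is allowed since \eqref{dagger} is local for flat extensions. The kernel $\mu \subset S \times Z(\widetilde{G'})$ of the isogeny $S \times \widetilde{G'} \to G$ is a finite \'{e}tale $\O$-group scheme, a consequence of the finite \'{e}taleness of the surjection $S \times Z_{\widetilde{G'}}(N) \to Z_G(N)$ recorded before the lemma (which in turn rests on $p$ being very good). Since $\O$ is strictly Henselian, $\mu$ is a constant $\O$-group scheme, so $\mu(\O) = \mu(k)$ and $H^1(\O,\mu) = 0$; the vanishing gives a surjection $(S \times Z_{\widetilde{G'}}(N))(\O) \twoheadrightarrow Z_G(N)(\O)$ via the descent sequence for the $\mu$-torsor defined by $\pi$.

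Next I would record the component-group comparison. Since $S_k$ is connected and $\pi \colon S \times Z_{\widetilde{G'}}(N) \to Z_G(N)$ is a finite \'{e}tale surjection, writing $A_H(N)$ for the analogue of $A(N)$ attached to $H$, the induced map
\[
  A_{\widetilde{G'}}(N) \twoheadrightarrow A_G(N)
\]
is surjective, with kernel equal to the image of $\mu(k)$ under the composition $\mu \hookrightarrow S \times Z(\widetilde{G'}) \to Z(\widetilde{G'}) \subset Z_{\widetilde{G'}}(N)$ followed by reduction to $A_{\widetilde{G'}}(N)$.

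With these tools in hand both directions are straightforward. Suppose first that \eqref{dagger} holds for $\widetilde{G'}$. Given $\bar{c} \in A_G(N)$, choose a preimage $c \in A_{\widetilde{G'}}(N)$ by the surjectivity above, produce $\tilde{s} \in Z_{\widetilde{G'}}(N)(\O)$ with $\tilde{s}_k$ in the class $c$, and push forward to $\pi(1,\tilde{s}) \in Z_G(N)(\O)$, whose reduction lies in $\bar{c}$. Conversely, assume \eqref{dagger} holds for $G$. Given $c \in A_{\widetilde{G'}}(N)$, apply the hypothesis to its image $\bar{c}$ to obtain $s \in Z_G(N)(\O)$; lift $s$ through the $\O$-point surjectivity to some $(t,\tilde{s}_0) \in (S \times Z_{\widetilde{G'}}(N))(\O)$; the class $c'$ of $(\tilde{s}_0)_k$ maps to $\bar{c}$, so $c(c')^{-1}$ lies in the kernel described above and is realized by some $z \in \mu(k) = \mu(\O)$, after which $z \cdot \tilde{s}_0 \in Z_{\widetilde{G'}}(N)(\O)$ reduces into $c$.

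The main obstacle is the reverse implication, where one must simultaneously lift across the isogeny on $\O$-points and correct by a $\mu$-translate to land in the prescribed component; both steps hinge on $\mu$ being \'{e}tale, so that strict Henselianity supplies both $H^1(\O,\mu) = 0$ and the identification $\mu(\O) = \mu(k)$. That \'{e}taleness is exactly what the very-good-characteristic hypothesis provides in the classical cases of interest.
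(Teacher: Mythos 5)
Your proof is correct and runs along essentially the same lines as the paper's: both proofs pass to the Henselian strictly-local setting, exploit that $S \times Z_{\widetilde{G'}}(N) \to Z_G(N)$ is finite \'{e}tale (since $p$ is very good), use connectedness of $S_k$ to identify the relevant component groups, and then transfer $\O$-points across the isogeny. The forward direction is identical. In the reverse direction you lift the $\O$-point $s$ arbitrarily via $H^1(\O,\mu)=0$ and then correct the component by a $\mu$-translate, which requires identifying the kernel of $A_{\widetilde{G'}}(N) \to A_G(N)$ with the image of $\mu(k)$; the paper achieves the same thing more directly by first choosing a $k$-point in the fiber over $s_k$ lying in the desired component $C'$ (possible because $S_k \times C' \to C$ is finite \'{e}tale surjective over an algebraically closed field) and then invoking the Henselian lifting property to promote $s$ to the $\O$-point of $S \times Z_{\widetilde{G'}}(N)$ with that prescribed reduction. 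Your correction-by-$\mu$-translate step is correct but is extra bookkeeping the paper avoids; one small notational caveat is that the element you denote $z \cdot \tilde{s}_0$ should be $z_{G'} \cdot \tilde{s}_0$, where $z_{G'}$ is the $\widetilde{G'}$-component of $z \in \mu(\O) \subset (S \times \widetilde{G'})(\O)$.
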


\begin{proof}
Assume $\widetilde{G'}$ satisfies \eqref{dagger}.  Pick a connected component $C$ of $Z_{G_k}(N_k)$.  The preimage of $C$ under $S \times Z_{\widetilde{G'}}(N) \to Z_G(N)$ is a union of $k$-fiber components of the form $S_k \times C'$ where $C'$ is a connected component of $Z_{\widetilde{G'_k}}(N_k)$.  By assumption, there exists $s \in Z_{\widetilde{G'}}(N)(\O)$ meeting any such $C'$.  The image of $(1,s)$ is a point of $Z_G(N)(\O)$ meeting $C$.

Conversely, assume $G$ satisfies \eqref{dagger}.  Pick a connected component $C'$ of $Z_{\widetilde{G'_k}}(N_k)$.  Under $S \times Z_{\widetilde{G'}}(N) \to Z_G(N)$, $S_k \times C'$ maps onto a connected component $C$ of $Z_{G_k}(N_k)$.  By assumption, there exists $s \in Z_G(N)(\O)$ such that $s_k \in C$.  As $k$ is algebraically closed, there is $s'_k \in (S \times Z_{\widetilde{G'}}(N))(k)$ lifting $s_k$ and lying in $C'$.  As $S \times Z_{\widetilde{G'}}(N) \to Z_G(N)$ is a finite \'{e}tale cover and $\O$ is Henselian, there exists $s' \in (S \times Z_{\widetilde{G'}}(N))(\O)$ lifting $s$ and reducing to $s'_k$.
\end{proof}

For example, this lets us pass between $\Sp_{2n}$ and $\GSp_{2n}$ by way of the projective symplectic group.

\begin{prop} \label{prop:similitudecases}
For $G$ a symplectic or orthogonal similitude group, and $N = N_\sigma \in \g$ the element satisfying \eqref{eq:ncondition} given by Proposition~\ref{prop:symorthreps} for an admissible partition $\sigma$, the centralizer $Z_G(N)$ is smooth over $\O$.  
\end{prop}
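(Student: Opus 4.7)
The plan is to apply Corollary~\ref{cor:smoothnesscriterion}, which reduces $\O$-smoothness of $Z_G(N)$ to smoothness of its geometric fibers together with the three hypotheses of Lemma~\ref{lem:flatnesscriterion}. Since the scheme-theoretic centralizer commutes with base change, the fibers of $Z_G(N)$ are centralizers of nilpotents in $G$ over fields, which Fact~\ref{fact:gspcentralizers} shows are smooth in the symplectic or orthogonal similitude setting; in particular each fiber is reduced, equidimensional, and non-empty. The common fiber dimension is given by \eqref{eq:ncondition}: both $N_K$ and $N_k$ lie in the orbit labeled $\sigma$, and in good characteristic orbit dimensions are determined by the combinatorial datum $\sigma$.

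The real content is thus the section hypothesis of Lemma~\ref{lem:flatnesscriterion}, which via Fact~\ref{fact:centralizerstructure} (identifying components of $Z_{G_k}(N_k)$ with those of $(C_N)_k$) becomes the condition \eqref{dagger}: every connected component of $Z_{G_k}(N_k)$ should meet the special fiber of some section $s \in Z_G(N)(\O)$. Since $\O$-flatness may be checked after any faithfully flat local base change, I first replace $\O$ by a local extension that is Henselian, has algebraically closed residue field, and contains $\sqrt{-1}$ and $\sqrt{2}$, which is harmless. Under these hypotheses Examples~\ref{ex:spcentralizer} and~\ref{ex:socentralizer} verify \eqref{dagger} directly for $G = \Sp_m$, $\Orth_m$, and $\SO_m$: the explicit product decomposition $C_N \simeq \prod_{s} \underline{\Aut}(L(s), \psi_s)$ lets one exhibit sections by choosing $\pm \mathrm{Id}$ in each orthogonal factor.

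To pass from these semisimple cases to the similitude groups named in the proposition, I invoke Lemma~\ref{lem:reduction1}. For $G = \GSp_m$, the derived subgroup of $G^\circ = G$ is the simply connected group $\Sp_m$, so (dagger) for $\Sp_m$ gives (dagger) for $\GSp_m$ at once. For $G = \GO_m$, which may be disconnected, I expect to apply the lemma twice: first to $\SO_m$ itself, transferring (dagger) from $\SO_m$ to its simply connected cover $\mathrm{Spin}_m$; and then to the connected similitude group $G^\circ = (\GO_m)^\circ$, whose derived subgroup is again $\SO_m$ with simply connected cover $\mathrm{Spin}_m$, yielding (dagger) for $G^\circ$.

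The step I expect to require the most care is reaching the components of $Z_{\GO_m}(N)$ that meet the non-identity component of $\GO_m$, since the isogeny $S \times \widetilde{G'} \to G$ underlying Lemma~\ref{lem:reduction1} only covers $G^\circ$. The plan is to multiply the sections already constructed for $G^\circ$ by an element of $\Orth_m(\O)$ of determinant $-1$ centralizing $N$; such an element exists because Example~\ref{ex:socentralizer} provides sections meeting every component of $Z_{\Orth_m}(N)$, in particular components of determinant $-1$. Combining these observations gives \eqref{dagger} for every similitude group in the statement, and Corollary~\ref{cor:smoothnesscriterion} then concludes the smoothness of $Z_G(N)$ over $\O$.
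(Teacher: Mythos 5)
Your proof is correct and follows the same overall route as the paper (smooth fibers via Fact~\ref{fact:gspcentralizers}, equidimensionality from the combinatorial classification, reduction of flatness to the section-hitting condition~\eqref{dagger} via Corollary~\ref{cor:smoothnesscriterion} and Fact~\ref{fact:centralizerstructure}, then Lemma~\ref{lem:reduction1} plus Examples~\ref{ex:spcentralizer} and \ref{ex:socentralizer}). But you have in fact supplied two details that the paper's terse proof suppresses, and the second one fills a genuine gap. First, you correctly note that Lemma~\ref{lem:reduction1} transfers~\eqref{dagger} between $G$ and the \emph{simply connected} cover $\widetilde{G'}$, so to pass from $\SO_m$ (where Example~\ref{ex:socentralizer} checks~\eqref{dagger} directly) to $\GSO_m = (\GO_m)^\circ$ one really does apply the lemma twice through $\mathrm{Spin}_m$, whereas the paper's proof reads as if it reduces directly to $\SO_m$. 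Second, and more substantively: for $G = \GO_{2n}$, which is disconnected, the map $S \times \widetilde{G'} \to G$ in the setup of Lemma~\ref{lem:reduction1} has image $G^\circ$ only, so $S \times Z_{\widetilde{G'}}(N) \to Z_G(N)$ is \emph{not} surjective onto the components of $Z_{G_k}(N_k)$ lying outside $G^\circ_k$; the "preimage of $C$" step in the proof of Lemma~\ref{lem:reduction1} is vacuous for such $C$, so the lemma as stated does not yield~\eqref{dagger} for disconnected $G$. Your remedy — translating the $\O$-sections already constructed in $Z_{G^\circ}(N)(\O)$ by a fixed point $h \in Z_{\Orth_m}(N)(\O)$ with $\det h_k = -1$, whose existence is guaranteed by Example~\ref{ex:socentralizer} whenever $Z_{(\Orth_m)_k}(N_k)$ is not contained in $(\SO_m)_k$ (and otherwise there is nothing to prove) — is exactly what is needed and closes the gap: left-multiplication by $h_k$ gives a bijection between the components of $Z_{G_k}(N_k)$ inside $G^\circ_k$ and those outside, so every component is hit by some $\O$-section. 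In short, your argument is not merely the same as the paper's but is a more careful and complete version of it.
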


\begin{proof}
By Fact~\ref{fact:gspcentralizers}, $Z_{G_k}(N_k)$ and $Z_{G_K}(N_K)$ are smooth.  By the classification of nilpotent orbits over algebraically closed fields, the dimension of the orbit only depends on the combinatorial classification for the orbit in very good characteristic and in characteristic $0$, so these fibers are equidimensional of the same dimension.  By Corollary~\ref{cor:smoothnesscriterion}, it suffices to find $s \in Z_G(N)(\O)$ meeting any connected component of $Z_{G_k}(N_k)$.  

Using Lemma~\ref{lem:reduction1}, we reduce checking \eqref{dagger} to the cases of $\Sp_m$ and $\SO_m$, covered by Examples \ref{ex:spcentralizer} and \ref{ex:socentralizer}.
\end{proof}

\begin{remark} \label{rmk:glncomp}
Consider a nilpotent orbit of $\GL_n$ with representative $N$ given in Example~\ref{ex:glnrep}.  As $Z_{G_k}(N_k)$ is connected \cite[Proposition 3.10]{jantzen04}, the identity section shows \eqref{dagger} holds.  This shows $Z_G(N)$ is smooth.
\end{remark}

\begin{remark}
It is not hard to extend the above argument to work for groups such that all the irreducible factors of the root system are of classical type.  For the exceptional groups, one could find $N$ as in Remark~\ref{rmk:rootreps} and attempt to check \eqref{dagger} holds by hand (there are finitely many cases).  A conceptual approach would be preferable.
\end{remark}

\begin{remark}
McNinch analyzes the centralizer of an ``equidimensional nilpotent'' in \cite{mcninch08}.  An \emph{equidimensional nilpotent} is an element $N \in \g$ such that $N_K$ is nilpotent and the dimension of the special and generic fibers of $Z_G(N)$ are the same.  \cite[\S 5.2]{mcninch08} claims that such $Z_G(N)$ are $\O$-smooth because the fibers are smooth of the same dimension.  This deduction is incorrect: it relies on \cite[2.3.2]{mcninch08} which uses the wrong definition of an equidimensional morphism and thereby incorrectly applies \cite[Exp. II, Prop 2.3]{sga1}.

According to \cite[Exp. II, Prop 2.3]{sga1} (or \cite[\S13.3, 14.4.6, 15.2.3]{egaiv3}), for a Noetherian scheme $Y$, a morphism $f : X \to Y$ locally of finite type, and points $x \in X$ and $y = f(x)$ with $\mathscr{O}_y$ normal, $f$ is smooth at $x$ if and only if $f$ is equidimensional at $x$ and $f^{-1}(y)$ is smooth over $k(y)$ at $x$.  But by definition in \cite[13.3.2]{egaiv3}, an \emph{equidimensional} morphism is more than just a morphism all of whose fibers are of the same dimension (the condition checked in \cite[2.3.2]{mcninch08}): a locally finite type morphism $f$ is called equidimensional of dimension $d$ at $x \in X$ when there exists an open neighborhood $U$ of $x$ such that for every irreducible component $Z$ of $U$ through $x$, $f(Z)$ is dense in some irreducible component of $Y$ containing $y$ and for all $x' \in U$ the fiber $f^{-1} ( f(x')) \cap U$ has all irreducible components of dimension $d$.

This is much stronger than the fibers simply being of the same dimension.  To see the importance of the extra conditions, consider a discrete valuation ring $\O$ with field of fractions $K$ and residue field $k$, and the morphism from $X$, the disjoint union of $\Spec K$ and $\Spec k$, to $Y=\Spec \O$.  The fibers are of the same dimension (zero) and smooth but the morphism is not flat.  This morphism is also not equidimensional at $\Spec k$: the only irreducible component of $X$ containing $\Spec k$ is the point itself, with image the closed point of $\Spec \O$.  This is not dense in $\Spec \O$, the only irreducible component of the only open set containing the closed point of $\Spec \O$.

The smoothness of centralizers of an equidimensional pure nilpotent is important to proving the main results of \cite{mcninch08}.  In particular, the results in \S6 and \S7 in \cite{mcninch08} crucially rely on the smoothness of the centralizers of such nilpotents, leaving a gap in the proof of Theorem B in \cite{mcninch08} concerning the component group of centralizers.  The method we have discussed here reverses this, understanding the geometric component group well enough to \emph{produce} sufficiently many $\O$-valued points in order to deduce smoothness of the centralizer in classical cases in very good characteristic via Lemma~\ref{lem:flatnesscriterion}.  
\end{remark}

\section{Minimally Ramified Deformations: Tame Case} \label{sec:mrtame}

In this section, we will generalize the tamely ramified case of the minimally ramified deformation condition introduced in \cite[\S 2.4.4]{cht08} for $\GL_n$ to symplectic and orthogonal groups.  We also explain why another more immediate notion based on parabolic subgroups, giving the same deformation condition for $\GL_n$, is \emph{not liftable} in general (even for $\GSp_4$).  

\subsection{Passing between Unipotents and Nilpotents} \label{sec:passing}
As before, $G$ is either $\GSp_m$ or $\GO_m$ (or $\GL_m$ to recover the results of \cite[\S2.4.4]{cht08}) over the ring of integers $\O$ in a $p$-adic field with residue field $k$ of characteristic $p>0$. 
As always, we assume that $p$ is very good for $G_k$ (i.e. $p\neq 2$).  Let $\g = \Lie(G)$.

As in \S\ref{sec:purenilplift}, we work with a pure nilpotent $N_\sigma \in \g$ for which $Z_G(N_\sigma)$ is $\O$-smooth, $(N_\sigma)_{\overline{K}} \in O_{\overline{K},\sigma}$, and $(N_\sigma)_{\overline{k}} \in O_{\overline{k},\sigma}$.  Define $\Nbar := (N_\sigma)_k$. 
We studied deformations of $\Nbar$ in \S\ref{sec:purenilplift}, but will ultimately want to analyze deformations of Galois representations which take on unipotent values at certain elements of a local Galois group.  Thus, we need a way to pass between unipotent and nilpotent elements.  For classical groups, we can use a truncated version of the exponential and logarithm maps:

\begin{fact} \label{fact:expmap}
Suppose that $p \geq m$ and that $R$ is an $\O$-algebra.  If $A \in \Mat_m(R)$ has characteristic polynomial $x^m$ then
\[
 \exp(A) := 1 + A +  A^2/2 + \ldots + A^{m-1}/ (m-1)!
\]
has characteristic polynomial $(x-1)^m$.  If $B \in \Mat_m(R)$ has characteristic polynomial $(x-1)^m$ then
\[
 \log(B) := (B-1) - (B-1)^2/2 + \ldots + (-1)^m (B-1)^{m-1} / (m-1)
\]
has characteristic polynomial $x^m$.  Furthermore for $C \in \GL_m(R)$ and an integer $q$, we have 
\begin{multicols}{2}
\begin{itemize}
 \item $\exp(C A C^{-1}) = C \exp(A) C^{-1}$ \item $\log(C B C^{-1}) = C \log(B) C^{-1}$ \item $\log( \exp(A)) = A$ \item  $\exp(\log(B)) = B$ \item $\exp(q A) = \exp(A)^q$ \item $\log(B^q) = q \log(B)$
 \end{itemize}
\end{multicols}
\end{fact}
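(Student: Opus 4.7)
The plan is to break Fact~\ref{fact:expmap} into three parts: well-definedness of the truncated series, the characteristic polynomial assertions, and the list of algebraic identities.

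\emph{Well-definedness.} The hypotheses on characteristic polynomials give, by Cayley--Hamilton, $A^m = 0$ and $(B - 1)^m = 0$, so the series defining $\exp(A)$ and $\log(B)$ terminate with the $A^{m-1}$ and $(B-1)^{m-1}$ terms respectively. The denominators $1!, 2!, \ldots, (m-1)!$ involve only primes strictly less than $p$ (since $p \geq m$), so are units in $\O$ and hence in $R$; the truncated series are therefore bona fide elements of $\Mat_m(R)$.

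\emph{Characteristic polynomials.} The key claim is $\det(xI - \exp(A)) = (x - 1)^m$, with the $\log(B)$ statement being parallel. I would prove this via a universal argument. Set $S = \ZZ[1/(m-1)!][a_{ij}]$ with generic matrix $A_{\on{univ}} = (a_{ij})$, and let $I \subset S$ be the ideal generated by the non-leading coefficients of $\det(xI - A_{\on{univ}}) - x^m$; then $\Spec(S/I)$ is the scheme of matrices with characteristic polynomial $x^m$. It suffices to establish the identity $\det(xI - \exp(A_{\on{univ}})) = (x-1)^m$ in $(S/I)[x]$, since the desired conclusion for any particular $A$ follows by base change along $S/I \to R$ sending $a_{ij}$ to the corresponding entry of $A$. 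To verify the universal identity, I would check it at geometric points of $\Spec(S/I)$ over algebraically closed fields $F$ of characteristic $0$ or of characteristic $\geq m$: each such point corresponds to a nilpotent matrix over $F$, which Jordan form conjugates to a strictly upper-triangular shape, and a direct block computation then shows $\exp(A)$ is upper-unitriangular with characteristic polynomial $(x-1)^m$. The upgrade from this pointwise identity to an identity in $(S/I)[x]$ relies on reducedness of the nilpotent cone in good characteristic, which is classical but is the heart of the technical content. The argument for $\log$ proceeds symmetrically over the universal unipotent matrix scheme.

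\emph{Algebraic identities.} The conjugation formulas $\exp(C A C^{-1}) = C\exp(A) C^{-1}$ and $\log(CBC^{-1}) = C\log(B) C^{-1}$ are immediate termwise from $(C A C^{-1})^k = C A^k C^{-1}$, once one notes that characteristic polynomials are conjugation-invariant so the left sides make sense. The inverse identities $\log(\exp A) = A$ and $\exp(\log B) = B$ hold as formal power series identities over $\QQ$; the truncations affect only monomials of degree $\geq m$ in the nilpotent/unipotent variable, all of which vanish on $A$ and on $B - 1$, so the formal identities descend to polynomial identities on matrices. Finally $\exp(qA) = \exp(A)^q$ and $\log(B^q) = q\log(B)$ follow the same way from the universal formal power series identities, once one verifies that $qA$ has characteristic polynomial $x^m$ via the scaling relation $\det(xI - qA) = q^m \det((x/q)I - A)$ (valid as an identity in $\ZZ[x,q]$ by polynomial extension from the invertible-$q$ case).

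The hardest step is the universal coordinate-ring identity in the second part: transferring pointwise Jordan-form calculations over algebraically closed fields to an algebraic identity in the coordinate ring of the nilpotent cone, which requires appealing to reducedness of that cone uniformly across residue characteristics satisfying $p \geq m$.
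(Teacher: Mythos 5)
Your proposal is correct in outline, but it takes a genuinely different (and much heavier) route than the paper, whose entire proof is the citation to \cite[Lemma 2.4]{taylor08} together with the observation that all powers $A^i$, $(B-1)^i$ with $i \geq m$ vanish by Cayley--Hamilton and all denominators are units since $p \geq m$, so everything reduces to characteristic-zero power-series identities. Your treatment of the well-definedness and of the algebraic identities is exactly that reduction and is fine. For the characteristic-polynomial assertions you invoke a universal-scheme argument; this can be made to work, but be precise about what ``reducedness of the nilpotent cone in good characteristic'' gives you: reducedness of the \emph{fibers} of $\Spec(S/I) \to \Spec \ZZ[1/(m-1)!]$ does not by itself upgrade a pointwise identity to one in $(S/I)[x]$ --- you need $S/I$ itself to be reduced, which one gets by combining reducedness of the characteristic-zero fiber with flatness of $S/I$ over $\ZZ[1/(m-1)!]$ (e.g.\ miracle flatness: $S/I$ is a complete intersection, hence Cohen--Macaulay, with fibers equidimensional of dimension $m^2-m$), plus the Jacobson property to reduce to closed points. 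You should also be aware of a more elementary argument, closer in spirit to the paper's one-line reduction, that avoids the geometry entirely: Newton's identities (with integer coefficients in the direction expressing power sums in terms of elementary symmetric functions) show that $c_1(A)=\cdots=c_m(A)=0$ forces $\tr(A^i)=0$ for all $i \geq 1$, hence $\tr\bigl((\exp(A)-1)^j\bigr)=0$ for all $j \geq 1$; running Newton's identities the other way gives $k\,c_k(\exp(A)-1)=0$ for $k<m$, where $k$ is a unit because $k < m \leq p$, and the top coefficient is handled directly by $\det(\exp(A)-1)=\det(A)\det(1+A/2+\cdots)=0$ --- a point worth isolating since $p=m$ is allowed, so one cannot divide by $m$. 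The same argument applies to $\log(B)$. So: accept your proof as a valid alternative, but make the flatness/reducedness of the arithmetic nilpotent cone explicit if you keep the geometric route.
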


This is \cite[Lemma 2.4]{taylor08}.  The key idea is that because all the higher powers of $A$ and $B-1$ vanish and all of the denominators appearing are invertible as $p \geq m$, we can deduce these facts from results about the exponential and logarithm in characteristic zero.  

Suppose $J$ is the matrix for a perfect symmetric or alternating pairing over $R$.

\begin{cor} \label{cor:exppairing}
For $A$ and $B$ as in Fact~\ref{fact:expmap} with $\exp(A) = B$, $A^T J + J A =0$  if and only if $B^T J B = J$.
\end{cor}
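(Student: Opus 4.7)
The idea is to translate both conditions into statements about conjugation by $J$ and then use the functoriality of the truncated $\exp$ and $\log$ proved in Fact~\ref{fact:expmap}. Since $J$ is invertible (the pairing is perfect), the condition $A^T J + J A = 0$ is equivalent to $J A J^{-1} = -A^T$, and $B^T J B = J$ is equivalent to $J B J^{-1} = (B^T)^{-1}$. Note also that transposing the defining series for $\exp$ term by term gives $B^T = \exp(A^T)$, which has characteristic polynomial $(x-1)^m$ (so $(B^T)^{-1}$ also has characteristic polynomial $(x-1)^m$, as required for $\log$ to apply).

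Assume first that $A^T J + JA = 0$, i.e.\ $JAJ^{-1} = -A^T$. Applying the conjugation identity $\exp(CAC^{-1}) = C\exp(A)C^{-1}$ from Fact~\ref{fact:expmap} with $C = J$, and then using $\exp(-A^T) = \exp(A^T)^{-1}$ (the $q = -1$ case of $\exp(qA) = \exp(A)^q$), we get
\[
 J B J^{-1} \;=\; J\exp(A)J^{-1} \;=\; \exp(JAJ^{-1}) \;=\; \exp(-A^T) \;=\; \exp(A^T)^{-1} \;=\; (B^T)^{-1},
\]
which is equivalent to $B^T J B = J$.

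For the converse, assume $B^T J B = J$, i.e.\ $JBJ^{-1} = (B^T)^{-1}$. Using the conjugation identity for $\log$ and the $q=-1$ case of $\log(B^q) = q\log(B)$ (both from Fact~\ref{fact:expmap}), applied to the unipotent element $B^T = \exp(A^T)$, we compute
\[
 JAJ^{-1} \;=\; J\log(B)J^{-1} \;=\; \log(JBJ^{-1}) \;=\; \log\bigl((B^T)^{-1}\bigr) \;=\; -\log(B^T) \;=\; -A^T,
\]
which rearranges to $A^T J + JA = 0$. The only thing to verify is that the formal manipulations are legal, which reduces entirely to checking the hypotheses (characteristic polynomial $x^m$ or $(x-1)^m$, and $p \geq m$) needed to invoke Fact~\ref{fact:expmap}; these are immediate from the observations above. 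There is no essential obstacle, as the hard work is already encoded in Fact~\ref{fact:expmap}.
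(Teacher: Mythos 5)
Your proof is correct and follows essentially the same route as the paper: reduce to $JAJ^{-1}=-A^T$ versus $JBJ^{-1}=(B^T)^{-1}$, note $\exp(A^T)=\exp(A)^T$, and use the conjugation and $q$-power compatibilities from Fact~\ref{fact:expmap}. The paper states the equivalence in one stroke via $\exp(JAJ^{-1})=JBJ^{-1}$ and $\exp(-A^T)=(B^T)^{-1}$ (with injectivity of $\exp$ implicit from $\log\circ\exp=\mathrm{id}$), whereas you spell out both directions separately using $\exp$ and $\log$ respectively; the content is the same.
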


\begin{proof}
Directly from the definitions we see that $\exp(A^T) = \exp(A)^T$.  Observe that 
$$\exp( J A J^{-1}) = J B J^{-1} \quad \text{and} \quad \exp(-A^T) = (B^T )^{-1}.$$
Thus $JA J^{-1} = - A^T$ if and only if $(B^T)^{-1} = J B J^{-1}$.
\end{proof}

We shall use this exponential map to convert pure nilpotents into unipotent elements.  Let $R$ be a coefficient ring over $\O$.  By Definition~\ref{defn:purenilp}, any pure nilpotent $N \in \Nil_{\Nbar}(R)$ is $G(R)$-conjugate to $N_\sigma$, so it has characteristic polynomial $x^m$.  Denoting the derived group of $G$ by $G'$, any nilpotent element of $\g$ lies in $(\g') = (\Lie G')$, so $N J + J N = 0$ (and not just $NJ + J N = \lambda J$ for some $\lambda \in \O$).  Thus, Corollary~\ref{cor:exppairing} shows that $\exp(N) \in G(R)$.  This gives an exponential map
\begin{equation} \label{eq:exponentialmap}
 \exp : \Nil_{\Nbar} \to G
\end{equation}
such that for $g \in \Ghat(R)$, $N \in \Nil_{\Nbar}(R)$, and $q \in \ZZ$ we have $\exp(q N) = \exp(N)^q$ and $g \exp(N) g^{-1} = \exp( \Ad_G(g) N)$.

\begin{remark}
This is a realization over $\O$ of a special case of the Springer isomorphism identifying the nilpotent and unipotent varieties in very good characteristic.  For later purposes, we will need that the identification is compatible with the multiplication in the sense that $\exp(q A) = \exp(A)^q$.  In the case of $\GL_m$, a Springer isomorphism that works in any characteristic is given by $X \to 1 +X$ for nilpotent $X$, but this is not compatible with multiplication.
\end{remark}

\subsection{Minimally Ramified Deformations} \label{sec:mrd}
As before, $G$ is $\GSp_m$, $\GO_m$ or $\GL_m$ over the ring of integers $\O$ of a $p$-adic field with residue field $k$ with $p > m$.  Let  $L$ be a finite extension of $\QQ_\ell$ (with $\ell \neq p$), and denote its absolute Galois group by $\Gamma_L$.  Consider a representation $\rhobar : \Gamma_L \to G(k)$.  We wish to define a (large) smooth deformation condition for $\rhobar$ generalizing the minimally ramified deformation condition for $\GL_n$ defined in \cite[\S2.4.4]{cht08}.  In this section we do so for a special class of tamely ramified representations.  This requires making an \'{e}tale local extension of $\O$, which will be harmless for our purposes.

Recall that $\Gamma_L^\tame$, the Galois group of the maximal tamely ramified extension of $L$, is isomorphic to the semi-direct product
\[
 \ZZhat \ltimes \prod_{p' \neq \ell} \ZZ_{p'}
\]
where $\ZZhat$ is generated by a Frobenius $\phi$ and the conjugation action by $\phi$ on the direct product is given by the cyclotomic character.  We consider representations of $\Gamma_L^\tame$ which factor through the quotient  $\ZZhat \ltimes \ZZ_p$.  
Picking a topological generator $\tau$ for $\ZZ_p$, the action is explicitly given by
\[
 \phi \tau \phi^{-1} = q \tau
\]
where $q$ is the size of the residue field of $L$.  Note $q$ is a power of $\ell$, so it is relatively prime to $p$.  This leads us to study representations of the group $T_q :=  \ZZhat \ltimes \ZZ_p$.

Let $\rhobar : T_q \to G(k)$ be such a representation.  We first claim that $\rhobar(\tau) \in G(k)$ is unipotent.  This element decomposes as a commuting product of semisimple and unipotent elements of $G(k)$.  The order of a semisimple element in $G(k)$ is prime to $p$, while by continuity there is an $r  \geq 0$ such that $\tau^{p^r} \in \ker(\rhobar)$.  Thus $\rhobar(\tau)$ is unipotent.

Informally, a deformation $\rho : T_q \to G(R)$ will be minimally ramified if $\rho(\tau)$ lies in the ``same'' unipotent orbit as $\rhobar(\tau)$.  To make this meaningful over an infinitesimal thickening of $k$, we shall use the notion of pure nilpotents as in Definition~\ref{defn:purenilp} since unipotence and unipotent orbits are not good notions when not over a field.
As $\Nbar := \log(\rhobar(\tau))$  is nilpotent, by Remark~\ref{rmk:43} after making an \'{e}tale local extension of $\O$ we may assume that there exists a pure nilpotent $N_\sigma \in \g$ lifting $\Nbar$ for which $Z_G(N_\sigma)$ is smooth.  Making a further extension if necessary, we may also assume that the unit $q \in \O^\times$ is a square.  We obtain an exponential map $\exp : \Nil_{\Nbar} \to G$ as in \eqref{eq:exponentialmap}.  

\begin{defn} \label{defn:minimallyramifiedtame}
Under our standing assumptions (collected as \ref{assumption1}-\ref{assumption4} in \S\ref{sec:notation}), for a coefficient ring $R$ over $\O$, a continuous lift $\rho : T_q \to G(R)$ of $\rhobar$ is \emph{minimally ramified} if $\rho(\tau) = \exp(N)$ for some $N \in \Nil_{\Nbar}(R)$.
\end{defn}

\begin{example} \label{ex:compatiblecht}
 Take $G = \GL_m$.  Then $X \mapsto 1_m +X$ gives an identification of nilpotents and unipotents.  Up to conjugacy, over algebraically closed fields parabolic subgroups correspond to partitions of $m$ and every nilpotent orbit is the Richardson orbit of such a parabolic.  Let $\rhobar(\tau) - 1_m =: \overline{N}$ correspond to the partition $\sigma = n_1 + n_2 + \ldots + n_r$.  By Example~\ref{ex:glnrep}, the lift $N_\sigma$ of $\Nbar$ is conjugate to a block nilpotent matrix with blocks of size $n_1, n_2, \ldots ,n_r$.  The points $N \in \Nil_{\Nbar}(R)$ are the $\Ghat(R)$-conjugates of $N_\sigma$.  It is clear (since $p>m$) that if $\rho(\tau) \in \Nil_{\Nbar}(R)$ then
 \begin{align}
  \ker (\rho(\tau) - 1_m)^i \tensor{R} k \to \ker(\rhobar(\tau)-1_m)^i
 \end{align}
is an isomorphism for all $i$.  Conversely, repeated applications of \cite[Lemma 2.4.15]{cht08} show that any $\rho(\tau)$ satisfying this collection of isomorphism conditions is $\Ghat(R)$-conjugate to $N_\sigma$.  Thus the minimally ramified deformation condition for $\GL_m$ defined in \cite{cht08} agrees with our definition.  Note that the identification $X \mapsto  1_m +X$  does not satisfy $q X \to (1+X)^q$, so it will not work in our argument.  The proof of \cite[Lemma 2.4.19]{cht08} uses a different method for which this non-homomorphic identification suffices.
\end{example}

The functor of minimally ramified lifts is pro-representable by a ring $R^{\mr \square}_\rhobar$  as it suffices to specify images of $\tau$ and $\phi$ subject to constraints that $\rho(\tau) = \exp(N)$ and $\rho(\phi) \rho(\tau) \rho(\phi)^{-1} = \rho(\tau)^q$. 

\begin{prop} \label{prop:smoothnessmrtame}
Under assumptions \ref{assumption1}-\ref{assumption4}, the lifting ring $R^{\mr \square}_{\rhobar}$ is formally smooth over $\O$ of relative dimension $\dim G_k$.
\end{prop}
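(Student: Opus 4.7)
The plan is to verify formal smoothness of $R^{\mr\square}_\rhobar$ over $\O$ via the infinitesimal lifting criterion, which will identify it with a power series ring over $\O$ whose relative dimension equals the tangent space dimension. A lift $\rho:T_q\to G(R)$ is specified by the pair $(\rho(\tau),\rho(\phi))$ subject to the braid relation $\rho(\phi)\rho(\tau)\rho(\phi)^{-1}=\rho(\tau)^q$, with the minimal-ramification condition constraining only $\rho(\tau)=\exp(N)$ for some $N\in\Nil_{\Nbar}(R)$. Accordingly I would lift $\rho(\tau)$ first using the formal smoothness of $\Nil_{\Nbar}$, and then separately lift $\rho(\phi)$ using the $\O$-smoothness of $Z_G(N_\sigma)$.

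For the lifting step, given a small surjection $R_2\twoheadrightarrow R_1$ with kernel $I$ and a minimally ramified lift $\rho_1$ to $R_1$, I would write $\rho_1(\tau)=\exp(N_1)$, invoke Lemma~\ref{lem:nilpliftable} to lift $N_1$ to some $N_2\in\Nil_{\Nbar}(R_2)$, and set $T_2:=\exp(N_2)$ as the candidate value of $\rho_2(\tau)$. Via Fact~\ref{fact:expmap}, the braid relation $\Phi_2 T_2\Phi_2^{-1}=T_2^q$ translates to the Lie-algebraic identity $\Ad_G(\Phi_2)N_2=qN_2$. To produce at least one $\Phi_2$ satisfying this, I would conjugate the fixed reference element $\Phi\in G(\O)$ of Remark~\ref{rmk:phi} (which satisfies $\Ad_G(\Phi)N_\sigma=qN_\sigma$) by an element $g_2\in\Ghat(R_2)$ realizing $N_2=g_2 N_\sigma g_2^{-1}$, yielding a candidate $\Phi_2^{(0)}:=g_2\Phi g_2^{-1}$.

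The main obstacle will be adjusting this candidate so that it also reduces to $\rho_1(\phi)$ modulo $I$. Both $\rho_1(\phi)$ and the reduction of $\Phi_2^{(0)}$ satisfy $\Ad_G(-)N_1=qN_1$, so their ratio lies in $Z_G(N_1)(R_1)$. Since $Z_G(N_2)$ is a $\Ghat(R_2)$-conjugate of $Z_G(N_\sigma)$, which is $\O$-smooth by Proposition~\ref{prop:similitudecases}, the group scheme $Z_G(N_2)$ is $R_2$-smooth, so the reduction map $Z_G(N_2)(R_2)\to Z_G(N_1)(R_1)$ is surjective; lifting the ratio and multiplying $\Phi_2^{(0)}$ on the appropriate side fixes the discrepancy. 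This is precisely the step in which the centralizer smoothness developed in Section~\ref{sec:centralizers} is indispensable.

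Finally, to compute the relative dimension, apply the infinitesimal argument to $R_2=k[\epsilon]$, $R_1=k$, $\rho_1=\rhobar$: every first-order deformation $N\in\Nil_{\Nbar}(k[\epsilon])$ of $\Nbar$ admits a compatible infinitesimal lift of $\rhobar(\phi)$ satisfying the braid relation, and any two such lifts differ by an element of $Z_G(N)\cap\Ghat$ evaluated on $k[\epsilon]$. The tangent space of $\Nil_{\Nbar}$ at $\Nbar$ contributes $\dim G_k-\dim Z_{G_k}(\Nbar)$ by Lemma~\ref{lem:nilpliftable}, while the torsor of compatible $\Phi$'s contributes $\dim \zfrak_\g(\Nbar)=\dim Z_{G_k}(\Nbar)$. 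Summing yields the total $\dim G_k$, matching the claimed relative dimension.
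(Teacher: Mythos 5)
Your proof is correct and follows the same strategy as the paper's: lift $N$ first via formal smoothness of $\Nil_{\Nbar}$, conjugate the reference element $\Phi$ of Remark~\ref{rmk:phi} to get a candidate for $\rho(\phi)$ satisfying the braid relation, and then correct by an element of the centralizer using the $\O$-smoothness of $Z_G(N_\sigma)$. The paper packages this as the statement that $M_{\Nbar} \to \Nil_{\Nbar}$ is a trivial $\widehat{Z_G(N_\sigma)}$-torsor (exhibiting an $\O$-point over $N_\sigma$), whereas you unfold it as an explicit step in the infinitesimal lifting criterion; the dimension count is the same on both sides.
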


\begin{proof}
Let $\Phibar = \rhobar(\phi) \in G(k)$ and let $\Ghat_\Phibar$ be the formal completion of $G$ at $\Phibar$.  Using the relation $$\rhobar(\phi) \rhobar(\tau) \rhobar(\phi)^{-1} = \rhobar(\tau)^q,$$ we deduce that $\Phibar \, \overline{N} \, \Phibar^{-1} = q \Nbar$.  Therefore we study the functor $M_{\Nbar}$ on $\Chat_\O$ defined by
\[
 M_{\Nbar}(R) = \{ (\Phi,N) : N \in \Nil_{\Nbar} (R),\, \Phi \in \Ghat_\Phibar(R), \, \Phi N \Phi^{-1} = q N \} \subset \Nil_{\Nbar}(R) \times \Ghat_\Phibar(R).
\]
Any such lift $(\Phi,N)$ to a coefficient ring $R$ determines a homomorphism $T_q \to G(R)$ lifting $\rhobar$ via $\phi \mapsto \Phi$ and $\tau \mapsto \exp(N)$: it is continuous because $\exp(\Nbar)$ is unipotent.  We will analyze $M_{\Nbar}$ through the composition
\[
 M_{\Nbar} \to \Nil_{\Nbar} \to \Spf \O.
\]
First, observe that $M_{\Nbar} \to \Nil_{\Nbar}$ is relatively representable as ``$\Phi N = q N \Phi$'' is a formal closed condition on points $\Phi$ of $(\Ghat_{\overline{\Phi}})_R$ for each $N \in \Nil_{\Nbar}(R)$.

From Lemma~\ref{lem:nilpliftable}, we know that $\Nil_{\Nbar}$ is formally smooth over $\O$, and the universal nilpotent is $g N_\sigma g^{-1}$ for some $g \in \Ghat(\Nil_{\Nbar})$.  To check formal smoothness of the map  $M_{\Nbar} \to \Nil_{\Nbar}$, it therefore suffices to check the formal smoothness of the fiber of $M_{\Nbar}$ over the $\O$-point $N_\sigma$ of $\Nil_{\Nbar}$.

We have written down $\Phi_\sigma \in G(\O)$ satisfying $\Phi_\sigma N_\sigma \Phi^{-1}_\sigma = q N_\sigma$ in Remark~\ref{rmk:phi}.  Observe that $ \Phibar \, \Phibar_\sigma^{-1} \in Z_G(N_\sigma)(k)$.  By smoothness, we may lift $\Phibar \, \Phibar_\sigma^{-1}$ to an element $s \in Z_G(N_\sigma)(\O)$.  Then $s \Phi_\sigma$ reduces to $\Phibar$ and satisfies $(s \Phi_\sigma) N_\sigma (s \Phi_\sigma)^{-1} = q N_\sigma$, so the fiber of $M_{\Nbar}$ over $N_\sigma$ has an $\O$-point.  
The relative dimension of the formally smooth $\Nil_{\Nbar}$ is $\dim G_k - \dim Z_{G_k}(\Nbar)$ by Lemma~\ref{lem:nilpliftable}, and $M_{\Nbar} \to \Nil_{\Nbar}$ is a $\widehat{Z_G(N_\sigma)}$-torsor since it has an $\O$-point over $N_\sigma$.  As $Z_G(N_\sigma)$ is smooth it follows that $M_{\Nbar}$ is formally smooth over $\Spf \O$ of relative dimension $\dim G_k$.
\end{proof}

\begin{example}
This recovers \cite[Lemma 2.4.19]{cht08} in the case $G = \GL_m$.  
\end{example}

Let $S$ be the (torus) quotient of $G$ by its derived group $G'$, and $\mu : G \to S$ the quotient map.  For use later, we now study a variant where we fix a lift $\nu : T_q \to S(\O)$ of $\mu \circ \rhobar : T_q \to S(k)$:

\begin{cor} \label{cor:mrtamefixed}
Under assumptions \ref{assumption1} to \ref{assumption4}, the deformation condition of minimally ramified lifts $\rho : T_q \to G(R)$ satisfying $\mu \circ \rho = \nu$ is a liftable deformation condition.  The lifting ring $R_\rhobar^{\mr, \nu,  \square}$ is of relative dimension $\dim G_k - 1$, and the tangent space to the deformation functor has dimension $\dim_k H^0(T_q, \adrho)-1$.
\end{cor}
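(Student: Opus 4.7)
The plan is to realize the lifting functor $R^{\mr, \nu, \square}_\rhobar$ as the fiber of a formally smooth morphism out of the functor $M_{\Nbar}$ studied in Proposition~\ref{prop:smoothnessmrtame}. Recall that pairs $(\Phi, N) \in M_{\Nbar}(R)$ correspond bijectively to minimally ramified lifts $\rho : T_q \to G(R)$ via $\rho(\phi) = \Phi$, $\rho(\tau) = \exp(N)$. Since $N$ is nilpotent and $p$ is very good, $N \in \g'_R$ under the decomposition $\g = \g' \oplus \zfrak_\g$, so $\mu(\rho(\tau)) = \mu(\exp(N)) = 1$ automatically. Matching with $\nu(\tau)$ forces $\nu(\tau) = 1$ (a necessary condition for any minimally ramified lift with similitude $\nu$ to exist), under which the constraint $\mu \circ \rho = \nu$ reduces to the single equation $\mu(\Phi) = \nu(\phi)$. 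Thus I define $M_{\Nbar}^\nu$ to be the fiber over $\nu(\phi)$ of the morphism
\[
  \mu_* : M_{\Nbar} \to \widehat{S}_{\mu(\Phibar)}, \qquad (\Phi, N) \mapsto \mu(\Phi).
\]

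The crux is to show that $\mu_*$ is formally smooth. Given a small surjection $A_1 \onto A_0$ with square-zero kernel $I$, a point $(\Phi_0, N_0) \in M_{\Nbar}(A_0)$, and $s_1 \in \widehat{S}_{\mu(\Phibar)}(A_1)$ restricting to $\mu(\Phi_0)$, I would first use formal smoothness of $M_{\Nbar}$ (Proposition~\ref{prop:smoothnessmrtame}) to produce \emph{some} lift $(\Phi_1', N_1) \in M_{\Nbar}(A_1)$, and then correct $\Phi_1'$ by a central element to match $s_1$. The discrepancy $s_1 \mu(\Phi_1')^{-1}$ lies in $\ker(S(A_1) \to S(A_0)) \simeq \Lie S \otimes_k I$. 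In very good characteristic, the isogeny $Z_G^\circ \to S$ has étale kernel, so $d\mu|_{\zfrak_\g} : \zfrak_\g \to \Lie S$ is an isomorphism; hence there exists $z \in \zfrak_\g \otimes_k I$ with $\mu(\exp(z)) = s_1 \mu(\Phi_1')^{-1}$. Setting $\Phi_1 := \exp(z) \Phi_1'$: because $z \in \zfrak_\g = \Lie Z_G$, the element $\exp(z)$ lies in $Z_G(A_1)$, hence commutes with $\Phi_1'$ and acts trivially on $N_1$ via $\Ad_G$, so $\Phi_1 N_1 \Phi_1^{-1} = \Phi_1' N_1 (\Phi_1')^{-1} = q N_1$ while $\mu(\Phi_1) = s_1$. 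This supplies the required lift; the need to find a central correction is the main obstacle, and it is exactly where very good characteristic is used essentially.

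Since $\widehat{S}_{\mu(\Phibar)}$ is formally smooth over $\O$ of relative dimension $\dim S_k = 1$ and $M_{\Nbar}$ is formally smooth over $\O$ of relative dimension $\dim G_k$, the fiber $M_{\Nbar}^\nu$ is formally smooth over $\O$ of relative dimension $\dim G_k - 1$, identifying $R^{\mr, \nu, \square}_\rhobar$ as claimed. For the deformation tangent space I would invoke Remark~\ref{rem:framedvdef}: strict equivalence of lifts with fixed $\nu$ still uses conjugation by all of $\Ghat(R)$ (automatically preserving $\mu \circ \rho$ because $S$ is abelian), so the coboundary subspace has dimension $\dim_k \g_k - \dim_k H^0(T_q, \adrho)$. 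Subtracting from the lifting tangent space dimension $\dim G_k - 1$ yields $\dim_k H^0(T_q, \adrho) - 1$, as required.
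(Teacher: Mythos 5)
Your argument is correct, and it uses the same essential ingredients as the paper, namely that $S = G/G'$ has rank $1$, that $\mu(\exp(N))=1$ so the only genuine constraint is on $\rho(\phi)$, and that the central torus gives exactly the room needed to make the constraint ``independent.'' The packaging is a bit different, though. The paper observes that since $S$ has rank $1$, $R_\rhobar^{\mr,\nu,\square}$ is the vanishing locus of a \emph{single} function in the formally smooth $R_\rhobar^{\mr\square}$, and then shows the tangent space drops by exhibiting a tangent vector \emph{outside} the constrained subspace (twist by a nontrivial unramified character $T_q \to Z(R)$ over $k[\epsilon]$); formal smoothness of a hypersurface with strictly smaller tangent space is then automatic. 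You instead build the formally smooth map $\mu_*\colon M_{\Nbar}\to\widehat S_{\mu(\Phibar)}$ and directly verify the lifting criterion by a central correction $\exp(z)$, $z\in\zfrak_\g\otimes I$, which is a ``dual'' presentation of the same central-torus trick (you show you can always land \emph{on} the constraint, the paper shows you can always move \emph{off} it). The paper's route is shorter because it outsources the smoothness conclusion to the standard algebra fact about cutting a power series ring by one element with nonzero image in the cotangent space, while your route is more hands-on but also makes explicit the necessary (and implicitly assumed) compatibility $\nu(\tau)=1$, which the paper leaves tacit. The tangent-space dimension count at the end is the same in both.

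One small remark: when you say that $d\mu|_{\zfrak_\g}$ is an isomorphism because the isogeny $Z_G^\circ\to S$ has \'etale kernel, it is worth noting that for $G=\GSp_m$ or $\GO_m$ this kernel is $\mu_2$, so the needed invertibility is exactly $p\neq 2$, which is part of assumption \ref{assumption1}. The rest is fine.
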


\begin{proof}
The last claim about the dimension of the tangent space to the deformation functor follows from the claim about the dimension of the lifting ring and Remark~\ref{rem:framedvdef}.  

The quotient torus $S = G / G'$ is split of rank $1$, so the subscheme $R_\rhobar^{\mr, \nu, \square} \subset R_\rhobar^{\mr \square}$ is the vanishing of locus of a single function.  As $R_\rhobar^{\mr \square}$ is formally smooth over $\O$ with relative dimension $\dim G_k$, it suffices to check that the tangent space of $R_\rhobar^{\mr, \nu, \square}$ over $k$ (in the sense of \cite[\S15]{mazur95}) is a proper subspace of the tangent space of $R_\rhobar^{\mr \square}$: this will establish formal smoothness and the dimension claim.

Let $Z$ be the maximal central torus of $G$.  On the level of Lie algebras, we know that $\Lie G$ splits over $\O$ as a direct sum of $\Lie G'$ and $\Lie S \simeq \Lie Z$ as $p$ is very good for $G$.  We can modify a lift $\rho_0$ over $R = k[\epsilon]/(\epsilon^2)$ by multiplying against an unramified non-trivial character $T_q \to Z(R)$ with trivial reduction, changing $\mu \circ \rho_0$.  Thus the tangent space of $R_\rhobar^{\mr, \nu, \square}$ is a proper subspace of that of $R_\rhobar^{\mr \square}$.
\end{proof}

\subsection{Deformation Conditions Based on Parabolic Subgroups} \label{sec:parabolicdefcondition}
The use of nilpotent orbits is not the only approach to defining a deformation condition at ramified places not above $p$.  As discussed in the introduction, the method used to prove \cite[Lemma 2.4.19]{cht08} suggests a generalization from $\GL_n$ to other groups $G$ based on deformations lying in certain parabolic subgroups of $G$.  This deformation condition is not smooth for algebraic groups beyond $\GL_n$, so it does not work in Ramakrishna's method.  In this section we give a conceptual explanation for this phenomenon. 

Let $P \subset G$ be a parabolic $\O$-subgroup.  The Richardson orbit for $P_k$ intersects $(\Lie R_u P)_k$ in a dense open set which is a single geometric orbit under $P_k$.  Suppose that $\rhobar(\tau)$ is the exponential of a $k$-point $\overline{N}$ in the Richardson orbit, and consider deformations $\rho: T_q \to G(\O)$ of $\rhobar$ ramified with respect to $P$ in the sense that $\rho(\tau) \in P$ (compare with Definition~\ref{defn:ramifiedwrtp}).  This requires specifying a lift of $\overline{N}$ that lies in $\Lie P$.  One could hope that such lifts would automatically be $G(\O)$-conjugate to the fixed lift $N_\sigma$ defined in Proposition~\ref{prop:symorthreps}, reminiscent of the definition we gave for $\Nil_{\Nbar}$, a situation in which the associated deformation (or lifting) ring is smooth.

We now show that often smoothness fails if $\overline{N}$ does \emph{not} lie in the Richardson orbit of $P_k$.  Lifts of $\overline{N}$  can ``change nilpotent type'' yet still lie in a parabolic lifting $P_k$, such as the example of the standard Borel subgroup in $\GL_3$ with
\[
 N = \begin{pmatrix}
      0 & 1 & 0 \\
      0 & 0 & p \\
      0 & 0 & 0
     \end{pmatrix} \quad \text{lifting } \quad \overline{N} =
     \begin{pmatrix}
      0 & 1 & 0 \\
      0 & 0 & 0 \\
      0 & 0 & 0
     \end{pmatrix}
\]
In particular, we easily obtain non-pure nilpotents.  This is very bad: the nilpotent orbits over a field are smooth but the nilpotent cone is not smooth, so the deformation problem of deforming with respect to $P$ should not be smooth because ``it sees multiple orbits''.   Furthermore, even if we could lift $\rhobar(\tau)$ appropriately, there would still be problems lifting $\rhobar(\phi)$ because the centralizer of a non-pure nilpotent is not smooth over $\O$ (the special and generic fiber typically have different dimensions).  So it is crucial to choose a parabolic such that $\overline{N}$ lies in the Richardson orbit of $P_k$.  

For $\GL_n$, \emph{all} nilpotent orbits are Richardson orbits. This is not true in general.  In particular, we should not expect the deformation condition of being ramified with respect to a parabolic to be liftable.  Example~\ref{ex:unliftable} illustrates this phenomenon for $\GSp_4$, which we now revisit in a more conceptual manner.

\begin{example}
Take $G = \GSp_4$. Parabolic subgroups correspond to isotropic flags.  Up to conjugacy, these subgroups are stabilizers of the flags
\begin{align*}
 0 \subset \Span(v_1) \subset \Span(v_1,v_2) &\subset \Span(v_1,v_2,v_3) \subset k^4, 0 \subset k^4 \\
 0 \subset \Span(v_1) \subset \Span(v_1,v_2,v_3) \subset &k^4, \quad  0 \subset \Span(v_1,v_2) \subset k^4
\end{align*}
where $\{v_1,v_2,v_2,v_4\}$ is a basis of $k^4$.
Their Richardson orbits correspond to the nilpotent orbits indexed respectively by the partitions $1+1+1+1$, $4$, $4$, and $2+2$.  In particular, the same nilpotent orbit is associated with two flags, and the nilpotent orbit corresponding to the partition $2+1+1$ does not appear.  This corresponds to a nilpotent orbit that is not a Richardson orbit; for the representation in Example~\ref{ex:unliftable}, $\log(\rhobar(\tau))$ is in this nilpotent orbit.
\end{example}

\section{Minimally Ramified Deformations in General} \label{sec:mr}
We continue the notation of the previous section.  We have defined the minimally ramified deformation condition for representations factoring through the quotient $T_q = \ZZhat \ltimes \ZZ_p$ of the tame Galois group $\Gamma_L^\tame$ at a place away from $p$.   In this section, we will adapt the matrix-theoretic methods in \cite[\S 2.4.4]{cht08}, making use of more conceptual module-theoretic arguments, to define the minimally ramified deformation condition for any representation when $G = \GSp_{m}$ or $G= \GO_m$.  (Minor variants of this method work for $\Sp_{m}$ and $\SO_m$, and the original method of \cite[\S 2.4.4]{cht08} works for $\GL_m$.)  We naturally embed $G$ into $\GL(M)$ for a free $\O$-module $M$ of rank $m$, and let $V$ denote the reduction of $M$, a vector space over the residue field $k$.

We consider a representation $\rhobar : \Gamma_L \to G(k) \subset \GL(V)(k)$ which may be wildly ramified (with $L$ an $\ell$-adic field for $\ell \neq p$).  We will define a deformation condition for $\rhobar$ in terms of the minimally ramified deformation condition for certain associated tamely ramified representations, after possibly extending $\O$.  In \S\ref{sec:decomposing}, we analyze $\rhobar$ as being built out of two pieces of data: representations of a closed normal subgroup $\Lambda_L$ of $\Gamma_L$ whose pro-order is prime to $p$, and tamely ramified representations of $\Gamma_L/\Lambda_L$.  The representation theory of $\Lambda_L$ is manageable since its pro-order is prime to $p$, and representations of $\Gamma_L/\Lambda_L$ can be understood using the results of the previous section.

\subsection{Decomposing Representations} \label{sec:decomposing}

We begin with a few preliminaries about representations over rings.  Let $\Lambda'$ be a profinite group and $R$ be an Artinian coefficient ring with residue field $k$.  If $\Lambda'$ has pro-order prime to $p$, the representation theory of $\Lambda'$ over $k$ is nice: every finite-dimensional continuous representation is a direct sum of irreducibles, and every such representation is projective over $k[\Lambda]$ for any finite discrete quotient $\Lambda$ of $\Lambda'$ through which the representation factors.  We are also interested in corresponding statements over an Artinian coefficient ring $R$.

\begin{fact} \label{fact:ringrep}
Let $R$ be an Artinian coefficient ring with residue field $k$.  Suppose the pro-order of $\Lambda'$ is prime to $p$.
Let $P$ and $P'$ be $R[\Lambda']$-modules that are finitely generated over $R$ with continuous action of $\Lambda'$, and $F$ be a $k[\Lambda']$-module that is finite dimensional over $k$ with continuous action of $\Lambda'$.  Let $\Lambda$ be a finite discrete quotient of $\Lambda'$ through which the $\Lambda'$-actions on $P$, $P'$, and $F$ factor.
\begin{enumerate}
 \item   If $P$ is free as an $R$-module, it is projective as a $R[\Lambda]$-module. \label{ringrep1}
 
 \item  If $P$ and $P'$ are projective over $R[\Lambda]$, they are isomorphic if and only if $\overline{P}$ and $\overline{P'}$ are isomorphic. \label{ringrep2}
 
 \item  There exists a projective $R[\Lambda]$-module (unique up to isomorphism) whose reduction is $F$. \label{ringrep3}
\end{enumerate}
\end{fact}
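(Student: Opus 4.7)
The plan is to reduce all three parts to two basic facts: that $|\Lambda|$ is invertible in $R$ (since $|\Lambda|$ divides the prime-to-$p$ pro-order of $\Lambda'$ and $R$ has residue characteristic $p$) and that $\mathfrak{m}_R$ is nilpotent because $R$ is Artinian.  For \ref{ringrep1} I would run the standard Maschke averaging argument: given a surjection $M \twoheadrightarrow P$ of $R[\Lambda]$-modules, $R$-freeness of $P$ yields an $R$-linear section $s \colon P \to M$, and the averaged map $\tilde{s}(x) := |\Lambda|^{-1} \sum_{g \in \Lambda} g \cdot s(g^{-1} x)$ is an $R[\Lambda]$-linear section, establishing the $R[\Lambda]$-projectivity of $P$.

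For \ref{ringrep2}, given an isomorphism $\overline{f} \colon \overline{P} \to \overline{P'}$, I would use projectivity of $P$ together with surjectivity of $P' \twoheadrightarrow \overline{P'}$ to lift $\overline{f}$ to an $R[\Lambda]$-linear $f \colon P \to P'$.  Since the reduction of $f$ is surjective and $P'$ is finitely generated over $R$, Nakayama forces $f$ itself to be surjective.  Projectivity of $P'$ then splits $f$, giving $P \cong \ker(f) \oplus P'$ as $R[\Lambda]$-modules; reducing modulo $\mathfrak{m}_R$ and using that $\overline{f}$ is an isomorphism shows $\overline{\ker f} = 0$, and a second application of Nakayama gives $\ker(f) = 0$.

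For \ref{ringrep3}, existence should go by lifting idempotents.  Applying \ref{ringrep1} with $R$ replaced by $k$ shows that $F$ is $k[\Lambda]$-projective, so it is a direct summand of some $k[\Lambda]^n$, cut out by an idempotent $\overline{e} \in \End_{k[\Lambda]}(k[\Lambda]^n)$.  Because $\mathfrak{m}_R \cdot R[\Lambda]$ is a nilpotent two-sided ideal of $R[\Lambda]$, the kernel of the reduction map $\End_{R[\Lambda]}(R[\Lambda]^n) \to \End_{k[\Lambda]}(k[\Lambda]^n)$ is nilpotent, so the standard lifting of idempotents modulo a nilpotent ideal produces an idempotent $e$ lifting $\overline{e}$; the $R[\Lambda]$-module $P := e \cdot R[\Lambda]^n$ is then projective and reduces to $F$, with uniqueness following immediately from \ref{ringrep2}.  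The most delicate step is \ref{ringrep2}: one genuinely needs projectivity of \emph{both} modules, since projectivity of $P$ alone gives only a surjection via Nakayama and not the splitting required to conclude injectivity.  Everything else is routine once $|\Lambda|$ is inverted and the nilpotence of $\mathfrak{m}_R$ is exploited.
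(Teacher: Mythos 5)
Your proof is correct. The paper itself does not prove this Fact but simply cites Serre, \emph{Linear Representations of Finite Groups}, \S 14.4, for all three statements; your argument is a faithful write-out of the standard material behind that reference (Maschke averaging once $|\Lambda|$ is invertible, Nakayama for surjectivity and for detecting a vanishing summand after a splitting, and lifting of idempotents modulo the nilpotent ideal $\mathfrak{m}_R \cdot R[\Lambda]$). One small remark on \ref{ringrep2}: your parenthetical that both projectivity hypotheses are used is exactly right --- projectivity of $P$ produces the lift $f$ of $\overline{f}$, while projectivity of $P'$ is what splits the resulting surjection $f \colon P \twoheadrightarrow P'$ so that $\ker f$ becomes a direct summand to which Nakayama can be applied.
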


These statements are special cases of results in \cite[\S 14.4]{serre77}.  We now record two lemmas which do not need the assumption that the pro-order of $\Lambda'$ is prime to $p$.  Here and elsewhere, we use $\Hom_{\Lambda'}$ to denote homomorphisms as representations of $\Lambda'$ (equivalently as $R[\Lambda']$-modules).

\begin{lem} \label{lem:hom}
 Let $P$ and $P'$ be $R[\Lambda']$-modules, finitely generated over $R$ with continuous action of $\Lambda'$ factoring through a finite discrete quotient $\Lambda$ of $\Lambda'$.  Assume $P$ and $P'$ are $R[\Lambda]$-projective.  The natural map gives an isomorphism \[\Hom_{\Lambda'}(P,P') \tensor{R} k \to \Hom_{\Lambda'}(\overline{P},\overline{P'}).\]
\end{lem}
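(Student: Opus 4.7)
The plan is to reduce the statement, via projectivity of $P$, to the case where $P$ is a free $R[\Lambda]$-module, where the claim becomes essentially tautological. Since $P$ is projective over $R[\Lambda]$, I can choose an $R[\Lambda]$-module $Q$ and an integer $n$ such that $P \oplus Q \simeq R[\Lambda]^n =: F$. For each $R[\Lambda]$-module $M$ let
\[
\alpha_M : \Hom_{\Lambda'}(M, P') \tensor{R} k \to \Hom_{\Lambda'}(\overline{M}, \overline{P'})
\]
denote the natural map sending $\phi \otimes c$ to $c \overline{\phi}$. This map is functorial in $M$ and carries finite direct sums to direct sums, because $\Hom_{\Lambda'}(-, P')$, reduction modulo $\mathfrak{m}_R$, and $(-) \tensor{R} k$ are each additive. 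Hence under the identification $F \simeq P \oplus Q$ the map $\alpha_F$ becomes $\alpha_P \oplus \alpha_Q$, so it suffices to show $\alpha_F$ is an isomorphism, and by additivity once more it is enough to treat the case $F = R[\Lambda]$.

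For $F = R[\Lambda]$, evaluation at $1$ identifies $\Hom_{\Lambda'}(R[\Lambda], P')$ with $P'$ as $R$-modules, and reduction identifies $\overline{R[\Lambda]}$ with $k[\Lambda]$ so that $\Hom_{\Lambda'}(k[\Lambda], \overline{P'}) \simeq \overline{P'}$ again by evaluation at $1$. Tensoring the first identification with $k$ over $R$ yields $\overline{P'}$, and under these compatible identifications $\alpha_{R[\Lambda]}$ is the identity map on $\overline{P'}$; in particular it is an isomorphism.

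I do not anticipate any substantive obstacle. The one point that merits care is the compatibility of the direct-sum decomposition from $F = P \oplus Q$ with $\alpha$, i.e.\ that under the canonical identifications one really has $\alpha_F = \alpha_P \oplus \alpha_Q$. This amounts to the observation that reducing the components of an $R[\Lambda']$-linear map $F \to P'$ modulo $\mathfrak{m}_R$ gives the components of the reduction, which is immediate from the functoriality of the reduction $M \mapsto \overline{M}$ with respect to the inclusions $P \into F$ and $Q \into F$. Notably, the prime-to-$p$ hypothesis on $\Lambda'$ that dominates the surrounding discussion plays no role here; only the $R[\Lambda]$-projectivity of $P$ is used.
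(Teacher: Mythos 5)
Your proof is correct, but it takes a genuinely different route from the paper. The paper works with the short exact sequence $0 \to \m P' \to P' \to \overline{P'} \to 0$: it applies the exact functor $\Hom_{\Lambda}(P,-)$ (exact because $P$ is $R[\Lambda]$-projective) to it, then identifies $\Hom_{\Lambda}(P, \m P')$ with $\m \cdot \Hom_{\Lambda}(P,P')$ using projectivity of $P$ together with $\m P' = \m \tensor{R} P'$ (from $R$-flatness of $P'$), and finally observes that $\Hom_{\Lambda}(P, \overline{P'}) = \Hom_{\Lambda}(\overline{P}, \overline{P'})$ since the target is killed by $\m$. Your approach instead splits $P$ off a finite free $R[\Lambda]$-module, uses additivity of all three functors involved to reduce to $P = R[\Lambda]$, and computes the map explicitly there. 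Each argument leans on the same two projectivity hypotheses, just in different guises; yours is perhaps easier to check line by line, while the paper's is shorter once one accepts the $\Hom$–$\tensor{R}$ commutation step. Two cosmetic remarks: you should record, as the paper does, that $\Hom_{\Lambda'}$ may be replaced by $\Hom_{\Lambda}$ since the actions factor through $\Lambda$; and the observation that $P$ is a direct summand of a \emph{finite} free $R[\Lambda]$-module uses that $P$ is finitely generated over $R$, hence over $R[\Lambda]$ (so finitely generated projective). Your closing remark that the prime-to-$p$ hypothesis is unused here matches the paper, which introduces this lemma as one that does not require that assumption.
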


\begin{proof} 
We may replace $\Hom_{\Lambda'}$ with $\Hom_{\Lambda}$.  Note that $\m P' = \m \tensor{R} P'$, so $\Hom_{\Lambda}(P, \m P') = \Hom_{\Lambda}(P,P') \tensor{R} \m$ as $P$ and $P'$ are $R[\Lambda]$-projective.  Then apply $\Hom_{\Lambda}(P,-)$ to the exact sequence $0 \to \m P' \to P' \to P'/\m P' \to 0$.
\end{proof}

\begin{lem} \label{lem:zero}
Let $\Lambda$ be a finite group and let $M$ and $M'$ be finite $R[\Lambda]$-modules whose reductions $\Mbar$ and $\Mbar'$ are non-isomorphic irreducible $k[\Lambda]$-modules.  Then $\Hom_{R[\Lambda]}(M,M') =0$.
\end{lem}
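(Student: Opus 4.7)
The plan is to filter $M'$ by powers of the maximal ideal $\mathfrak{m}$ of $R$ and to show by induction on $i$ that any $R[\Lambda]$-homomorphism $f \colon M \to M'$ satisfies $f(M) \subseteq \mathfrak{m}^i M'$ for every $i \geq 0$. Since $R$ is Artinian, $\mathfrak{m}^n = 0$ for some $n$, and this will force $f = 0$.

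The base case $i = 0$ is trivial. For the inductive step, suppose $f(M) \subseteq \mathfrak{m}^i M'$. Then the composition
\[
M \xrightarrow{f} \mathfrak{m}^i M' \twoheadrightarrow \mathfrak{m}^i M' / \mathfrak{m}^{i+1} M'
\]
kills $\mathfrak{m} M$, so it factors through a $k[\Lambda]$-linear map $\bar{f}_i \colon \Mbar \to \mathfrak{m}^i M'/\mathfrak{m}^{i+1} M'$. If we can show this map is zero, then $f(M) \subseteq \mathfrak{m}^{i+1} M'$ and the induction is complete.

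The key point is to determine the $k[\Lambda]$-structure of the target $\mathfrak{m}^i M'/\mathfrak{m}^{i+1} M'$. There is a natural $\Lambda$-equivariant surjection
\[
(\mathfrak{m}^i/\mathfrak{m}^{i+1}) \otimes_R M' \twoheadrightarrow \mathfrak{m}^i M' / \mathfrak{m}^{i+1} M',
\]
and the source is identified with $(\mathfrak{m}^i/\mathfrak{m}^{i+1}) \otimes_k \Mbar'$ as a $k[\Lambda]$-module (with $\Lambda$ acting trivially on the first factor). As such, it is a direct sum of copies of $\Mbar'$, so every composition factor of $\mathfrak{m}^i M'/\mathfrak{m}^{i+1} M'$ is isomorphic to $\Mbar'$. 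Since $\Mbar$ is simple, the image of $\bar{f}_i$ is either zero or isomorphic to $\Mbar$; but any nonzero simple submodule of a module whose composition factors are all $\Mbar'$ must be isomorphic to $\Mbar'$, contradicting $\Mbar \not\simeq \Mbar'$. Hence $\bar{f}_i = 0$, completing the induction.

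The argument is routine once the target is understood, and I do not expect a serious obstacle. The only subtle point is the identification of the composition factors of $\mathfrak{m}^i M'/\mathfrak{m}^{i+1} M'$ via the Nakayama-style surjection above, which is what makes the non-isomorphism of $\Mbar$ and $\Mbar'$ usable through Schur's lemma.
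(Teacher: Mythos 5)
Your proof is correct and takes essentially the same approach as the paper: the key observation in both is that the graded pieces $\m^i M'/\m^{i+1} M'$ have all composition factors isomorphic to $\Mbar'$ (via the surjection from $(\m^i/\m^{i+1}) \otimes \Mbar'$), while any $\Lambda$-map out of $M$ into such a piece factors through $\Mbar$, forcing it to vanish since $\Mbar \not\simeq \Mbar'$. The paper packages this as a descending induction showing $\Hom_{R[\Lambda]}(M, \m^i M') = 0$, whereas you do an ascending induction showing $f(M) \subseteq \m^i M'$ for a fixed $f$; these are formally equivalent.
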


\begin{proof}
 Filter $M'$ by the composition series $\{ \m^i M'\}$, and consider the surjection
 \[
  \m^i / \m^{i+1} \otimes \overline{M}' \onto \m^i M' / \m^{i+1} M'.
 \]
Now $\Lambda$ acts on $\m^i / \m^{i+1} \otimes M'$ via its action on $\overline{M}'$, so as a $k[\Lambda]$-module $\m^i M' / \m^{i+1} M'$ is isomorphic to a direct sum of copies of $\Mbar'$.  Thus
\[
 \Hom_{R[\Lambda]}( M , \m^i M' / \m^{i+1} M') = \Hom_{k[\Lambda]}(\Mbar , \m^i M' / \m^{i+1} M') =0
\]
as $\Mbar$ and $\Mbar'$ are non-isomorphic $k[\Lambda]$-modules.

By descending induction on $i$, we shall show that
\[
 \Hom_{R[\Lambda]} (M, \m^i M') =0.
\]
For large $i$, $\m^i M' =0$.  
Consider the exact sequence
\[
 0 \to \m^{i+1} M' \to \m^i M' \to  \m^i M' / \m^{i+1} M' \to 0.
\]
Applying $\Hom_{R[\Lambda]}(M,-)$, we obtain a left exact sequence
\[
 0 \to \Hom_{R[\Lambda]}(M,\m^{i+1} M' ) \to \Hom_{R[\Lambda]}(M,\m^{i} M' ) \to \Hom_{R[\Lambda]}(M,\m^i M' / \m^{i+1} M')
\]
The left term is $0$ by induction, and the right term is $0$ by the above calculation.  This completes the induction.
\end{proof}

Given $\rhobar : \Gamma_L \to G(k) \subset \GL(V)(k)$ and a lift $\rho : \Gamma_L \to G(R) \subset \GL(M)(R)$ for some $R \in \C_\O$, we now turn to decomposing the $R[\Gamma_L]$-module $M$.
Let $I_L \subset \Gamma_L$ be the inertia group, and pick a surjection $I_L \to \ZZ_p$.  Define $\Lambda_L$ to be the kernel of this surjection (normal in $\Gamma_L$).    
This is a pro-finite group with pro-order prime to $p$, and is independent of the choice of surjection.  Define the quotient $$T_L := \Gamma_L/\Lambda_L,$$ which is a quotient of the tamely ramified Galois group $\Gamma_L^\tame$ and of the form $T_q = \ZZhat \ltimes \ZZ_p$ as in \S\ref{sec:mrtame}.
We wish to compatibly decompose $V$ and $M$ as $\Lambda_L$-modules and then understand the action of $\Gamma_L$ on the decomposition.

We first make a finite extension of $k$ (and of $\O$) so that all of the (finitely many) irreducible representations of $\Lambda_L$ over $k$ occurring in $V$ are absolutely irreducible over $k$.  

Because $\Lambda_L$ has order prime to $p$, $\Res^{\Gamma_L}_{\Lambda_L}(V)$ is completely reducible and we can write
\[
 \Res^{\Gamma_L}_{\Lambda_L}(V) = \bigoplus_{\tau} V_\tau 
\]
where $\tau$ runs through the set of isomorphism classes of irreducible representations of $\Lambda_L$ over $k$ occurring in $V$, and each $V_\tau$ is the $\tau$-isotypic component.  We will obtain an analogous decomposition for $M$.  

Let $\Gamma$ be a finite discrete quotient of $\Gamma_L$ through which $\rho$ factors, and let $\Lambda$ be the image of $\Lambda_L$ in $\Gamma$.
Using Fact~\ref{fact:ringrep}\eqref{ringrep3} we can lift $\tau$ to a projective $R[\Lambda]$-module $\tildetau$ unique up to isomorphism.  We will eventually want this lift to have additional properties (see \S \ref{sec:extendingtau}), but this is not yet necessary.  We set $W_\tau := \Hom_{\Lambda_L}(\tildetau,M)$ and consider the natural morphism
\[
 \bigoplus_\tau \tildetau \tensor{R} W_\tau \to M.
\]
Note that $M$ is $R[\Lambda]$-projective by Fact~\ref{fact:ringrep}\eqref{ringrep1}.

\begin{lem} \label{lem:decomposition}
 This map is an isomorphism of $R[\Lambda_L]$-modules.
\end{lem}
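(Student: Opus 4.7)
The plan is to reduce modulo $\m_R$, where the assertion becomes the classical isotypic decomposition of the semisimple $k[\Lambda_L]$-module $V$, and then bootstrap back to the lift using Nakayama and a rank count. Let me fix a finite discrete quotient $\Lambda$ of $\Gamma_L$ through which $\rho$ factors, and also write $\Lambda$ for the image of $\Lambda_L$. Because $\Lambda$ has order prime to $p$, Fact~\ref{fact:ringrep}\eqref{ringrep1} guarantees that $M$ (free over $R$) is $R[\Lambda]$-projective, and by construction $\tildetau$ is $R[\Lambda]$-projective. The evaluation map
\[
 \Psi \colon \bigoplus_\tau \tildetau \tensor{R} W_\tau \longrightarrow M, \qquad x \otimes f \mapsto f(x),
\]
is visibly $R[\Lambda_L]$-linear; the claim is that $\Psi$ is an isomorphism.

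First I would reduce $\Psi$ modulo $\m_R$. Applying Lemma~\ref{lem:hom} to the $R[\Lambda]$-projective pair $(\tildetau, M)$ yields an isomorphism $W_\tau \tensor{R} k \simeq \Hom_{\Lambda_L}(\tau, V)$. Since $\overline{\tildetau} \simeq \tau$ and $M \tensor{R} k = V$, the reduction $\overline{\Psi}$ is the natural evaluation map
\[
 \bigoplus_\tau \tau \tensor{k} \Hom_{\Lambda_L}(\tau, V) \longrightarrow V,
\]
which is an isomorphism: $V$ is a finite-dimensional semisimple $\Lambda_L$-representation (pro-order prime to $p$) and each $\tau$ is absolutely irreducible over $k$ thanks to the finite extension of $k$ made before the statement of the lemma.

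Then I would conclude in two short steps. For surjectivity, the cokernel of $\Psi$ is a finitely generated $R$-module that vanishes modulo $\m_R$, so Nakayama forces it to be zero. For injectivity, I note that $\tildetau$ is $R[\Lambda]$-projective and $R[\Lambda]$ is $R$-free, so $\tildetau$ is $R$-projective, hence $R$-free since $R$ is local. Similarly, writing $\tildetau$ as a direct summand of some $R[\Lambda]^n$ realizes $W_\tau = \Hom_{R[\Lambda]}(\tildetau, M)$ as an $R$-direct summand of $M^n$, so $W_\tau$ is $R$-free as well. Thus the source of $\Psi$ is a finite free $R$-module, and comparing with its reduction gives $\rank_R\bigl(\bigoplus_\tau \tildetau \tensor{R} W_\tau\bigr) = \dim_k V = m = \rank_R M$. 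A surjection between finite free modules of equal rank over a commutative ring is an isomorphism, so $\Psi$ is injective as well.

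The main obstacle — and really the only non-formal input — is the identification $W_\tau \tensor{R} k \simeq \Hom_{\Lambda_L}(\tau, V)$ supplied by Lemma~\ref{lem:hom}. This rests on the $R[\Lambda]$-projectivity of $\tildetau$, and hence ultimately on the prime-to-$p$ order of $\Lambda_L$; without this hypothesis the reduction step would fail and the rest of the argument with it.
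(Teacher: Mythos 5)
Your proof is correct and takes essentially the same route as the paper: reduce modulo $\m_R$ via Lemma~\ref{lem:hom}, recognize the residual map as the classical isotypic decomposition of the semisimple $k[\Lambda]$-module $V$, and deduce surjectivity by Nakayama. The only variation is in the injectivity step — the paper uses $R$-projectivity of $M$ to split the surjection so that the kernel reduces compatibly and then applies Nakayama again, whereas you establish $R$-freeness of both sides and invoke a rank count; both are valid and essentially equivalent.
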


\begin{proof}
It suffices to check it is an isomorphism of $R[\Lambda]$-modules.  When $R=k$, $\End_{\Lambda}(\tau)=k$ as we extended $k$ so that all of the irreducible representations of $\Lambda$ over $k$ occurring inside $V$ are absolutely irreducible.  Splitting up $V$ as a direct sum of irreducibles, we obtain the desired isomorphism.

In the general case, the map is an isomorphism after reducing modulo $\m$ (use Lemma~\ref{lem:hom}).  Thus by Nakayama's lemma it is surjective.  Since $M$ is $R$-projective, the formation of the kernel commutes with reduction modulo $\m$.  Thus, again using Nakayama's lemma the kernel is zero.
\end{proof}

We define $M_\tau$ to be the image of $\tildetau \tensor{R} \Hom_{\Lambda_L}(\tildetau,M)$ in $M$.  It is the largest $R[\Lambda_L]$-direct summand whose reduction is a direct sum of copies of $\tau$. 

We next seek to understand the action of $\Gamma_L$ on this canonical decomposition of $M$.  For $g \in \Gamma_L$, consider the $R[\Lambda_L]$-module $g M_\tau$: it is a direct summand of $M$ over $R$ whose reduction is a direct sum of copies of the representation $\tau^g$ defined by $\tau^g(h) = \tau(g^{-1} h g)$ for $h \in \Lambda_L$.  Thus we see that $g M_\tau = M_{\tau^g}$ inside $M$, and $\Gamma_L$ permutes the $M_\tau$'s.  The orbits corresponds to sets of conjugate representations.

Consider the stabilizer of $V_\tau$:
\[
 \Gamma_{L,\tau} = \{ g \in \Gamma_L : g V_\tau = V_\tau \text{ inside } V \} = \{g \in \Gamma_L : \tau^g \simeq \tau\} \subset \Gamma_L
\]
with corresponding image
\[
 \Gamma_\tau = \{ g \in \Gamma : g V_\tau = V_\tau \text{ inside } V\} = \{g \in \Gamma : \tau^g \simeq \tau\} \subset \Gamma.
\]
Then the $k$-span of the $\Gamma_L$-orbit of $V_\tau$ is exactly the representation $\Ind_{\Gamma_{L,\tau}}^{\Gamma_L} V_\tau = \Ind^{\Gamma}_{\Gamma_\tau} V_\tau$.
Letting $[\tau]$ denote the set of $R[\Lambda]$-isomorphism classes of $\Lambda$-representations $\Gamma$-conjugate to $\tau$, by taking into account the action of $\Gamma_\tau$ the isomorphism in Lemma~\ref{lem:decomposition} becomes an isomorphism of $R[\Gamma_L]$-modules
\begin{equation} \label{eq:mdecomposition}
 \bigoplus_{[\tau]} \Ind_{\Gamma_{L,\tau}}^{\Gamma_L} M_\tau \xrightarrow{\sim} M
\end{equation}
using one representative $\tau$ per $\Gamma_L$-conjugacy class $[\tau]$.

For orthogonal or symplectic representations, we will make precise the notion that this decomposition is ``compatible with duality''.  
Denote the similitude character by $\mu$, and let 
 $\overline{\nu} := \mu \circ \rhobar : \Gamma_L \to k^\times$.
Let $N$ be a free $\O$-module of rank $1$ on which $\Gamma_L$ acts by a specified continuous $\O^\times$-valued lift $\nu$ of $\overline{\nu}$, and let $\overline{N}$ be its reduction modulo $\m$.
For an $R$-module $M$, define $M^\vee = \Hom_{R}(M,N_R)$ with the evident $\Gamma_L$-action.  

Now suppose we have chosen $\nu$ so that $\nu = \mu \circ \rho$ (viewed as maps $\Gamma_L \to R^\times$).  The perfect pairing on the $M$ corresponding to $\rho$ gives an isomorphism of $R[\Gamma_L]$-modules $\psi: M \simeq M^\vee$.  In particular, using Lemma~\ref{lem:decomposition} we see that
\[
 \bigoplus_\tau M_\tau = M \overset{\psi} \simeq M^\vee = \bigoplus_\tau (M_\tau)^\vee,
\]
To simplify notation, we will write $M_\tau^\vee$ for $(M_\tau)^\vee$.  Note that the right side is also an isotypic decomposition, with $M_\tau^\vee$ the maximal direct whose direct sum is a direct sum of copies of $\tau^\vee$.    By comparing isotypic pieces, we obtain a natural isomorphism (of $R[\Lambda_L]$-modules)
\[
 \psi_\tau: M_\tau^\vee \simeq M_{\tau^*}
\]
for some irreducible representation $\tau^*$ of $\Lambda_L$ occurring in $V$.  Note that $\tau^* \simeq \tau^\vee$ as $k[\Lambda_L]$-modules, but that $\tau^*$ is not necessarily the dual of $\tau$ as $k[\Gamma_L]$-modules.  There are three cases:
\begin{itemize}
 \item {\bf Case 1:}  $\tau$ is not $\Gamma_L$-conjugate to $\tau^*$;
 \item {\bf Case 2:} $\tau$ is isomorphic to $\tau^*$ as $\Gamma_L$-modules;
 \item {\bf Case 3:} $\tau$ is $\Gamma_L$-conjugate to $\tau^*$ but not isomorphic.
\end{itemize}

In the second case, we claim that the isomorphism of $k[\Lambda_L]$-modules $\imath : \tau \simeq \tau^\vee$ gives a sign-symmetric (for some fixed sign $\epsilon_\tau$) perfect pairing on $\tau$.  Note that $\overline{W}_\tau = \Hom_{\Lambda}(\tau,V)$ by Lemma~\ref{lem:hom}, and that
$$\overline{W}_\tau = \Hom_{\Lambda}(\tau,V) \overset{\overline{\psi}} \simeq \Hom_{\Lambda}(\tau,V^\vee) \overset{\imath} \simeq \Hom_{\Lambda}(\tau^\vee, V^\vee) \simeq \overline{W}_\tau^\vee.$$  Denote this isomorphism by $\varphi_\tau$: it defines a pairing $\langle , \rangle_{\overline{W}_\tau}$ on $\overline{W}_\tau$ via
\[
 \langle w_1, w_2 \rangle_{\overline{W}_\tau} := \varphi_\tau(w_1)(w_2).
\]
We can also 
define $\langle v_1,v_2 \rangle_{\tau} := \imath(v_1)(v_2)$ for $v_1, v_2 \in \tau$.  We have a commutative diagram of isomorphisms
\[
\xymatrixcolsep{5pc}
 \xymatrix{
  \tau \otimes \overline{W}_\tau \ar^{\id \otimes \varphi_\tau} [r] \ar^{\on{eval}}[d] &  \tau \otimes \overline{W}_\tau^\vee \ar^{\imath \otimes \id} [r] & \tau^\vee \otimes \overline{W}_\tau^\vee \ar^{\on{eval}^\vee}[d] \\
  V_\tau \ar^{\overline{\psi_\tau}} [rr]&  & V_\tau^\vee
 }
\]
The commutativity says that for elementary tensors 
$m_i = v_i \otimes w_i \in V_\tau = \tau \otimes \overline{W}_\tau$ we have
\begin{align} \label{eq:vpairingseq}
\begin{split}
 \langle m_1,m_2 \rangle_M = \overline{\psi}(m_1)(m_2)& = \left( \imath(v_1) \otimes \varphi_\tau(w_1) \right) ( v_2 \otimes w_2) \\
  &= \imath(v_1)(v_2) \cdot \varphi_\tau(w_1)(w_2) = \langle v_1, v_2 \rangle_{\tau} \langle w_1, w_2 \rangle_{\overline{W}_\tau}.
\end{split}
\end{align}

Remember that the pairing on $V$ is $\epsilon$-symmetric.

\begin{lem} \label{lem:signsymmetricpairing}
The pairing $\langle , \rangle_{\tau}$ is a sign-symmetric (for fixed sign $\epsilon_\tau$).
\end{lem}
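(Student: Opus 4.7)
The plan is to use that $\tau$ is absolutely irreducible as a $\Lambda_L$-module (a reduction we have already arranged) to pin down, via Schur's lemma, the space of $\Lambda_L$-equivariant maps $\tau \to \tau^\vee$ as one-dimensional, and then show that the ``transpose'' of $\imath$ is a scalar multiple of $\imath$ with the scalar forced to be $\pm 1$.

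Concretely, define $\imath^T : \tau \to \tau^\vee$ by $\imath^T(v_1)(v_2) := \imath(v_2)(v_1)$. A direct check (using the definition of the dual $\Lambda_L$-action on $\tau^\vee$ and the $\Lambda_L$-equivariance of $\imath$) shows that $\imath^T$ is also $\Lambda_L$-equivariant; it is moreover an isomorphism, by symmetry of the construction. Since $\tau$ is absolutely irreducible over $k$, Schur's lemma gives $\dim_k \Hom_{\Lambda_L}(\tau,\tau^\vee) \leq 1$. Both $\imath$ and $\imath^T$ are nonzero elements of this space, so there exists $\epsilon_\tau \in k^\times$ with $\imath^T = \epsilon_\tau \cdot \imath$. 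Applying the transpose construction once more gives $(\imath^T)^T = \imath$, hence $\epsilon_\tau^2 = 1$; since $p \neq 2$, we get $\epsilon_\tau = \pm 1$.

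Unwinding definitions, the equality $\imath^T = \epsilon_\tau \imath$ translates into $\langle v_2, v_1 \rangle_\tau = \imath(v_2)(v_1) = \imath^T(v_1)(v_2) = \epsilon_\tau \langle v_1, v_2 \rangle_\tau$, which is precisely the sign-symmetry with sign $\epsilon_\tau$. Note that replacing $\imath$ by $c \cdot \imath$ for $c \in k^\times$ rescales the pairing but leaves $\epsilon_\tau$ unchanged, so the sign is intrinsic to $\tau$ (as a $\Lambda_L$-module equipped with the isomorphism class of the self-duality forced by Case 2).

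There is no real obstacle here: the argument is a one-shot application of Schur's lemma, and the only input beyond absolute irreducibility is the characteristic assumption $p \neq 2$ needed to extract a sign from $\epsilon_\tau^2 = 1$. The substance of the lemma lies in what it prepares for the sequel, namely identifying in which of the two sign cases we are (i.e., comparing $\epsilon_\tau$ with $\epsilon$), which is addressed by the pairing formula \eqref{eq:vpairingseq} together with the sign-symmetry of $\langle{,}\rangle_{\overline{W}_\tau}$.
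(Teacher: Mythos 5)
Your proof is correct, and it takes a genuinely different route from the paper's. The paper argues directly from the $\epsilon$-symmetry of the ambient pairing $\langle \cdot, \cdot\rangle_V$ via the formula \eqref{eq:vpairingseq}: it splits into the case where some $\imath(v)(v)\neq 0$ (cancel $\imath(v)(v)$ to first deduce $\langle w_1,w_2\rangle_{\overline{W}_\tau}=\epsilon\langle w_2,w_1\rangle_{\overline{W}_\tau}$, then feed that back in to get $\langle \cdot,\cdot\rangle_\tau$ symmetric) and the case $\imath(v)(v)\equiv 0$ (so $\langle \cdot,\cdot\rangle_\tau$ is alternating). You instead invoke the classical Schur's lemma argument: $\imath^T=\epsilon_\tau \imath$ inside the one-dimensional $\Hom_{\Lambda_L}(\tau,\tau^\vee)$, and $(\imath^T)^T=\imath$ forces $\epsilon_\tau^2=1$, hence $\epsilon_\tau=\pm1$ since $p\neq 2$. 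Your argument is cleaner and more intrinsic---it uses only the absolute irreducibility and self-duality of $\tau$, not the pairing on $V$ at all---and it is essentially the textbook proof that an absolutely irreducible self-dual representation in odd characteristic is of orthogonal or symplectic type. The paper's more concrete route has the side benefit that its Case A computation simultaneously records $\langle \cdot,\cdot\rangle_{\overline{W}_\tau}$ as $\epsilon$-symmetric, anticipating the relation $\epsilon=\epsilon_\tau\epsilon_{W_\tau}$ used later in \S\ref{sec:liftingwithpairings}; with your approach that relation must be derived separately from \eqref{eq:vpairingseq}, as you correctly note at the end. One tiny nit: to apply Schur you only need $\imath^T\neq 0$ (immediate from $\imath\neq 0$), not that $\imath^T$ is an isomorphism, so the clause ``moreover an isomorphism, by symmetry of the construction'' is unnecessary and could be dropped.
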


\begin{proof}
Suppose there exists $v \in \tau$ such that $\imath(v) ( v) \neq 0$.  For $w_1, w_2 \in \overline{W}_\tau$, \eqref{eq:vpairingseq} gives
\[
\imath(v) (v) \varphi_\tau(w_1)(w_2) =  \langle v \otimes w_1, v \otimes w_2 \rangle_V  = \epsilon  \langle v \otimes w_2, v \otimes w_1 \rangle_V = \epsilon \imath(v) (v) \varphi_\tau(w_2)(w_1).
\]
Canceling $\imath( v) (v)$, we conclude that $\langle w_1,w_2 \rangle_{\overline{W}_\tau} = \epsilon \langle w_2, w_1\rangle_{\overline{W}_\tau}$.  Using \eqref{eq:vpairingseq}, we conclude that
\[
 \epsilon \imath(v_2)(v_1) \cdot \varphi_\tau(w_2)(w_1) = \epsilon \langle m_2, m_1 \rangle_V = \langle m_1,m_2 \rangle_V = \imath(v_1)(v_2) \cdot \varphi_\tau(w_1)(w_2) = \epsilon \imath(v_1)(v_2) \cdot \varphi_\tau(w_2)(w_1).
\]
Choosing $w_1$ and $w_2$ with $\langle w_2, w_1 \rangle_{\overline{W}_\tau} \neq 0$ (possible as $\langle , \rangle_V$ is perfect), we then conclude that $\langle v_1,v_2 \rangle_{\tildetau} = \langle v_2, v_1 \rangle_{\tildetau}$.

Otherwise $\imath(v) ( v) =0$ for all $v \in \tau$, in which case $\langle, \rangle_{\tildetau}$ is alternating.
\end{proof}

In \S\ref{sec:extendingtau} we will see that the action of $\Lambda_L$ on the module underlying $\tildetau$ can be extended to an action of $\Gamma_{L,\tau}$ factoring through $\Gamma_\tau$.  Therefore, $W_\tau = \Hom_{\Lambda_L}(\tildetau,M)$ is naturally a representation of $T_{L,\tau} := \Gamma_{L,\tau}/\Lambda_L$, and of $T_\tau := \Gamma_\tau/\Lambda$ (a finite quotient of $T_{L,\tau}$).  In \S\ref{sec:mrcondition}, we will use the minimally ramified deformation condition of \S\ref{sec:mrtame} to specify which deformations $W_\tau$ are allowed.  Together with the decomposition~\eqref{eq:mdecomposition}
\[
  \bigoplus_{[\tau]} \Ind^{\Gamma_L}_{\Gamma_{L,\tau}} \left(\tildetau \otimes W_\tau \right) \to M
\]
this defines a deformation condition for $\rhobar$.  Some care is needed to ensure compatibility of the lifts with the pairing on $M$, which will require breaking into cases in the next sections based on the relationship between $\tau$ and $\tau^*$.  

\subsection{Extension of Representations} \label{sec:extendingtau}

We continue the notation of the previous section, where $\tau$ is an absolutely irreducible representation of $\Lambda_L$ over $k$.  We need to lift this to a representation over $\O$ and extend it to a representation of $\Gamma_{L,\tau}$.  We will have to do something extra for the representation to be compatible with a pairing, depending on how $\tau$ and $\tau^*$ are related.

In {\bf Case 1}, we ignore the pairing.  Lemma 2.4.11 of \cite{cht08} lets us pick a $\O[\Gamma_{L,\tau}]$-module $\tildetau$ that is a free $\O$-module and reduces to $\tau$.  In this case, $\tildetau^\vee$ is a free $\O$-module reducing to $\tau^*$.

In {\bf Case 2}, from Lemma~\ref{lem:signsymmetricpairing} it follows that $\tau$ is a symplectic or orthogonal representation.  We will adapt the $\GL_n$-technique of \cite{cht08} to produce a symplectic or orthogonal extension $\tildetau$.
Letting $n = \dim \tau$, the representation $\tau$ gives a homomorphism $\tau : \Lambda_L \to G(k)$ where $G$ is $\GSp_n$ or $\GO_n$.

First, we claim that there is a continuous lift $\tildetau : \Lambda_L \to G(W(k))$: without the pairing, this would be Fact~\ref{fact:ringrep}\eqref{ringrep3}.  To also take into account the pairing, consider deformation theory for the residual representation $\tau$.  This is a smooth deformation condition as $H^2(\Lambda_L,\on{ad} \tau)=0$: $\Lambda_L$ has pro-order prime to $p$ and $\on{ad} \tau$ has order a power of $p$.  Therefore the desired lift exists.  It is unique (up to conjugation by an element of $\widehat{G}(\O)$) because the tangent space is zero dimensional as $H^1(\Lambda_L,\on{ad}\tau)=0$.  By considering representations of the group $\Lambda_L/\ker(\tau)$,  we may and do assume that $\ker(\tildetau) = \ker(\tau)$ as subgroups of $\Lambda_L$.

\begin{remark}
For $g \in \Gamma_{L,\tau}$, the $k[\Lambda_L]$-modules $\tau^g \simeq \tau$ are isomorphic.  By uniqueness of the lift, this means that there is $A \in G(\O)$ such that $\tildetau^g(\gamma) = A \tildetau(\gamma) A^{-1}$ for all $\gamma \in \Gamma_{L}$.  Furthermore, $\Gamma_{L,\tau} = \{ g \in \Gamma_L : \tildetau^g \simeq  \tildetau \}$. \label{remark:gtau}
\end{remark}

We will now show how to continuously extend $\tildetau$ to $\Gamma_{L,\tau}$.  
The first step in constructing the extension is to understand the structure of $\Gamma_{L,\tau}$ and $I_L \cap \Gamma_{L,\tau}$, where $I_L$ is the inertia group.

Recall that $T_L = \Gamma_L/\Lambda_L$ is the semi-direct product of $\ZZhat$ and $\ZZ_p$, where $\ZZhat$ is generated by a lift of Frobenius $\phi$ and $\ZZ_p$ is generated by an element $\sigma$, with $\phi \sigma \phi^{-1} = \sigma^q$ where $q = \ell^a$ is the size of the residue field of $L$.

\begin{fact} \label{fact:topsplit}
The exact sequence
\[
 1 \to \Lambda_L \to \Gamma_L \to T_L \to 1
\]
is topologically split, so $\Gamma_L$ is a semi-direct product.
\end{fact}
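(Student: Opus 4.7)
The plan is to construct a continuous section $T_L \to \Gamma_L$ by choosing lifts of the two topological generators $\sigma$ and $\phi$ of $T_L = \ZZhat \ltimes \ZZ_p$ and then adjusting them so the relation $\phi\sigma\phi^{-1}=\sigma^q$ is exactly preserved. The fundamental input is the profinite Schur--Zassenhaus theorem, which applies because $\Lambda_L$ has pro-order prime to $p$ while the relevant quotient $\ZZ_p$ is pro-$p$ (and abelian, hence prosolvable), so the hypotheses on coprime pro-orders and prosolvability of one side are met.

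First I would apply Schur--Zassenhaus to the extension $1 \to \Lambda_L \to I_L \to \ZZ_p \to 1$ to obtain a closed complement $H \cong \ZZ_p$ of $\Lambda_L$ inside $I_L$, and let $\tilde\sigma$ be a topological generator of $H$. Next, I would choose any continuous lift $\tilde\phi_0 \in \Gamma_L$ of $\phi$. Because both $I_L$ and $\Lambda_L$ are normal in $\Gamma_L$, the conjugate $\tilde\phi_0 H \tilde\phi_0^{-1}$ is another closed complement to $\Lambda_L$ in $I_L$. By the conjugacy clause of Schur--Zassenhaus, there exists $\mu \in \Lambda_L$ with $\mu H \mu^{-1} = \tilde\phi_0 H \tilde\phi_0^{-1}$. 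Evaluating on generators yields
\[
 \mu \tilde\sigma \mu^{-1} = \tilde\phi_0 \tilde\sigma^a \tilde\phi_0^{-1}
\]
for some $a \in \ZZ_p^\times$.

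Comparing images modulo $\Lambda_L$, the left side of this identity maps to $\sigma \in T_L$ while the right side maps to $\phi\sigma^a\phi^{-1} = \sigma^{qa}$, forcing $a = q^{-1}$ in $\ZZ_p^\times$. Raising the identity to the $q$-th power inside the abelian group $\tilde\phi_0 H \tilde\phi_0^{-1}$ gives $\mu \tilde\sigma^q \mu^{-1} = \tilde\phi_0 \tilde\sigma \tilde\phi_0^{-1}$, and a direct rearrangement shows that the adjusted lift $\tilde\phi := \mu^{-1}\tilde\phi_0$ satisfies $\tilde\phi \tilde\sigma \tilde\phi^{-1} = \tilde\sigma^q$. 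To conclude, I would observe that $\tilde\sigma$ determines a continuous homomorphism $\ZZ_p \to \Gamma_L$ (since $H$ is pro-$p$) and $\tilde\phi$ determines a continuous homomorphism $\ZZhat \to \Gamma_L$ automatically; the verified relation means they assemble via the universal property of the semi-direct product into a continuous section $T_L \to \Gamma_L$.

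The main obstacle is that mere existence of complements in $I_L$ is not enough, since it only lifts $\sigma$; to end up with the correct exponent $q$ in the conjugation relation, one must exploit the \emph{conjugacy} part of profinite Schur--Zassenhaus and carefully track the induced Frobenius action on the tame quotient to pin down the exponent $a$. This bookkeeping step, rather than the lifting of either generator individually, is where the argument hinges.
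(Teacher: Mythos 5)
Your proof is correct and follows essentially the same route as the source the paper cites for this fact (\cite[Lemma 2.4.10]{cht08}): apply profinite Schur--Zassenhaus to $1 \to \Lambda_L \to I_L \to \ZZ_p \to 1$ (valid since $\Lambda_L$ has pro-order prime to $p$ and the pro-$p$ quotient $\ZZ_p$ is prosolvable), then use the conjugacy of complements to correct a chosen lift of Frobenius by an element $\mu \in \Lambda_L$ so that the relation $\phi\sigma\phi^{-1}=\sigma^q$ lifts exactly. The exponent bookkeeping forcing $a = q^{-1}$ and the final check that $\tildetau\,$ oops---that $\tilde\phi\tilde\sigma\tilde\phi^{-1}=\tilde\sigma^q$ with $\tilde\phi = \mu^{-1}\tilde\phi_0$---are both accurate.
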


\begin{proof}
This is \cite[2.4.10]{cht08}.
\end{proof}

For $T_{L,\tau} := \Gamma_{L,\tau} / \Lambda_L$, this gives a topological splitting of
\[
 1 \to \Lambda_L \to \Gamma_{L,\tau} \to T_{L,\tau} \to 1.
\]
As $\Gamma_{L,\tau}$ is an open subgroup of $\Gamma_L$, we observe that $T_{L,\tau}$ is an open subgroup of $T_L$.  Note that $T_{L,\tau}$ is normal and topologically generated by some powers of $\phi$ and $\sigma$ which will be denoted by $\phi_\tau$ and $\sigma_\tau$ (since any open subgroup of a semidirect product $C \ltimes C'$ for pro-cyclic $C$ and $C'$ is of the form $C_0 \ltimes C'_0$ for open
subgroups $C_0 \subset C$ and $C'_0 \subset C'$).  In particular, using the notation of \S\ref{sec:mrd}
$T_{L,\tau}$ is itself isomorphic to $T_{q'}$ for some $q'$.
The element $\sigma_\tau$ and $\Lambda_L$ together topologically generate $\Gamma_{L,\tau} \cap I_L$.  

Before extending $\tildetau$, we need several technical lemmas.

\begin{lem} \label{lem:centlem}
We have that $\End_{\Lambda_L}(\tildetau) = \O$.
\end{lem}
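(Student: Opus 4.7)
The plan is to exploit the absolute irreducibility of $\tau$ over $k$ and deduce via a Nakayama-type argument that the $\Lambda_L$-endomorphisms of $\tildetau$ cannot be larger than the scalars. The key inputs are: (a) $\tildetau$ is a free $\O$-module on which $\Lambda_L$ acts (through a finite discrete quotient $\Lambda$); (b) $\End_{k[\Lambda_L]}(\tau) = k$ by Schur's lemma, since $k$ was extended so every irreducible $k$-representation of $\Lambda_L$ appearing in $V$ is absolutely irreducible; and (c) $\O$ embeds diagonally into $\End_{\Lambda_L}(\tildetau)$ via scalar multiplications, so we just need to rule out more.

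First I would note that $\End_{\Lambda_L}(\tildetau)$ is an $\O$-submodule of $\End_\O(\tildetau)$, which is a finite free $\O$-module, hence $\End_{\Lambda_L}(\tildetau)$ is itself a finitely generated torsion-free $\O$-module. Next I would verify that the natural reduction map
\[
\End_{\Lambda_L}(\tildetau)/\m\End_{\Lambda_L}(\tildetau) \longrightarrow \End_{\Lambda_L}(\tau) = k
\]
is injective: if $f \in \End_{\Lambda_L}(\tildetau)$ lands in the kernel then $f(\tildetau) \subset \m\tildetau$, so freeness of $\tildetau$ lets us write $f = \pi f'$ for a uniformizer $\pi$ and some $f' \in \End_\O(\tildetau)$; torsion-freeness of $\tildetau$ forces $f'$ to commute with $\Lambda_L$, placing $f$ in $\m\cdot\End_{\Lambda_L}(\tildetau)$. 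The composition of $\O \hookrightarrow \End_{\Lambda_L}(\tildetau) \twoheadrightarrow \End_{\Lambda_L}(\tildetau)/\m$ with this injection is the identity $k \to k$, so $\End_{\Lambda_L}(\tildetau)/\m\End_{\Lambda_L}(\tildetau) = k$, generated by $1$. Nakayama's lemma applied to the finitely generated $\O$-module $\End_{\Lambda_L}(\tildetau)$ then yields $\End_{\Lambda_L}(\tildetau) = \O\cdot 1 = \O$.

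There is essentially no serious obstacle: the whole argument is formal once one has absolute irreducibility of $\tau$ in hand. An alternative route, slightly more in the spirit of the surrounding material, is to apply Lemma~\ref{lem:hom} with $R = \O/\m^n$ and $P = P' = \tildetau/\m^n\tildetau$ (which is $R[\Lambda]$-projective by Fact~\ref{fact:ringrep}(\ref{ringrep1}) since $\Lambda$ has pro-order prime to $p$) to get $\End_{\Lambda_L}(\tildetau/\m^n) \otimes_{\O/\m^n} k = \End_{\Lambda_L}(\tau) = k$, forcing $\End_{\Lambda_L}(\tildetau/\m^n) = \O/\m^n$ for every $n$; passing to the inverse limit and using $\m$-adic completeness of $\End_\O(\tildetau)$ gives $\End_{\Lambda_L}(\tildetau) = \O$. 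Either packaging is short; the only place absolute irreducibility is actually used is to pin down the reduction to be $k$ rather than a larger division algebra over $k$.
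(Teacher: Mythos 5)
Your first argument is correct and is essentially a hands-on version of what the paper does. The paper's own proof is one line: it cites Lemma~\ref{lem:hom} to conclude that $\End_{\Lambda_L}(\tildetau) \tensor{\O} k \to \End_{\Lambda_L}(\tau) = k$ is an isomorphism, and then applies Nakayama exactly as you do. The small difference is how the reduction isomorphism is obtained. Lemma~\ref{lem:hom} is stated for an \emph{Artinian} coefficient ring $R$, whereas here it is being applied with $R = \O$; the paper's citation is therefore a mild abuse (though the proof of that lemma goes through verbatim for $P, P'$ finite free over $\O$). Your first route avoids the issue entirely by proving the needed injectivity of $\End_{\Lambda_L}(\tildetau)/\m \to \End_{\Lambda_L}(\tau)$ directly from freeness and torsion-freeness of $\tildetau$, and then deducing surjectivity from the splitting by scalars, so no appeal to Lemma~\ref{lem:hom} is needed at all. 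Your second route instead applies Lemma~\ref{lem:hom} over the Artinian quotients $\O/\m^n$ and passes to the inverse limit, which is precisely the rigorous version of what the paper's citation intends. Either packaging is a fine and arguably more careful presentation of the argument; the mathematical content is the same, with absolute irreducibility of $\tau$ doing all the work.
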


\begin{proof}
As $\tau$ is absolutely irreducible, $\End_{\Lambda_L}(\tau) = k$.  By Lemma~\ref{lem:hom}, we see that the reduction of $\End_{\Lambda_L}(\tildetau)$ modulo the maximal ideal of $\O$ is $k$, so the map $\O \into \End_{\Lambda_L}(\tildetau)$ is surjective by Nakayama's lemma.
\end{proof}

\begin{lem} \label{lem:pdivide}
 The dimension of $\tau$ is not divisible by $p$.
\end{lem}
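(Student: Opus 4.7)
My plan is to reduce Lemma~\ref{lem:pdivide} to Frobenius' classical divisibility theorem: the dimension of any absolutely irreducible representation of a finite group divides its order.

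First I would note that since $\tau\colon \Lambda_L \to \GL(\tau)(k)$ is continuous and $\GL(\tau)(k)$ is discrete, $\ker(\tau)$ is open in $\Lambda_L$ and the image $\Lambda := \tau(\Lambda_L)$ is a finite discrete quotient of $\Lambda_L$ through which $\tau$ factors. Because $\Lambda_L$ is by construction the kernel of a surjection $I_L \onto \ZZ_p$, its pro-order is coprime to $p$, and hence so is $|\Lambda|$.

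Next I would invoke the classical result that any absolutely irreducible representation of a finite group $\Lambda$ over any field has dimension dividing $|\Lambda|$. Since we have arranged for $\tau$ to be absolutely irreducible over $k$, this gives $\dim \tau \mid |\Lambda|$, and since $p \nmid |\Lambda|$ we conclude $p \nmid \dim \tau$, as desired.

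The only point that deserves care is the transfer of Frobenius' divisibility from characteristic zero to characteristic $p$ coprime to $|\Lambda|$. I would handle this by lifting $\tau$ via Fact~\ref{fact:ringrep}\eqref{ringrep3} to a projective $\O[\Lambda]$-module $\tildetau$ (enlarging $\O$ if needed) and then base-changing to the fraction field $K$: the resulting $K[\Lambda]$-module $\tildetau \tensor{\O} K$ has $K$-dimension equal to $\dim \tau$ and is absolutely irreducible, because reduction modulo $\m$ sets up a dimension-preserving bijection between isomorphism classes of absolutely irreducible $\overline{K}$-representations and absolutely irreducible $\overline{k}$-representations of $\Lambda$ (a consequence of Maschke's theorem and the lifting of idempotents in the two semisimple group algebras). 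The classical Frobenius theorem in characteristic zero then supplies the required divisibility, so this step presents no genuine obstacle; all the required tools are already in play earlier in the paper.
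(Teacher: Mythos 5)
Your proposal is correct and follows essentially the same route as the paper's proof: factor $\tau$ through a finite quotient $\Lambda$ of $\Lambda_L$ with order prime to $p$, lift to a projective $\O[\Lambda]$-module via Fact~\ref{fact:ringrep}\eqref{ringrep3}, pass to the fraction field to obtain an absolutely irreducible characteristic-zero representation of the same dimension, and apply the classical Frobenius divisibility theorem (the paper cites \cite[\S6.5 Corollary 2]{serre77}). Your last paragraph merely spells out in more detail the paper's one-line assertion that the generic fiber is absolutely irreducible because the reduction is.
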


\begin{proof}
 As $\tau$ is continuous and $\Lambda_L$ has pro-order prime to $p$, the representation $\tau$ factors through a finite discrete quotient $\Lambda$ of $\Lambda_L$ whose order is prime to $p$.  Such a representation is the reduction of a projective $\O[\Lambda]$-module by Fact~\ref{fact:ringrep}\eqref{ringrep3}.  Inverting $p$, we obtain a representation of $\Lambda$ in characteristic zero that is absolutely irreducible since the ``reduction'' $\tau$ is absolutely irreducible over $k$.  By \cite[\S6.5 Corollary 2]{serre77}, the dimension of this representation (equal to the dimension of $\tau$) divides the order of $\Lambda$.
\end{proof}

We will now extend $\tildetau$ from $\Lambda_L \subset I_L$ to $\Gamma_{L,\tau}$ by defining it on the topological generators $\sigma_\tau$ and $\phi_\tau$.  We say  that such an extension has \emph{tame determinant} if $\det(\tildetau(\sigma_\tau))$ has finite order which is prime to $p$.  Lemmas~\ref{lem:intermediatetau} and \ref{taulifting} adapt \cite[Lemma 2.4.11]{cht08} and fill in some details.  

\begin{lem} \label{lem:intermediatetau}
There is a unique continuous extension $\tildetau : \Gamma_{L,\tau} \cap I_L \to G(\O)$ with tame determinant.
\end{lem}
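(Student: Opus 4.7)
Because Fact~\ref{fact:topsplit} gives a topological splitting $\Gamma_{L,\tau} \cap I_L \cong \Lambda_L \rtimes \overline{\langle \sigma_\tau\rangle}$ with $\overline{\langle \sigma_\tau\rangle} \cong \ZZ_p$, extending $\tildetau$ amounts to choosing $B := \tildetau(\sigma_\tau) \in G(\O)$ satisfying (i) the conjugation relation $B\tildetau(\gamma)B^{-1} = \tildetau(\sigma_\tau \gamma \sigma_\tau^{-1})$ for $\gamma \in \Lambda_L$, and (ii) the pro-$p$ condition $B^{p^j} \to 1$ as $j \to \infty$, which is equivalent to continuity of the induced map $\overline{\langle \sigma_\tau\rangle} \to G(\O)$. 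By Remark~\ref{remark:gtau} some $A \in G(\O)$ satisfies (i), and any other candidate differs from $A$ by an element centralizing $\tildetau(\Lambda_L)$. By Lemma~\ref{lem:centlem}, combined with the fact that scalars lie in the similitude group $G$, this centralizer is exactly $\O^\times$. So every candidate extension has the form $B = Ac$ for a unique $c \in \O^\times$, and the task reduces to pinning down $c$.

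To handle continuity first, note that the action of $\sigma_\tau$ by conjugation on the finite prime-to-$p$ quotient $\Lambda_L/\ker(\tildetau)$ is a continuous homomorphism from $\overline{\langle \sigma_\tau\rangle} \cong \ZZ_p$, so it factors through a finite cyclic $p$-group, necessarily of order $p^a$ for some $a$. Consequently $A^{p^a}$ centralizes $\tildetau(\Lambda_L)$ and equals a scalar $\lambda \in \O^\times$, again by Lemma~\ref{lem:centlem}. Since scalars commute with $A$, $(Ac)^{p^a} = \lambda c^{p^a}$, so (ii) is equivalent to $\lambda c^{p^a} \in 1+\m$ (the pro-$p$ part of $\O^\times$). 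Because the $p^a$-th power map is a bijection on the finite prime-to-$p$ order group $k^\times$, one can solve $\overline{c}_0^{p^a} = \overline{\lambda}^{-1}$ in $k^\times$ and lift to $c_0 \in \O^\times$ via Teichm\"uller; then $B_0 := Ac_0$ satisfies the continuity condition.

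Finally, continuity of $B$ forces $\det(B) \in 1+\m$, while tame determinant demands $\det(B)$ be a root of unity of order prime to $p$, so together they force $\det(B) = 1$. Writing $B = B_0 \eta$ with $\eta \in \O^\times$, continuity requires $\eta \in 1+\m$, and the determinant condition is equivalent to $\eta^{\dim \tau} = \det(B_0)^{-1} \in 1+\m$. By Lemma~\ref{lem:pdivide} the integer $\dim \tau$ is prime to $p$, so the $(\dim \tau)$-th power map is a bijection on the pro-$p$ group $1+\m$, yielding a unique such $\eta$, hence a unique $B$. I expect the main obstacle to be this reconciliation of the pro-$p$ continuity constraint with the prime-to-$p$ tame determinant constraint: only by leveraging Lemma~\ref{lem:pdivide} to invert the $(\dim \tau)$-th power on $1+\m$ can both be simultaneously satisfied, and this is what makes existence and uniqueness work out.
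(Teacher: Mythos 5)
Your proof is correct and takes essentially the same approach as the paper: produce $A$ via Remark~\ref{remark:gtau}, observe that $A^{p^a}$ is a scalar in $\O^\times$ by Lemma~\ref{lem:centlem}, and then adjust $A$ by a scalar using the decomposition of $\O^\times$ into Teichm\"uller lifts and $1$-units together with Lemma~\ref{lem:pdivide}. The only cosmetic difference is that you observe up front that continuity together with tame determinant forces $\det(B)=1$, whereas the paper distributes this observation between the existence and uniqueness halves of the argument.
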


\begin{proof}  
A continuous extension of $\tildetau$ to $\Gamma_{L,\tau} \cap I_L$ is determined by its value on $\sigma_\tau$.  As $\sigma_\tau \in \Gamma_{L,\tau}$, in light of Remark~\ref{remark:gtau} there is an $A \in G(\O)$ such that for $g \in \Lambda_L$ we have
\[
 \tildetau(\sigma_\tau g \sigma_\tau^{-1} ) = A \tildetau(g) A^{-1}.
\]
We would like to send $\sigma_\tau$ to the element $A$.  For an appropriate modification of $A$ (still lying in $G(\O)$), this will produce a continuous extension with tame determinant.  
As $\sigma_\tau$ is a topological generator for a group isomorphic to $\ZZ_p$, the continuity of the extension with $\sigma_\tau \mapsto A$ is equivalent to some $p$-power of $A$ having trivial reduction.   We wish to show that there is a unique choice of such $A$ that also makes the extension have tame determinant.

We will first show that some power $A^{p^b}$ lies in the centralizer of the image $\tildetau(\Lambda_L)$.  Consider the conjugation action of $\langle \sigma_\tau \rangle$ on $\Lambda_L$.  As $\ker \tildetau = \ker \tau$ is a normal subgroup of $\Gamma_{L,\tau}$ (if $g \in \Gamma_{L,\tau}$ and $\tau(h) = 1$, then $\tau^g(h)$ is conjugate to $\tau(h) = 1$ by Remark~\ref{remark:gtau}) we get an action of $\langle \sigma_\tau \rangle$ on $\Lambda_L / \ker \tau \simeq \tau (\Lambda_L)$.  
  The action is continuous, so there is a power $p^b$ such that for all $g \in \Lambda_L$ we have
\[
 \tau(\sigma_\tau^{p^b} g \sigma_\tau^{-p^b}) = \tau(g).
\]
As $\ker \tildetau = \ker \tau$, we see that
\[
 A^{p^b} \tildetau (g) A^{-p^b} = \tildetau(\sigma_\tau^{p^b} g \sigma_\tau^{-p^b}) = \tildetau(g).
\]
Therefore $A^{p^b}$ lies in the centralizer of $\tildetau(\Lambda_L)$ in $G(\O)$.

By Lemma~\ref{lem:centlem}, this centralizer is isomorphic to $\O^\times$.  We claim that by multiplying $A$ by some unit in $\O$, we can arrange for the continuous extension $\tildetau$ to exist and have tame determinant.  We will use the fact that an element of $\O^\times$ is the product of a $1$-unit and a Teichmuller lift of an element of $k^\times$. 
As $A^{p^b} \in \O^\times$ and the $p$th power map is an automorphism of $k^\times$, we may multiply $A$ by a unit scalar so that $A^{p^b}$ reduces to the identity matrix.  By Lemma~\ref{lem:pdivide}, the dimension $n$  of $\tau$ is prime to $p$ so we may multiply $A$ by a $1$-unit so that $\det(A)$ has finite order prime to $p$.  Sending $\sigma_\tau$ to this particular $A$ gives a continuous extension with tame determinant.

Let's show this extension is unique.  Any extension must send $\sigma_\tau$ to an element of the form $w A$ for $w \in \O^\times$ (the centralizer of the image $\tildetau(\Lambda_L)$).  By continuity, there is a power $p^b$ such that $(wA)^{p^b}$ reduces to the identity.  This means that $w^{p^b}$ reduces to the identity, and hence that $w$ reduces to the identity.  On the other hand, $\det(w A)\det(A)^{-1} = w^n$.  The left side has finite order that is relatively prime to $p$, so $w^n$ does too.  This forces $w^n=1$ since its reduction is $1$.  
But as $n$ is prime to $p$ (Lemma~\ref{lem:pdivide}), the only $n$th roots of unity in $\O^\times$ are Teichmuller lifts.  Therefore $w=1$.
\end{proof}

\begin{lem} \label{taulifting}
 There is a continuous extension $\tildetau : \Gamma_{L,\tau} \to G(\O)$. 
\end{lem}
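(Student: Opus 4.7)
The plan is to choose $B := \tildetau(\phi_\tau) \in G(\O)$ so that conjugation by $B$ implements conjugation by $\phi_\tau$ on $\tildetau(\Gamma_{L,\tau} \cap I_L)$, and then to exploit the uniqueness of the tame-determinant extension from Lemma~\ref{lem:intermediatetau} to make the required compatibility automatic. By Fact~\ref{fact:topsplit}, $T_{L,\tau}$ lifts to a subgroup of $\Gamma_{L,\tau}$ inside which the relation $\phi_\tau \sigma_\tau \phi_\tau^{-1} = \sigma_\tau^{q'}$ holds exactly (with $q'$ such that $T_{L,\tau} \simeq T_{q'}$), and by normality of $I_L$ there is a semidirect product decomposition $\Gamma_{L,\tau} = \langle \phi_\tau \rangle \ltimes (\Gamma_{L,\tau} \cap I_L)$. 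Thus a continuous extension is determined by a single choice of $B$, and continuity of the resulting map is automatic since $G(\O)$ is profinite ($k$ being finite).

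To find $B$, first apply Remark~\ref{remark:gtau} to $\phi_\tau \in \Gamma_{L,\tau}$: there exists $B_0 \in G(\O)$ such that
\[\tildetau(\phi_\tau^{-1} g \phi_\tau) \;=\; B_0 \, \tildetau(g) \, B_0^{-1} \qquad \text{for all } g \in \Lambda_L.\]
We upgrade this identity from $\Lambda_L$ to all of $\Gamma_{L,\tau} \cap I_L$ as follows. Define $\rho' : \Gamma_{L,\tau} \cap I_L \to G(\O)$ by $\rho'(g) := B_0^{-1} \tildetau(\phi_\tau^{-1} g \phi_\tau) B_0$. Then $\rho'$ agrees with $\tildetau$ on $\Lambda_L$ by construction. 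Moreover, $\rho'$ has tame determinant: since $\phi_\tau^{-1} \sigma_\tau \phi_\tau$ equals $\sigma_\tau^{1/q'}$ inside the $\ZZ_p$-group topologically generated by $\sigma_\tau$, we have $\det \rho'(\sigma_\tau) = \det(\tildetau(\sigma_\tau))^{1/q'}$, which is a root of unity of order prime to $p$ (because $\det(\tildetau(\sigma_\tau))$ was arranged to have finite prime-to-$p$ order in Lemma~\ref{lem:intermediatetau}, and raising such a root of unity to a $p$-adic power stays inside the same finite cyclic group since $p \nmid N$ makes $\ZZ_p \to \ZZ/N\ZZ$ well-defined). The uniqueness assertion of Lemma~\ref{lem:intermediatetau} then yields $\rho' = \tildetau$ on all of $\Gamma_{L,\tau} \cap I_L$, which is the full conjugation identity
\[\tildetau(\phi_\tau^{-1} g \phi_\tau) \;=\; B_0 \, \tildetau(g) \, B_0^{-1} \qquad \text{for all } g \in \Gamma_{L,\tau} \cap I_L.\]

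Setting $\tildetau(\phi_\tau) := B_0$ now extends $\tildetau$ to a continuous homomorphism $\Gamma_{L,\tau} \to G(\O)$ via the semidirect product structure. The main obstacle is the upgrade from $\Lambda_L$ to $\Gamma_{L,\tau} \cap I_L$: one must verify that the tame-determinant hypothesis of Lemma~\ref{lem:intermediatetau} is preserved under the conjugation-twist defining $\rho'$, which is where the normality of $I_L$, the prime-to-$p$ order of $\det(\tildetau(\sigma_\tau))$, and the $p$-adic exponent phenomenon combine. Once that check is passed, the rest follows formally from the semidirect product structure and the profiniteness of $G(\O)$.
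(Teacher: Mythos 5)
Your proposal follows essentially the same route as the paper: produce the conjugating element from Remark~\ref{remark:gtau}, promote the conjugation identity from $\Lambda_L$ to $\Gamma_{L,\tau}\cap I_L$ by invoking the uniqueness of the tame-determinant extension in Lemma~\ref{lem:intermediatetau}, and then define the image of $\phi_\tau$ using the semidirect product structure (the paper phrases the continuity point via the finite-order reduction of $A$, you via profiniteness of $G(\O)$; both are fine). Two small repairs are needed. First, your justification of the tame-determinant check is wrong as stated: there is no nontrivial homomorphism $\ZZ_p \to \ZZ/N\ZZ$ when $p \nmid N$ (as $\ZZ_p$ is pro-$p$), so "raising to a $p$-adic power via $\ZZ_p \to \ZZ/N\ZZ$" is not meaningful. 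The fact you need is nonetheless true: $x \mapsto \det\tildetau(\sigma_\tau^x)$ is a continuous homomorphism from $\ZZ_p$ to $\O^\times$, so its image lies in the closure of $\langle \det\tildetau(\sigma_\tau)\rangle$, a finite group of prime-to-$p$ order (indeed such a character is forced to be trivial, being a continuous homomorphism from a pro-$p$ group to a group of order prime to $p$); hence $\det\rho'(\sigma_\tau)$ has finite prime-to-$p$ order. Alternatively you can sidestep the exponent $1/q'$ entirely by conjugating by $\phi_\tau$ rather than $\phi_\tau^{-1}$, as the paper does, so that the relevant exponent is the integer $q'$. Second, with your convention $\tildetau(\phi_\tau^{-1} g \phi_\tau) = B_0\,\tildetau(g)\,B_0^{-1}$, the homomorphism property on the semidirect product requires $\tildetau(\phi_\tau g \phi_\tau^{-1}) = \tildetau(\phi_\tau)\tildetau(g)\tildetau(\phi_\tau)^{-1}$, which forces $\tildetau(\phi_\tau) := B_0^{-1}$, not $B_0$; this is a harmless slip but should be fixed.
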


\begin{proof}
We extend $\tildetau$ in Lemma~\ref{lem:intermediatetau} continuously to $\Gamma_{L,\tau}$ by defining it on $\phi_\tau$.  As $\phi_\tau \in \Gamma_{L,\tau}$, by Remark~\ref{remark:gtau} there is an element $A \in G(\O)$ conjugating $\tildetau : \Lambda_L \to G(\O)$  to $\tildetau^{\phi_\tau} : \Lambda_L \to G(\O)$.  Each has a unique extension to a continuous morphism from $I_L \cap \Gamma_{L,\tau}$ to $G(\O)$ with tame determinant.  Therefore for $g \in I_L \cap \Gamma_{L,\tau}$ we have
 \[
  \tildetau( \phi_\tau g \phi_\tau^{-1}) = A \tildetau(g) A^{-1}
 \]
 since the right side has the same (tame) determinant as $\tildetau$ on $I_L \cap T_\tau$.  We can continuously extend $\tildetau: I_L \cap \Gamma_{L,\tau} \to G(\O)$ by sending $\phi_\tau$ to $A$ since $A$ has reduction with finite order.
\end{proof}

This gives the desired lift and extension of $\tau$ in the case that $\tau \simeq \tau^*$.

In {\bf Case 3}, $\tau$ is conjugate to $\tau^*$ but not isomorphic.  The argument follows the same structure as the previous case, but we make a few modifications to treat $\tau \oplus \tau^*$ together. In particular, we can pick a copy of the $k[\Lambda_L]$-module $\tau$ inside $V$ and a copy of $\tau^* \simeq \tau^\vee$ inside $V$ such that the pairing restricted to $\tau \oplus \tau^*$ is perfect.  It is sign-symmetric with sign $\epsilon_{\tau \oplus \tau^*}$.

Define $\Gamma_{L,\tau \oplus \tau^*} = \{ g \in \Gamma_L : (\tau \oplus \tau^*)^g \simeq \tau \oplus \tau^*\}$.  It contains $\Gamma_{L,\tau}$ with index $2$, as conjugation either preserves $\tau$ and $\tau^*$ or swaps them.  
Arguing as in the paragraph after Fact~\ref{fact:topsplit}, we obtain a split exact sequence
\[
 0 \to \Lambda_L \to \Gamma_{L,\tau \oplus \tau^*} \to T_{L,\tau \oplus \tau^*} \to 1
\]
where $T_{L,\tau \oplus \tau^*}$ is an open normal subgroup of $T_L$ topologically generated by some powers of $\phi$ and $\sigma$ which we denote by $\phi_{\tau\oplus \tau^*}$ and $\sigma_{\tau \oplus \tau^*}$.  We may arrange that either
\begin{itemize}
 \item {\bf Case 3a:} $\phi_{\tau\oplus \tau^*}^2 = \phi_\tau$ and $\sigma_{\tau \oplus \tau^*} = \sigma_{\tau}$ or
 \item {\bf Case 3b:} $\phi_{\tau\oplus \tau^*} = \phi_\tau $ and $ \sigma_{\tau \oplus \tau^*}^2 = \sigma_{\tau}$ .
\end{itemize}

In {\bf Case 3a},  we begin by lifting $\tau$ to $\O$ as a representation of $\Lambda_L$: as before, we do this using the fact that the pro-order of $\Lambda_L$ is prime to $p$, and obtain a lift $\tildetau$ unique up to isomorphism.  We extend $\tildetau$ to be a representation of $\Gamma_{L,\tau} \cap I_L$ by defining it on $\sigma_{\tau}$ using the $\GL_n$-version of Lemma~\ref{lem:intermediatetau}, \cite[Lemma 2.4.11]{cht08}.  There it is shown all such extensions are unique up to equivalence.  In particular, $\tildetau$ and $(\tildetau^{\phi_{\tau \oplus \tau^*}})^\vee$ are isomorphic $\O[\Gamma_{L,\tau} \cap I_L]$-modules.  We can use this to define a sign-symmetric perfect pairing on $\tildetau \oplus \tildetau^{\phi_{\tau \oplus \tau^*}}$ that is compatible with the action of $\Gamma_{L,\tau} \cap I_L$ and $\phi_{\tau \oplus \tau^*}$, hence of $\Gamma_{L,\tau \oplus \tau^*}$.

In {\bf Case 3b}, as $\tau^\vee$ and $\tau^{\sigma_{\tau \oplus \tau^*}}$ are isomorphic $k[\Lambda_L]$-modules it follows that $\tildetau^\vee$ and $\tildetau^{\sigma_{\tau \oplus \tau^*}}$ are isomorphic $\O[\Lambda_L]$-modules.  In particular, this isomorphism gives a natural way to define a sign-symmetric perfect pairing on $M=\tildetau \oplus \tildetau^{\sigma_{\tau \oplus \tau^*}}$ lifting the residual one.  This pairing is compatible with the action of $\Gamma_{L,\tau \oplus \tau^*} \cap I_L$ (which is generated by $\Lambda_L$ and $\sigma_{\tau \oplus \tau^*}$).  Finally, we claim that $M$ and $M^{\phi_\tau}$ are isomorphic.  As $\phi_\tau \in \Gamma_{L,\tau}$ preserves $\tau$, acting by $\phi_\tau$ gives an isomorphism $\overline{\psi} : \overline{M} \simeq \overline{M}^{\phi_\tau}$ of $k[\Lambda_L]$-modules such that $\overline{\psi}(\tau) = \tau^{\phi_\tau}$.  By uniqueness of the lift of $\tau$ as a $\O[\Lambda_L]$-module, we obtain an isomorphism $\psi_\tau$ of $\tildetau$ and $\tildetau^{\phi_\tau}$ and hence an isomorphism $\psi : M \simeq M^{\phi_\tau}$ via the identifications
\[
 \tildetau^{\sigma_{\tau \oplus \tau^*}} \simeq \tildetau^\vee \overset{ \psi_\tau^{-1}} \simeq (\tildetau^{\phi_\tau^{-1}})^\vee \simeq (\tildetau^\vee)^{\phi_\tau} \simeq (\tildetau^{\sigma_{\tau \oplus \tau^*}})^{\phi_\tau} .
\]
This isomorphism is compatible with the pairing.  The key observation is that for $m \in \tildetau$ and $f \in \tildetau^\vee \simeq \tildetau^{\sigma_{\tau \oplus \tau^*}}$, we have that
\[
 \langle \psi(m), \psi(f) \rangle_M = \langle \psi_\tau(m), f \circ \psi_\tau^{-1} \rangle_M = f( \psi_\tau^{-1} (\psi_\tau(m))) =  f(m) = \langle m, f \rangle_M.
\]
Then proceed as in the proof of Lemma~\ref{taulifting}, defining an image of $\phi_\tau$ using the isomorphism $\psi$.

In conclusion, we have shown:

\begin{lem} \label{lem:taupluslifting}
 In case 3, there exists an $\O[\Gamma_{L,\tau \oplus \tau^*}]$-module $\widetilde{\tau \oplus \tau^*}$ with pairing lifting $\tau \oplus \tau^*$ together with its pairing.
\end{lem}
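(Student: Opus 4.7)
The plan is to build $\widetilde{\tau \oplus \tau^*}$ together with its pairing by splitting into the two sub-cases 3a and 3b identified above, according to whether conjugation by Frobenius or by inertia swaps $\tau$ with $\tau^*$. In each case, I will (i) lift $\tau$ to a free $\O[\Lambda_L]$-module $\tildetau$ using Fact~\ref{fact:ringrep}\eqref{ringrep3} (which applies since $\Lambda_L$ has pro-order prime to $p$), (ii) extend the $\Lambda_L$-action to successively larger subgroups of $\Gamma_{L,\tau \oplus \tau^*}$ using the $\GL_n$-analog of Lemma~\ref{lem:intermediatetau} (that is, \cite[Lemma 2.4.11]{cht08}) for the inertial part and then an $A$-conjugation argument for Frobenius as in Lemma~\ref{taulifting}, and (iii) obtain the perfect sign-symmetric pairing by transporting the residual duality through the uniqueness of these lifts.

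In Case 3a, where $\phi_{\tau \oplus \tau^*}^2 = \phi_\tau$ and $\sigma_{\tau \oplus \tau^*} = \sigma_\tau$, I first extend $\tildetau$ to a continuous representation of $\Gamma_{L,\tau} \cap I_L$ with tame determinant. Because $\phi_{\tau \oplus \tau^*}$ swaps $\tau$ with $\tau^*$ on the residual level, $\tildetau$ and $(\tildetau^{\phi_{\tau \oplus \tau^*}})^\vee$ are two extensions of the same $\Lambda_L$-representation, so uniqueness of the tame-determinant extension forces an isomorphism $\tildetau \simeq (\tildetau^{\phi_{\tau \oplus \tau^*}})^\vee$ as $\O[\Gamma_{L,\tau} \cap I_L]$-modules. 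Setting $M := \tildetau \oplus \tildetau^{\phi_{\tau \oplus \tau^*}}$ and using this isomorphism to pair the two summands gives a perfect pairing of the right sign compatible with $\Gamma_{L,\tau} \cap I_L$; extending by $\phi_{\tau \oplus \tau^*}$, which on $M$ simply swaps the summands up to the duality, produces the required $\Gamma_{L,\tau \oplus \tau^*}$-equivariant structure.

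In Case 3b, where $\phi_{\tau \oplus \tau^*} = \phi_\tau$ and $\sigma_{\tau \oplus \tau^*}^2 = \sigma_\tau$, the duality can be produced first at the $\Lambda_L$-level: from $\tau^\vee \simeq \tau^{\sigma_{\tau \oplus \tau^*}}$ and uniqueness of $\tildetau$ one obtains $\tildetau^\vee \simeq \tildetau^{\sigma_{\tau \oplus \tau^*}}$ as $\O[\Lambda_L]$-modules, so $M := \tildetau \oplus \tildetau^{\sigma_{\tau \oplus \tau^*}}$ carries a natural sign-symmetric perfect pairing lifting the residual one and compatible with the action of $\Gamma_{L,\tau \oplus \tau^*} \cap I_L$. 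The main obstacle, which I anticipate as the hardest step, is extending by $\phi_\tau$ in a way that respects both the module structure and the pairing simultaneously: one produces $\psi_\tau : \tildetau \simeq \tildetau^{\phi_\tau}$ by uniqueness and combines it with the duality to define a candidate isomorphism $\psi : M \simeq M^{\phi_\tau}$, and one must then verify the identity $\langle \psi(m), \psi(f)\rangle_M = \langle m, f\rangle_M$ for $m \in \tildetau$ and $f \in \tildetau^\vee$ by a direct chase unwinding how $\psi_\tau$ cancels against its inverse in the evaluation pairing. Once this compatibility is in place, the argument of Lemma~\ref{taulifting} applies verbatim with $\psi$ in place of $A$ to produce the required extension to $\Gamma_{L,\tau \oplus \tau^*}$.
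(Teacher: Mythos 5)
Your proposal is correct and follows essentially the same route as the paper: the paper's proof is exactly the case-by-case discussion of Cases 3a and 3b preceding the lemma, lifting $\tau$ via the prime-to-$p$ pro-order of $\Lambda_L$, invoking the uniqueness statements (the $\GL_n$-version of Lemma~\ref{lem:intermediatetau}, i.e.\ \cite[Lemma 2.4.11]{cht08}, in Case 3a and uniqueness of the $\O[\Lambda_L]$-lift in Case 3b) to produce the duality isomorphisms defining the pairing on $\tildetau \oplus \tildetau^{\phi_{\tau\oplus\tau^*}}$ resp.\ $\tildetau \oplus \tildetau^{\sigma_{\tau\oplus\tau^*}}$, checking the compatibility $\langle \psi(m),\psi(f)\rangle_M = \langle m,f\rangle_M$ by the same direct computation, and then extending to Frobenius as in Lemma~\ref{taulifting}. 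No substantive differences.
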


\subsection{Lifts with Pairings} \label{sec:liftingwithpairings}
We continue the notation of \S\ref{sec:decomposing}, and analyze how the duality pairing interacts with the decomposition \eqref{eq:mdecomposition}.  Recall that we obtained an isomorphism $M \simeq M^\vee$ of $R[\Gamma_L]$-modules which gave isomorphisms $M_\tau \simeq M_{\tau^*}^\vee$ of $R[\Gamma_{L,\tau}]$-modules.  The key point is that for any lift and extension $\tau'$ of $\tau$, the isomorphism of $R[\Lambda_L]$-modules
\[
 \tau' \otimes \Hom_{\Lambda_L}(\tau',M) \to M_\tau
\]
is compatible with the $\Gamma_{L,\tau}$-action.

To do this, it is convenient to break into the cases introduced at the end of \S\ref{sec:decomposing}.  For an irreducible $k[\Lambda]$-module $\tau$ occurring in $V$, note that $(\tau^g)^\vee = (\tau^\vee)^g$ for any $g \in \Gamma_L$, so if $\tau \simeq \tau^*$ then $\tau^g \simeq (\tau^g)^*$.  We let
\begin{itemize}
 \item $\Sigma_n$ denote the set of $\Gamma_L$-conjugacy classes of such $\tau$ for which $\tau$ is not conjugate to $\tau^*$;
 \item $\Sigma_e$ denote the set of $\Gamma_L$-conjugacy classes of such $\tau$ for which $\tau \simeq \tau^*$;
 \item $\Sigma_c$ denote the set of $\Gamma_L$-conjugacy classes of such $\tau$ for which $\tau^*$ is conjugate to $\tau$ but $\tau \not \simeq \tau^*$.
\end{itemize}
From~\eqref{eq:mdecomposition}, we obtain a decomposition
\begin{equation}
 M = \bigoplus_{\tau \in \Sigma_n} \Ind_{\Gamma_{L,\tau}}^{\Gamma_L} \left(\tau'  \otimes W_\tau  \right) \oplus \bigoplus_{\tau \in \Sigma_e} \Ind_{\Gamma_{L,\tau}}^{\Gamma_L}\left(\tau'  \otimes W_\tau \right) \oplus \bigoplus_{\tau \in \Sigma_c} \Ind_{\Gamma_{L,\tau}}^{\Gamma_L}\left(\tau' \otimes W_\tau \right)
\end{equation}
where $\tau'$ is any lift and extension of $\tau$ to $\Gamma_\tau$ and $W_\tau = \Hom_{\Lambda}(\tau',M)$ is a representation of $T_{L,\tau}$.  Note that $W_\tau$ is free as an $R$-module (since $M$ and $\tau'$ are, with $\tau' \neq 0$ and $R$ local), and hence that $W_\tau$ is tamely ramified of the type considered in \S\ref{sec:mrtame}.  

We may rewrite this to make use of the special extensions constructed in \S\ref{sec:extendingtau}.  In particular, for $\tau \in \Sigma_c$ we rewrite 
\[
 \Ind_{\Gamma_{L,\tau}}^{\Gamma_L}\left(\tau'  \otimes W_\tau \right) = \Ind_{\Gamma_{L,\tau \oplus \tau^*}}^{\Gamma_L} \left( \widetilde{\tau \oplus \tau^*} \otimes W_{\tau \oplus \tau^*} \right).                                                                                                                                                                                                                                                                                                                                                                                                                                                                                                                                                                                                  \]
This uses the notation and results from Case 3 in \S\ref{sec:extendingtau}, in particular the fact that $\tau \oplus \tau^*$ is an irreducible representation of the group $\Lambda_L'$ generated by $\Lambda_L$ and a $g \in \Gamma_L$ with $\tau^* \simeq \tau^g$, and the definition $W_{\tau \oplus \tau^*} : = \Hom_{\Lambda'_L}(\widetilde{\tau \oplus \tau^*},M)$.   Note that $W_{\tau \oplus \tau^*}$ is a representation of $T_{L,\tau \oplus \tau^*}$, which is a subgroup of $T_L = \Gamma_L / \Lambda_L$, hence of the form $T_q$ as considered in \S\ref{sec:mrtame}.  Using the extensions $\tildetau$ and $\widetilde{\tau \oplus \tau^*}$ from Cases $1$ and $2$ from \S\ref{sec:extendingtau}, we obtain a decomposition
\begin{align} \label{eq:modifieddecomposition}
 M = \bigoplus_{\tau \in \Sigma_n} \Ind_{\Gamma_{L,\tau}}^{\Gamma_L} \left(\tildetau  \otimes W_\tau  \right) \oplus \bigoplus_{\tau \in \Sigma_e} \Ind_{\Gamma_{L,\tau}}^{\Gamma_L}\left(\tildetau  \otimes W_\tau \right) \oplus \bigoplus_{\tau \in \Sigma_c} \Ind_{\Gamma_{L,\tau \oplus \tau^*}}^{\Gamma_L} \left( \widetilde{\tau \oplus \tau^*} \otimes W_{\tau \oplus \tau^*} \right)  .                              
\end{align}

Now let $M'$ be another $R[\Gamma_L]$-module that is finite free over $R$ such that the irreducible representations of $\Lambda_L$ occurring in $V':= M'/\m M'$ are among the same $\tau$'s, so
\[
 M' = \bigoplus_{\tau \in \Sigma_n} \Ind_{\Gamma_{L,\tau}}^{\Gamma_L} \left(\tildetau  \otimes W'_\tau  \right) \oplus \bigoplus_{\tau \in \Sigma_e} \Ind_{\Gamma_{L,\tau}}^{\Gamma_L}\left(\tildetau  \otimes W'_\tau \right) \oplus \bigoplus_{\tau \in \Sigma_c} \Ind_{\Gamma_{L,\tau \oplus \tau^*}}^{\Gamma_L} \left( \widetilde{\tau \oplus \tau^*} \otimes W'_{\tau \oplus \tau^*} \right).
\]

\begin{lem} \label{lem:schur}
The natural map
\[
\resizebox{\textwidth}{!} {$\displaystyle
 \bigoplus_{\tau \in \Sigma_n} \Hom_{T_{L,\tau}}(W_\tau,W'_\tau ) \oplus \bigoplus_{\tau \in \Sigma_e} \Hom_{T_{L,\tau}}(W_\tau,W'_\tau ) \oplus \bigoplus_{\tau \in \Sigma_c} \Hom_{T_{L,\tau \oplus \tau^*}}(W_{\tau \oplus \tau^*},W'_{\tau \oplus \tau^*} )   \to \Hom_{\Gamma_L}(M,M')$}
\]
is an isomorphism.
\end{lem}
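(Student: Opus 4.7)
The plan is to distribute $\Hom_{\Gamma_L}(-,-)$ over the direct sum decompositions of $M$ and $M'$, analyze each resulting pair of summands via Frobenius reciprocity together with Mackey's formula, and use Lemma~\ref{lem:zero} to kill every ``off-diagonal'' contribution. Write both decompositions uniformly as $\bigoplus_\alpha \Ind^{\Gamma_L}_{H_\alpha}(E_\alpha \otimes W_\alpha)$ where $\alpha$ ranges over orbit representatives in $\Sigma_n \cup \Sigma_e \cup \Sigma_c$, with $H_\alpha$ equal to $\Gamma_{L,\tau}$ or $\Gamma_{L,\tau\oplus\tau^*}$ and $E_\alpha$ the corresponding extension $\tildetau$ or $\widetilde{\tau\oplus\tau^*}$ from \S\ref{sec:extendingtau}. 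Distributing Hom and applying Frobenius reciprocity rewrites the $(\alpha,\beta)$ term as
\[
\Hom_{H_\alpha}\bigl(E_\alpha \otimes W_\alpha,\ \Res^{\Gamma_L}_{H_\alpha} \Ind^{\Gamma_L}_{H_\beta}(E_\beta \otimes W'_\beta)\bigr).
\]

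Mackey's formula further decomposes this restriction-induction as a direct sum indexed by $H_\alpha$-$H_\beta$ double cosets in $\Gamma_L$, each constituent being (induced from an intersection subgroup of) a conjugate $gE_\beta g^{-1} \otimes W'_\beta$. Restricting further to $\Lambda_L$, each such conjugate has residual $\Lambda_L$-module a sum of copies of representations in the $\Gamma_L$-orbit of the irreducibles underlying $\tau_\beta$. Since the extension of $k$ made in \S\ref{sec:decomposing} makes all these irreducible, Lemma~\ref{lem:zero} applies and forces the Hom to vanish unless the residual $\Lambda_L$-orbits on source and target coincide. This kills all terms with $\beta$ in a different $\Gamma_L$-orbit class from $\alpha$ (including cross-terms between $\Sigma_n$, $\Sigma_e$, $\Sigma_c$), and within $\beta = \alpha$ kills every Mackey constituent except the one indexed by the trivial double coset, leaving exactly $\Hom_{H_\alpha}(E_\alpha \otimes W_\alpha,\ E_\alpha \otimes W'_\alpha)$.

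To identify this surviving term with $\Hom_{T_{L,\alpha}}(W_\alpha, W'_\alpha)$, I compute it in two stages. Since $\Lambda_L$ acts trivially on the $W$'s, and since $E_\alpha$, $W_\alpha$, $W'_\alpha$ are finite free over $R$, the standard tensor identity gives
\[
\Hom_{\Lambda_L}(E_\alpha \otimes W_\alpha,\ E_\alpha \otimes W'_\alpha) \;\cong\; \End_{\Lambda_L}(E_\alpha) \otimes_R \Hom_R(W_\alpha, W'_\alpha).
\]
By Lemma~\ref{lem:centlem}, $\End_{\Lambda_L}(\tildetau) = R$ in Cases 1 and 2, while in Case 3 the $\Lambda_L$-endomorphisms of $\widetilde{\tau \oplus \tau^*} = \tildetau \oplus \widetilde{\tau^*}$ form $R \oplus R$ since $\tau \not\simeq \tau^*$ as $\Lambda_L$-modules. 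Taking invariants under $T_{L,\alpha} = H_\alpha/\Lambda_L$: the endomorphism factor is fixed in Cases 1 and 2 (conjugation acts trivially on scalars), yielding $\Hom_{T_{L,\tau}}(W_\tau, W'_\tau)$; in Case 3 the non-trivial coset of $\Gamma_{L,\tau\oplus\tau^*}/\Gamma_{L,\tau}$ swaps the two factors of $R \oplus R$, cutting the endomorphism factor down to the diagonal $R$ and yielding $\Hom_{T_{L,\tau\oplus\tau^*}}(W_{\tau\oplus\tau^*}, W'_{\tau\oplus\tau^*})$.

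The main obstacle is bookkeeping rather than any single hard step: verifying that the composite of adjunction, Mackey decomposition, and the two tensor identifications recovers precisely the natural map in the statement, and handling Case 3 uniformly despite $E_\alpha$ being reducible as a $\Lambda_L$-module there. The tensor identity for $\Hom_{\Lambda_L}$ can be checked by reducing modulo $\m$ via Lemma~\ref{lem:hom} (both sides have the same residue by the classical Schur computation over $k$, and Nakayama combined with $R$-freeness upgrades this).
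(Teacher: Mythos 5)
Your proposal is correct and follows essentially the same route as the paper's proof: pass to finite quotients, kill all cross terms with Lemma~\ref{lem:zero} after Frobenius reciprocity (your Mackey step is just an explicit version of this), and identify the diagonal term via $\End_{\Lambda}(\tildetau)=R$ (Schur over $k$, upgraded by Lemma~\ref{lem:hom} and Nakayama) followed by taking $T_{L,\tau}$-invariants. The only caveat is in Case 3, where invariants do not literally distribute over the two tensor factors as your phrasing suggests; the conclusion nevertheless holds because the nontrivial coset contains the image of $g\in\Lambda'_L$, which swaps the two copies of $R$ while acting trivially on $\Hom_R(W_{\tau\oplus\tau^*},W'_{\tau\oplus\tau^*})$ --- equivalently, one can argue as the paper implicitly does by computing $\End_{\Lambda'}(\widetilde{\tau\oplus\tau^*})=R$ directly over the larger group and then taking invariants for the action factoring through $T_{L,\tau\oplus\tau^*}$.
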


\begin{proof}
We may immediately pass to working with representations of the finite discrete groups $\Gamma$ and $\Lambda$.  Notice that
\[
  \Hom_{\Gamma}(\Ind^{\Gamma}_{\Gamma_\tau} (M_\tau), \Ind^{\Gamma}_{\Gamma_\tau} ( M'_\tau)) \simeq \Hom_{\Gamma_\tau}( \Ind^{\Gamma}_{\Gamma_\tau} (M_\tau), M'_\tau) \simeq \Hom_{\Gamma_\tau}(M_\tau, M'_\tau)
\]
where the second isomorphism uses that $\Hom_{\Gamma_\tau}(M_{\tau^g}, M'_{\tau})=0$ by Lemma~\ref{lem:zero} when $\tau$ and $\tau^g$ are non-isomorphic.  Furthermore, if $\tau_1$ and $\tau_2$ are not $\Gamma$-conjugate then
\[
 \Hom_{\Gamma}(\Ind^{\Gamma}_{\Gamma_{\tau_1}} (M_{\tau_1}), \Ind^{\Gamma}_{\Gamma_{\tau_2}} ( M'_{\tau_2})) \simeq \Hom_{\Gamma_\tau}( \Ind^{\Gamma}_{\Gamma_{\tau_1}} (M_{\tau_1}) ,M_{\tau_2} )  =0
\]
using Lemma~\ref{lem:zero} as $\tau_1^g$ is not isomorphic to $\tau_2$ for any $g \in \Gamma$.  Then using \eqref{eq:mdecomposition} we see that
\[
 \Hom_{\Gamma}(M,M') = \bigoplus_{[\tau_1],[\tau_2]} \Hom_{\Gamma}(\Ind^{\Gamma}_{\Gamma_{\tau_1}} (M_{\tau_1}), \Ind^{\Gamma}_{\Gamma_{\tau_2}} ( M'_{\tau_2})) = \bigoplus_{[\tau]} \Hom_{\Gamma_\tau}(M_\tau, M'_\tau).
\]

All the irreducible finite-dimensional representations of $\Lambda$ occurring in $V$ and $V'$ are absolutely irreducible over $k$ by design.  For $\tau \in \Sigma_n \cup \Sigma_e$, consider the natural inclusion 
\begin{equation} \label{eq:wtauiso}
 \Hom_R(W_\tau,W'_\tau) \into \Hom_{\Lambda}(\tildetau \otimes W_\tau, \tildetau \otimes W'_\tau) = \Hom_{\Lambda}(\tildetau,\tildetau) \tensor{R} \Hom_R(W_\tau,W'_\tau),
\end{equation}
using that $W_\tau$ and $W'_\tau$ are $R$-free of finite rank and $\Lambda$ acts trivially.  But $R \into \Hom_{\Lambda}(\tildetau,\tildetau)$ is an isomorphism because $\End_{\Lambda}(\tau) = k$ and because surjectivity can be checked modulo $\m_R$ using Lemma~\ref{lem:hom}.  As $M_\tau \simeq \tildetau \otimes W_\tau$, this implies that
\begin{align*}
 \Hom_{\Gamma_\tau}(M_\tau,M'_\tau) = \Hom_\Lambda(M_\tau,M'_\tau)^{T_\tau} &= \Hom_\Lambda(\tildetau \otimes W_\tau, \tildetau \otimes W'_\tau) ^{T_\tau} \\
 &= \Hom_R(W_\tau,W_\tau)^{T_\tau} = \Hom_{T_\tau}(W_\tau,W'_\tau)
 \end{align*}
 where $T_\tau$ is the image of $T_{L,\tau}$ in $\Gamma_\tau$.  An analogous computation in the case $\tau \in \Sigma_c$ completes the proof.
\end{proof} 

We can now consider the duality isomorphism $M \simeq M^\vee$.  By Lemma~\ref{lem:schur}, this is equivalent to a collection of isomorphisms of $R[T_{L,\tau}]$-modules $\varphi_\tau: W_\tau \simeq W_{\tau^*}^\vee$ for $\tau \in \Sigma_e \cup \Sigma_n$ and an isomorphism of $R[T_{L,\tau \oplus \tau^*}]$-modules $\varphi_{\tau} : W_{\tau \oplus \tau^*} \simeq W_{\tau \oplus \tau^*}^\vee$ for $\tau \in \Sigma_c$.  We analyze the cases separately.

In {\bf Case 1} (when $\tau$ is not conjugate to $\tau^*$), note that $\Ind^{\Gamma_L}_{\Gamma_{L,\tau}} M_\tau$ is an isotropic subspace of $M$.  In particular, the perfect sign-symmetric pairing on $\Ind^{\Gamma_L}_{\Gamma_{L,\tau}} M_\tau \oplus \Ind^{\Gamma_L}_{\Gamma_{L,\tau^*}}  M_{\tau^*}$ is equivalent to an isomorphism of $R[\Gamma_{L}]$-modules $$\Ind^{\Gamma_L}_{\Gamma_{L,\tau}} M_\tau \simeq \left(\Ind^{\Gamma_L}_{\Gamma_{L,\tau^*}}  M_{\tau^*} \right)^\vee,$$ which is equivalent to the isomorphism of $R[T_{L,\tau}]$-modules $\varphi_\tau : W_\tau \simeq W_{\tau^*}^\vee$.  (Note that the similitude character $\nu$ is present in the use of the dual.) 

In {\bf Case 2} (when $\tau$ is isomorphic to $\tau^*$), the perfect sign-symmetric pairing on $\Ind^{\Gamma_L}_{\Gamma_{L,\tau}} M_\tau$ is equivalent to an isomorphism $W_\tau \simeq W_\tau^\vee$ of $R[T_{L,\tau}]$-modules.  Thus it gives a pairing $\langle , \rangle_{W_\tau}$ on $W_\tau$ via
\[
 \langle w_1, w_2 \rangle_{W_\tau} := \varphi_\tau(w_1)(w_2).
\]
We claim this pairing is sign-symmetric. 

From \S\ref{sec:extendingtau} we have an isomorphism $\imath: \tildetau \simeq \tildetau^\vee$ of $R[\Gamma_{L,\tau}]$-modules.
As at the end of \S\ref{sec:decomposing}, let $\psi : M \to M^\vee$ be the isomorphism of $R[\Gamma_L]$-modules given by $m \mapsto \langle m , - \rangle_M$, and define $\langle v_1,v_2 \rangle_{\tildetau} := \imath(v_1)(v_2)$.  We have a commutative diagram
\[
\xymatrixcolsep{5pc}
 \xymatrix{
  \tildetau \otimes W_\tau \ar^{\id \otimes \varphi_\tau} [r] \ar[d] &  \tildetau \otimes W_\tau^\vee \ar^{\imath \otimes \id} [r] & \tildetau^\vee \otimes W_\tau^\vee \ar[d] \\
  M_\tau \ar^{\psi} [rr]&  & M_\tau^\vee
 }
\]
The commutativity says that for elementary tensors
$m_i = v_i \otimes w_i \in M_\tau = \tildetau \otimes W_\tau$ we have
\begin{equation} \label{eq:pairingseq}
\begin{split}
 \langle m_1,m_2 \rangle_M = \psi(m_1)(m_2)& = \left( \imath(v_1) \otimes \varphi_\tau(w_1) \right) ( v_2 \otimes w_2) \\
  &= \imath(v_1)(v_2) \cdot \varphi_\tau(w_1)(w_2) = \langle v_1, v_2 \rangle_{\tau} \langle w_1, w_2 \rangle_{W_\tau}.
\end{split}
\end{equation}
The pairings are perfect and $\langle \cdot, \cdot \rangle_\tau$ is $\epsilon_\tau$-symmetric, so the pairing on $W_\tau$ is $\epsilon_{W_\tau}$-symmetric if and only if the pairing on $M_\tau$ is $\epsilon$-symmetric.  We have that $\epsilon = \epsilon_{W_\tau} \epsilon_{\tau}$.  

In {\bf Case 3} ($\tau \in \Sigma_c$), an analogous argument using the isomorphism $\widetilde{\tau \oplus \tau^*} \simeq \widetilde{\tau \oplus \tau^*}^\vee$ of $R[\Gamma_{L,\tau \oplus \tau^*}]$-modules (which define the $\epsilon_{W_{\tau \oplus \tau^*}}$-symmetric pairing on $\widetilde{\tau \oplus \tau^*}$) shows that the pairing induced by $\varphi_{\tau} : W_{\tau \oplus \tau^*} \simeq W_{\tau \oplus \tau^*}^\vee$ is $\epsilon_{\tau \oplus \tau^*}$-symmetric if and only if the pairing on 
\[
 \Ind_{\Gamma_{L,\tau \oplus \tau^*}}^{\Gamma_L} \left( \widetilde{\tau \oplus \tau^*} \otimes W'_{\tau \oplus \tau^*} \right)
\]
induced from the pairing on $M$ is sign-symmetric with sign $\epsilon = \epsilon_{\tau \oplus \tau^*} \epsilon_{W_{\tau \oplus \tau^*}}$.

\subsection{Minimally Ramified Deformations} \label{sec:mrcondition}
We can now define the minimally ramified deformation condition for $\rhobar : \Gamma_L \to G(k)$, under the continuing assumption that we have extended $k$ so all irreducible representations of $\Lambda_L$ occurring in $V$ are absolutely irreducible over $k$.  From~\eqref{eq:modifieddecomposition}, we obtain a decomposition
\begin{equation}
 V =  \bigoplus_{\tau \in \Sigma_n} \Ind_{\Gamma_{L,\tau}}^{\Gamma_L} \left(\tildetau  \otimes \overline{W}_\tau  \right) \oplus \bigoplus_{\tau \in \Sigma_e} \Ind_{\Gamma_{L,\tau}}^{\Gamma_L}\left(\tildetau  \otimes \overline{W}_\tau \right) \oplus \bigoplus_{\tau \in \Sigma_c} \Ind_{\Gamma_{L,\tau \oplus \tau^*}}^{\Gamma_L} \left( \widetilde{\tau \oplus \tau^*} \otimes \overline{W}_{\tau \oplus \tau^*} \right) .
\end{equation}
where $\overline{W}_\tau$ is a representation of $T_{L,\tau}$ over $k$ and $\overline{W}_{\tau \oplus \tau^*}$ is a representation of $T_{L,\tau \oplus \tau^*}$.

If $\tau \in \Sigma_n$, define $\overline{G}_\tau := \underline{\Aut}(\overline{W}_\tau)$.  If $\tau \in \Sigma_e$, there is a sign-symmetric perfect pairing $\langle \cdot , \cdot \rangle_{\overline{W}_\tau}$ on $\overline{W}_\tau$: in that case define $\overline{G}_\tau := \underline{\GAut}(\overline{W}_\tau,\langle \cdot, \cdot \rangle_{\overline{W}_\tau})$.  (The notation $\underline{\GAut}$ means automorphisms preserving the pairing up to scalar.)  If $\tau \in \Sigma_c$, there is a sign-symmetric perfect pairing on $\overline{W}_{\tau \oplus \tau^*}$: in that case define $\overline{G}_{\tau} := \underline{\GAut}(\overline{W}_{\tau\oplus \tau^*},\langle \cdot, \cdot \rangle_{\overline{W}_{\tau \oplus \tau^*}})$.
Make a finite extension of $k$ so that all the pairings are split.   Lift $\overline{G}_\tau$ to a split reductive group $G_\tau$ over $\O$ by lifting the split linear algebra data.

\begin{defn} \label{defn:minimallyramified}
Let $\rho : \Gamma_L \to G(R)$ be a continuous Galois representation lifting $\rhobar$ as above, with associated $R[\Gamma]$-module 
\[
 M = \bigoplus_{\tau \in \Sigma_n} \Ind_{\Gamma_{L,\tau}}^{\Gamma_L} \left(\tildetau  \otimes W_\tau  \right) \oplus \bigoplus_{\tau \in \Sigma_e} \Ind_{\Gamma_{L,\tau}}^{\Gamma_L}\left(\tildetau  \otimes W_\tau \right) \oplus \bigoplus_{\tau \in \Sigma_c} \Ind_{\Gamma_{L,\tau \oplus \tau^*}}^{\Gamma_L} \left( \widetilde{\tau \oplus \tau^*} \otimes W_{\tau \oplus \tau^*} \right) .
\]
We say that $\rho$ is \emph{minimally ramified} with similitude character $\nu$ if each $W_\tau$ and $W_{\tau \oplus \tau^*}$ is minimally ramified in the sense of Definition~\ref{defn:minimallyramifiedtame} as a representation of $T_{L,\tau}$ or $T_{L,\tau \oplus \tau^*}$ valued in the group $G_\tau$ with specified similitude character.  (Note that defining the minimally ramified deformation condition as in \S\ref{sec:mrtame} may require an additional \'{e}tale local extension of $\O$, which as always is harmless for applications.)
\end{defn}

Let $D^{\mr, \nu}_{\rhobar}$ denote the deformation functor for $\rhobar$ with specified similitude character $\nu$, and $\D^{\mr}_{G_\tau}$ (respectively $\D^{\mr,\nu}_{G_\tau}$) denote the deformation functor for $\overline{W}_\tau$ or $\overline{W}_{\tau \oplus \tau^*}$ viewed as a representation valued in $G_\tau$ (respectively with specified similitude character $\nu$).  In particular, letting $r = \dim \overline{W}_\tau$ (or $\dim \overline{W}_{\tau \oplus \tau^*}$ when $\tau \in \Sigma_c$), we have that the adjoint representation $\on{ad} \overline{W}_\tau$ is the Lie algebra of $\overline{G}_\tau$, which is the Lie algebra of $\GSp_r$ or $\GO_r$ when $\tau \in \Sigma_e$ or $\Sigma_c$, and the Lie algebra of $\GL_r$ when $\tau \in \Sigma_n$.  Let $\Sigma'_n$ consist of one representative for each pair of representations $\tau, \tau^* \in \Sigma_n$.
 
\begin{prop} \label{prop:defproduct}
There is a natural isomorphism of functors
\[
 D^{\mr\, \nu}_{\rhobar} \to \prod_{\tau \in \Sigma'_n } D^{\mr}_{G_\tau}  \times \prod_{\tau \in \Sigma_e} D^{\mr \, \nu}_{G_\tau}\times \prod_{\tau \in \Sigma_c} D^{\mr \, \nu}_{G_\tau} .
\]
\end{prop}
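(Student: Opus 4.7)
The plan is to construct the natural transformation and its inverse using the canonical decomposition~\eqref{eq:modifieddecomposition} together with the pairing analysis of \S\ref{sec:liftingwithpairings}. Once those structural ingredients are in place, the proof is essentially bookkeeping, with the only genuine subtlety being the sign-matching in Case~3.

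First I would define the map $D^{\mr, \nu}_{\rhobar} \to \prod D^{\mr}_{G_\tau} \times \prod D^{\mr, \nu}_{G_\tau} \times \prod D^{\mr, \nu}_{G_\tau}$. Given a lift $\rho: \Gamma_L \to G(R)$ of $\rhobar$ with similitude character $\nu$, view it via its associated $R[\Gamma_L]$-module $M$ and apply~\eqref{eq:modifieddecomposition} to extract $W_\tau := \Hom_{\Lambda_L}(\tildetau, M)$ for $\tau \in \Sigma_n \cup \Sigma_e$ and $W_{\tau \oplus \tau^*} := \Hom_{\Lambda_L'}(\widetilde{\tau \oplus \tau^*}, M)$ for $\tau \in \Sigma_c$. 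These are continuous $R$-free representations of $T_{L,\tau}$ (respectively $T_{L,\tau \oplus \tau^*}$) lifting $\overline{W}_\tau$ or $\overline{W}_{\tau \oplus \tau^*}$. The pairing analysis of \S\ref{sec:liftingwithpairings} upgrades $W_\tau$ for $\tau \in \Sigma_e$ and $W_{\tau \oplus \tau^*}$ for $\tau \in \Sigma_c$ to lifts valued in the similitude group $G_\tau$ with the specified similitude character, and pairs $W_\tau$ with $W_{\tau^*}$ for $\tau \in \Sigma_n$ to a $\GL$-valued lift indexed by $\Sigma_n'$. Definition~\ref{defn:minimallyramified} is precisely the statement that $\rho$ is minimally ramified if and only if each factor is, so the map is well-defined on lifts. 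It descends to strict-equivalence classes because a $\widehat{G}(R)$-conjugation of $\rho$ corresponds, via Lemma~\ref{lem:schur}, to a simultaneous $\widehat{G_\tau}(R)$-conjugation on each factor.

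For the inverse, given minimally ramified lifts $W_\tau$ (or $W_{\tau \oplus \tau^*}$) valued in the appropriate $G_\tau$ with similitude character $\nu$ where applicable, define
\[
 M := \bigoplus_{\tau \in \Sigma_n} \Ind_{\Gamma_{L,\tau}}^{\Gamma_L}(\tildetau \otimes W_\tau) \oplus \bigoplus_{\tau \in \Sigma_e} \Ind_{\Gamma_{L,\tau}}^{\Gamma_L}(\tildetau \otimes W_\tau) \oplus \bigoplus_{\tau \in \Sigma_c} \Ind_{\Gamma_{L,\tau \oplus \tau^*}}^{\Gamma_L}(\widetilde{\tau \oplus \tau^*} \otimes W_{\tau \oplus \tau^*}).
\]
The pairings on $\tildetau$ and on $\widetilde{\tau \oplus \tau^*}$ constructed in \S\ref{sec:extendingtau}, combined with the pairings on the $W_\tau$'s and $W_{\tau \oplus \tau^*}$'s via~\eqref{eq:pairingseq}, together with the duality identification of $\tau$-isotypic and $\tau^*$-isotypic pieces in Case~1, assemble to a sign-symmetric perfect pairing on $M$ compatible with the $\Gamma_L$-action. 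This yields a continuous representation $\rho: \Gamma_L \to G(R)$ lifting $\rhobar$ with similitude character $\nu$, which is minimally ramified because each $W_\tau$ is. By Lemma~\ref{lem:schur} and the canonicity of~\eqref{eq:modifieddecomposition}, the forward and inverse constructions are mutually inverse up to the strict-equivalence ambiguity intrinsic to deformations, so the transformation is a bijection on $R$-points; naturality in $R$ is visible from the construction.

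The main obstacle will be verifying the sign calculus in Case~3 so that the reconstruction really lands in $G$-valued (rather than $\GL$-valued) representations. Specifically, the sign of the pairing on $\Ind_{\Gamma_{L,\tau \oplus \tau^*}}^{\Gamma_L}(\widetilde{\tau \oplus \tau^*} \otimes W_{\tau \oplus \tau^*})$ factors as the product $\epsilon_{\tau \oplus \tau^*} \cdot \epsilon_{W_{\tau \oplus \tau^*}}$, and matching this product to the global sign $\epsilon$ of the pairing defining $G$ dictates unambiguously whether $G_\tau$ is a symplectic or orthogonal similitude group; the analogous compatibility in Case~2 is $\epsilon = \epsilon_\tau \cdot \epsilon_{W_\tau}$. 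Once this pinning down of $G_\tau$ is made functorial in each $\tau$, the proposition follows by combining the forward map, the inverse, and the pointwise characterization of minimally ramified from Definition~\ref{defn:minimallyramified}.
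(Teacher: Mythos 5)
Your proposal matches the paper's proof in both structure and substance: the forward map by extracting the $W_\tau$'s from the decomposition \eqref{eq:modifieddecomposition}, the inverse by reassembling $M$ with the pairing built from \eqref{eq:pairingseq} (with the sign identities $\epsilon = \epsilon_\tau\epsilon_{W_\tau}$ and $\epsilon = \epsilon_{\tau\oplus\tau^*}\epsilon_{W_{\tau\oplus\tau^*}}$ already supplied by \S\ref{sec:liftingwithpairings}), and the check that $\Ghat(R)$-conjugation corresponds to componentwise conjugation of the $W_\tau$'s so the identification respects strict equivalence. This is essentially the same argument as in the paper, so no further comment is needed.
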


\begin{proof}
This expresses the decomposition obtained in this section: given a lift $\rho$ of $\rhobar$, we obtain a decomposition of $M$ as in Definition~\ref{defn:minimallyramified}.  Our analysis with pairings shows that when $\tau \in \Sigma_e$, $W_\tau$ is a deformation of $\overline{W}_\tau$ together with its $\epsilon_{W_\tau}$-symmetric perfect pairing.  Likewise, when $\tau \in \Sigma_c$ we know that $W_{\tau \oplus \tau^*}$ is a deformation of $\overline{W}_{\tau \oplus \tau^*}$ together with its $\epsilon_{W_{\tau \oplus \tau^*}}$-symmetric pairing.  When $\tau \in \Sigma_n$, we know $W_\tau \simeq W_{\tau^*}^\vee$.  This gives the natural map: to $\rho \in  D^{\mr\, \nu}_{\rhobar}(R)$ associate the collection of the $W_\tau$ for  $\tau \in \Sigma_e \cup \Sigma_c \cup \Sigma'_n$.

Conversely, given $W_\tau$ for $\tau \in \Sigma_e \cup \Sigma_c \cup \Sigma'_n$, and defining $W_{\tau^*} := W_\tau^\vee$ for $\tau \in \Sigma'_n$ we can define a lift
\[
 M :=  \bigoplus_{\tau \in \Sigma_n} \Ind_{\Gamma_{L,\tau}}^{\Gamma_L} \left(\tildetau  \otimes W_\tau  \right) \oplus \bigoplus_{\tau \in \Sigma_e} \Ind_{\Gamma_{L,\tau}}^{\Gamma_L}\left(\tildetau  \otimes W_\tau \right) \oplus \bigoplus_{\tau \in \Sigma_c} \Ind_{\Gamma_{L,\tau \oplus \tau^*}}^{\Gamma_L} \left( \widetilde{\tau \oplus \tau^*} \otimes W_{\tau \oplus \tau^*} \right) .
\]
as in \eqref{eq:mdecomposition}.  (Note that the groups $\Gamma_{L,\tau}$ depend only on the fixed residual representation $V$.)  For $\tau \in \Sigma_e$, the perfect pairing on the lift $W_\tau$ gives an isomorphism $\varphi_\tau : W_\tau \simeq W_\tau^\vee$ of $R[T_{L,\tau}]$-modules, which  gives a sign-symmetric pairing (with sign $\epsilon_{W_\tau} \epsilon_\tau = \epsilon$) on $\Ind^{\Gamma_L}_{\Gamma_{L,\tau}} (\tildetau \otimes W_\tau)$ (using equation \eqref{eq:pairingseq}).  Likewise, for $\tau \in \Sigma_c$ the sign-symmetric pairing on $W_{\tau \oplus \tau^*}$ gives one on $\Ind_{\Gamma_{L,\tau \oplus \tau^*}}^{\Gamma_L} \left( \widetilde{\tau \oplus \tau^*} \otimes W_{\tau \oplus \tau^*} \right)$.  For $\tau \in \Sigma_n$, we obtain an isomorphism $\varphi_\tau : W_\tau \simeq W_{\tau^*}^\vee$ of $R[T_{L,\tau}]$-modules and hence an $\epsilon$-symmetric perfect pairing on $(\tildetau \otimes W_\tau) \oplus (\tildetau^\vee \otimes W_{\tau^*})$ which gives one on $\Ind^{\Gamma_L}_{\Gamma_{L,\tau}} \left( \tildetau \otimes W_{\tau} \right) \oplus \Ind^{\Gamma_L}_{\Gamma_{L,\tau}} \left( \tildetau^\vee \otimes W_{\tau^*} \right)$.
Putting these together, we obtain a sign-symmetric pairing on $M$; the action of $\Gamma_L$ preserves it up to scalar, giving a continuous homomorphism $\rho : \Gamma_L \to G(R)$.

Finally, we claim that these constructions are compatible with strict equivalence of lifts, giving an identification of the deformation functors.  For $g \in \Ghat(R)$, decompose the $g$-conjugate $\Gamma_L$-representation $M^g$ according to \eqref{eq:modifieddecomposition}.  As $g$ reduces to the identity, it must respect the decomposition into $\tau$-isotypic pieces, so gives automorphisms $g_\tau \in \Aut(W_\tau)$ and $g_\tau \in \Aut(W_{\tau \oplus \tau^*})$.  If $\tau \in \Sigma_e$ or $\Sigma_c$, as $g$ is compatible with the pairing on $M$ we see $g_\tau$ is compatible with the pairing as well.  For $\tau \in \Sigma_e$, the $g_\tau$-conjugate $T_{L,\tau}$-representation $W_\tau^{g_\tau}$ is minimally ramified as minimally ramified lifts of $\overline{W}_\tau$ for the group $T_{L,\tau}$ are a deformation condition, and likewise for $\tau \in \Sigma_c$ and $\tau \in \Sigma'_n$.

Conversely, given $g_\tau \in \Aut(W_\tau)$ reducing to the identity (compatible with the pairing on $W_\tau$ or $W_{\tau \oplus \tau^*}$ if there is one), using \eqref{eq:mdecomposition} and acting on each piece we obtain a lift of the form $M^g$ for  $g \in \Ghat(R)$.  Thus the identification is compatible with strict equivalence.
\end{proof}

\begin{cor} \label{cor:minimallyramified}
Under the assumptions \ref{assumption1}-\ref{assumption4} needed to analyze the tame case, and our standing assumption that all the irreducible representations of $\Lambda_L$ appearing in $V$ are absolutely irreducible over $k$, the minimally ramified deformation condition with fixed similitude character is liftable.  The dimension of the tangent space is $h^0(\Gamma_L,\adzerorho)$.
\end{cor}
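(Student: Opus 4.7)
The plan is to invoke Proposition~\ref{prop:defproduct} to reduce both assertions to the tame case analyzed in Section~\ref{sec:mrtame}, and then carry out a matching cohomological computation.

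For liftability, observe that Proposition~\ref{prop:defproduct} exhibits $D^{\mr,\nu}_\rhobar$ as a product of factors of the form $D^{\mr}_{G_\tau}$ (for $\tau\in\Sigma'_n$, with $G_\tau=\GL$) and $D^{\mr,\nu}_{G_\tau}$ (for $\tau\in\Sigma_e\cup\Sigma_c$, with $G_\tau$ a similitude group and similitude character determined by $\nu$). Factors of the first kind are liftable by Proposition~\ref{prop:smoothnessmrtame}, and factors of the second kind are liftable by Corollary~\ref{cor:mrtamefixed}, after a further \'{e}tale local extension of $\O$ if needed (harmless for our applications). A product of liftable deformation functors is manifestly liftable by lifting coordinate by coordinate, so the first claim follows.

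For the tangent space dimension, the product decomposition expresses $H^1_{\mr,\nu}(\Gamma_L,\adzerorho)$ as a direct sum of tangent spaces of the factors. Combining the formulas of Proposition~\ref{prop:smoothnessmrtame} and Corollary~\ref{cor:mrtamefixed}, and using that the one-dimensional centers of the similitude groups $G_\tau$ are trivial as $T_{L,\tau}$-modules (so ``$h^0$ of the full adjoint $-\,1$'' rewrites as $h^0$ of the derived Lie algebra), yields
\[
 \dim_k H^1_{\mr,\nu}(\Gamma_L,\adzerorho) = \sum_{\tau\in\Sigma'_n} h^0(T_{L,\tau},\End\overline{W}_\tau) + \sum_{\tau\in\Sigma_e} h^0(T_{L,\tau}, \on{ad}^0\overline{W}_\tau) + \sum_{\tau\in\Sigma_c} h^0(T_{L,\tau\oplus\tau^*}, \on{ad}^0\overline{W}_{\tau\oplus\tau^*}),
\]
where $\on{ad}^0\overline{W}_\tau$ denotes the derived Lie algebra of $G_\tau$ with the adjoint action via the residual representation.

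The heart of the argument, and the step I expect to be the main obstacle, is verifying that this sum equals $h^0(\Gamma_L,\adzerorho)$. For this I would decompose $\End V$ as a $\Gamma_L$-representation using the decomposition \eqref{eq:modifieddecomposition} of $V$. Lemma~\ref{lem:zero} shows the off-diagonal blocks $\Hom(V_{[\tau_1]},V_{[\tau_2]})$ with $[\tau_1]\neq[\tau_2]$ contribute nothing to $\Gamma_L$-invariants, and Lemma~\ref{lem:schur} identifies each diagonal contribution with a $T_{L,\tau}$- or $T_{L,\tau\oplus\tau^*}$-invariant hom space of the factor. The subspace $\adzerorho\subset\End V$ is cut out by the pairing-preservation condition. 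For $\tau\in\Sigma_e\cup\Sigma_c$ the pairing on $V_{[\tau]}$ is nondegenerate, and the compatibility \eqref{eq:pairingseq} identifies the skew-adjoint part of the $T_{L,\tau}$- (or $T_{L,\tau\oplus\tau^*}$-) invariant endomorphisms of the factor with $h^0$ of $\on{ad}^0$. For $\tau\in\Sigma'_n$ the pairing is trivial on $V_{[\tau]}$ and perfect between $V_{[\tau]}$ and $V_{[\tau^*]}$, so the pairing-preservation condition determines the $[\tau^*]$-block from the $[\tau]$-block and the pair $\{\tau,\tau^*\}$ contributes one copy of $\End(\overline{W}_\tau)^{T_{L,\tau}}$. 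Matching these term-by-term with the displayed sum completes the proof. The delicate point to verify is that the identifications of Lemma~\ref{lem:schur} are compatible with the pairing structure erected in Section~\ref{sec:liftingwithpairings}, so that ``skew-adjoint'' on each side really corresponds under the isomorphism.
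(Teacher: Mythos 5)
Your proposal is correct and follows essentially the same route as the paper's proof: reduce to the tame factors via Proposition~\ref{prop:defproduct}, invoke Proposition~\ref{prop:smoothnessmrtame} and Corollary~\ref{cor:mrtamefixed} for liftability and the per-factor tangent space dimensions, and then use Lemma~\ref{lem:schur} together with the pairing-compatibility \eqref{eq:pairingseq} to identify the resulting sum with $h^0(\Gamma_L,\adzerorho)$. The only cosmetic difference is that you invoke Lemma~\ref{lem:zero} for the off-diagonal vanishing explicitly, whereas the paper subsumes that step inside Lemma~\ref{lem:schur}, and you correctly flag the pairing-compatibility check as the one point requiring care, which is exactly where the paper's proof does its work.
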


\begin{proof}
Liftability is a consequence of Proposition~\ref{prop:defproduct} and the smoothness of the minimally ramified lifting ring for representations of $T_{L,\tau}$ (Proposition~\ref{prop:smoothnessmrtame} and Corollary~\ref{cor:mrtamefixed}). 
By Corollary~\ref{cor:mrtamefixed}, for $\tau \in \Sigma_e$
the dimension of the tangent space of $D^{\mr,\nu}_{G_\tau}$ is $h^0(T_{L,\tau},\on{ad} \overline{W}_\tau) - 1 = h^0(T_{L,\tau}, \on{ad}^0 \overline{W}_\tau)$, and for $\tau \in \Sigma_c$ the dimension is $h^0(T_{L,\tau \oplus \tau^*},\on{ad} \overline{W}_{\tau \oplus \tau^*}) - 1 = h^0(T_{L,\tau\oplus \tau^*}, \on{ad}^0 \overline{W}_{\tau \oplus \tau^*})$.  For $\tau \in \Sigma'_n$, by Proposition~\ref{prop:smoothnessmrtame}  the dimension of the tangent space of $D^{\mr}_{G_\tau}$ is $h^0(T_{L,\tau},\on{ad} \overline{W}_\tau)$.  Using Proposition~\ref{prop:defproduct}, we see that the dimension of the tangent space of the minimally ramified deformation condition is
\[
 \sum_{\tau \in \Sigma_e} h^0(T_{L,\tau},\on{ad}^0 \overline{W}_\tau) + \sum_{\tau \in \Sigma_c} h^0(T_{L,\tau\oplus \tau^*},\on{ad}^0 \overline{W}_{\tau \oplus \tau^*}) +\sum_{\tau \in \Sigma'_n} h^0(T_{L,\tau},\on{ad} \overline{W}_\tau).
\]

It remains to identify this quantity with $h^0(\Gamma_L,\adzerorho)$.  Using Lemma~\ref{lem:schur}
\begin{align*}
H^0(\Gamma_L,\End({V})) &= \Hom_{k[\Gamma_L]}({V},{V}) \\
&= \bigoplus_{\tau \in \Sigma_e \cup \Sigma_n} \Hom_{T_{L,\tau}}(\overline{W}_\tau ,\overline{W}_\tau)  \oplus \bigoplus_{\tau \in \Sigma_c} \Hom_{T_{L,\tau \oplus \tau^*}}(\overline{W}_{\tau \oplus \tau^*},\overline{W}_{\tau \oplus \tau^*}) \\
&= \bigoplus_{\tau \in \Sigma_e \cup \Sigma_n} H^0 (T_{L,\tau}, \End(\overline{W}_\tau))  \oplus \bigoplus_{\tau \in \Sigma_c} H^0 (T_{L,\tau \oplus \tau^*}, \End(\overline{W}_{\tau \oplus \tau^*})) .
\end{align*}
We are interested in $H^0(\Gamma_L, \adzerorho)$: the elements $\psi \in H^0(\Gamma_L,\End({V}))$ compatible with the pairing on ${V}$ in the sense that for $v, v' \in {V}$
\[
 \langle \psi v, \psi v' \rangle = \langle v, v' \rangle.
\]
The pairing on ${V}_\tau = \tau \otimes \overline{W}_\tau$ is induced by the pairings on $\overline{W}_\tau$ and $\tau$ when $\tau \in \Sigma_e$, and is induced by the pairings on $\overline{W}_{\tau \oplus \tau^*}$ and $\tau \oplus \tau^*$ when $\tau \in \Sigma_c$.  When $\tau \in \Sigma'_n$, the pairing on ${V}_\tau \oplus {V}_{\tau^*}$ comes from the $\Gamma_{L,\tau}$-isomorphism $V_\tau \simeq V_{\tau^*}^\vee$ which in turn comes from the $T_{L,\tau}$-isomorphism $\overline{W}_\tau \simeq \overline{W}_{\tau^*}^\vee$.  So $\psi$ is compatible with the pairing if and only if 
\begin{itemize}
 \item when $\tau \in \Sigma_e$, the associated $\psi_\tau \in H^0 (T_{L,\tau}, \End(\overline{W}_\tau))$ is compatible with the pairing on $\overline{W}_\tau$;
 \item  when $\tau \in \Sigma_c$, the associated $\psi_\tau \in H^0(T_{L,\tau\oplus \tau^*}, \End(\overline{W}_{\tau \oplus \tau^*})$ is compatible with the pairing on $\overline{W}_{\tau \oplus \tau^*}$;
 \item when $\tau \in \Sigma'_n$, the associated $\psi_\tau$ and $\psi_{\tau^*}$ are identified by duality and the isomorphism $\overline{W}_\tau \simeq \overline{W}_{\tau^*}^\vee$. 
\end{itemize}
In the first two cases, $\ad^0 \overline{W}_\tau$ and $\ad^0 \overline{W}_{\tau \oplus \tau^*}$ are the symplectic or orthogonal Lie algebra, consisting exactly of endomorphisms compatible with the pairing on $\overline{W}_\tau$.  In the third, we just choose one of $\psi_\tau$ and $\psi_{\tau^*}$ without restriction, which determines the other.  Thus we see
\[
 H^0(\Gamma_L,\adzerorho) = \bigoplus_{\tau \in \Sigma_e} H^0(T_{L,\tau}, \on{ad}^0 \overline{W} _\tau) \oplus \bigoplus_{\tau \in \Sigma_c} H^0(T_{L,\tau \oplus \tau^*}, \on{ad}^0 \overline{W} _{\tau \oplus \tau^*}) \oplus \bigoplus_{ \tau \in \Sigma'_n} H^0(T_{L,\tau}, \on{ad} \overline{W} _\tau). 
\]
\end{proof}

The Corollary is a more precise version of Theorem~\ref{thm:maintheorem}.


\begin{thebibliography}{CHT08}

\bibitem[Boo]{boohergeometric}
Jeremy Booher, \emph{Producing geometric deformations of orthogonal and
  symplectic galois representations}, preprint.

\bibitem[Boo16]{booher16}
\bysame, \emph{Geometric deformations of orthogonal and symplectic galois
  representations}, Ph.D. thesis, Stanford, 2016.

\bibitem[BT71]{boreltits71}
A.~Borel and J.~Tits, \emph{\'{E}l\'ements unipotents et sous-groupes
  paraboliques de groupes r\'eductifs. {I}}, Invent. Math. \textbf{12} (1971),
  95--104. \MR{0294349 (45 \#3419)}

\bibitem[CGP15]{pred15}
Brian Conrad, Ofer Gabber, and Gopal Prasad, \emph{Pseudo-reductive groups},
  second ed., New Mathematical Monographs, vol.~26, Cambridge University Press,
  Cambridge, 2015. \MR{3362817}

\bibitem[CHT08]{cht08}
Laurent Clozel, Michael Harris, and Richard Taylor, \emph{Automorphy for some
  {$l$}-adic lifts of automorphic mod {$l$} {G}alois representations}, Publ.
  Math. Inst. Hautes \'Etudes Sci. (2008), no.~108, 1--181, With Appendix A,
  summarizing unpublished work of Russ Mann, and Appendix B by Marie-France
  Vign{\'e}ras. \MR{2470687 (2010j:11082)}

\bibitem[Con14]{conrad14}
Brian Conrad, \emph{Reductive group schemes}, Autour des sch\'emas en groupes.
  {V}ol. {I}, Panor. Synth\`eses, vol. 42/43, Soc. Math. France, Paris, 2014,
  pp.~93--444. \MR{3362641}

\bibitem[SAGE]{sage}
The~Sage Developers, \emph{{S}agemath, the {S}age {M}athematics {S}oftware
  {S}ystem ({V}ersion 6.7)}, 2015, {\tt http://www.sagemath.org}.
 
  \bibitem[EGAIV$_3$]{egaiv3}
A.~Grothendieck, \emph{\'{E}l\'ements de g\'eom\'etrie alg\'ebrique. {IV}.
  \'{E}tude locale des sch\'emas et des morphismes de sch\'emas. {III}}, Inst.
  Hautes \'Etudes Sci. Publ. Math. (1966), no.~28, 255. \MR{0217086}

\bibitem[GY03]{gy03}
Wee~Teck Gan and Jiu-Kang Yu, \emph{Sch\'emas en groupes et immeubles des
  groupes exceptionnels sur un corps local. {I}. {L}e groupe {$G\sb 2$}}, Bull.
  Soc. Math. France \textbf{131} (2003), no.~3, 307--358. \MR{2017142
  (2004j:14049)}

\bibitem[Jan04]{jantzen04}
Jens~Carsten Jantzen, \emph{Nilpotent orbits in representation theory}, Lie
  theory, Progr. Math., vol. 228, Birkh\"auser Boston, Boston, MA, 2004,
  pp.~1--211. \MR{2042689 (2005c:14055)}

\bibitem[Mat89]{crt}
Hideyuki Matsumura, \emph{Commutative ring theory}, second ed., Cambridge
  Studies in Advanced Mathematics, vol.~8, Cambridge University Press,
  Cambridge, 1989, Translated from the Japanese by M. Reid. \MR{1011461}

\bibitem[Maz97]{mazur95}
Barry Mazur, \emph{An introduction to the deformation theory of {G}alois
  representations}, Modular forms and {F}ermat's last theorem ({B}oston, {MA},
  1995), Springer, New York, 1997, pp.~243--311. \MR{1638481}

\bibitem[McN08]{mcninch08}
George~J. McNinch, \emph{The centralizer of a nilpotent section}, Nagoya Math.
  J. \textbf{190} (2008), 129--181. \MR{2423832 (2009d:20110)}

\bibitem[PY06]{py06}
Gopal Prasad and Jiu-Kang Yu, \emph{On quasi-reductive group schemes}, J.
  Algebraic Geom. \textbf{15} (2006), no.~3, 507--549, With an appendix by
  Brian Conrad. \MR{2219847}

\bibitem[Ram99]{ram99}
Ravi Ramakrishna, \emph{Lifting {G}alois representations}, Invent. Math.
  \textbf{138} (1999), no.~3, 537--562. \MR{1719819 (2000j:11167)}

\bibitem[Ram02]{ram02}
\bysame, \emph{Deforming {G}alois representations and the conjectures of
  {S}erre and {F}ontaine-{M}azur}, Ann. of Math. (2) \textbf{156} (2002),
  no.~1, 115--154. \MR{1935843 (2003k:11092)}

\bibitem[Sch68]{schlessinger68}
Michael Schlessinger, \emph{Functors of {A}rtin rings}, Trans. Amer. Math. Soc.
  \textbf{130} (1968), 208--222. \MR{0217093}

\bibitem[Ser77]{serre77}
Jean-Pierre Serre, \emph{Linear representations of finite groups},
  Springer-Verlag, New York-Heidelberg, 1977, Translated from the second French
  edition by Leonard L. Scott, Graduate Texts in Mathematics, Vol. 42.
  \MR{0450380 (56 \#8675)}
  
 \bibitem[SGA1]{sga1}
\emph{Rev\^etements \'etales et groupe fondamental ({SGA} 1)}, Documents
  Math\'ematiques (Paris) [Mathematical Documents (Paris)], 3, Soci\'et\'e
  Math\'ematique de France, Paris, 2003, S{\'e}minaire de g{\'e}om{\'e}trie
  alg{\'e}brique du Bois Marie 1960--61. [Algebraic Geometry Seminar of Bois
  Marie 1960-61], Directed by A. Grothendieck, With two papers by M. Raynaud,
  Updated and annotated reprint of the 1971 original [Lecture Notes in Math.,
  224, Springer, Berlin; MR0354651 (50 \#7129)]. \MR{2017446 (2004g:14017)}

\bibitem[SGA3]{sga3}
\emph{Sch\'emas en groupes. {III}: {S}tructure des sch\'emas en groupes
  r\'eductifs}, S\'eminaire de G\'eom\'etrie Alg\'ebrique du Bois Marie 1962/64
  (SGA 3). Dirig\'e par M. Demazure et A. Grothendieck. Lecture Notes in
  Mathematics, Vol. 153, Springer-Verlag, Berlin-New York, 1970. \MR{0274460}



\bibitem[SS70]{ss70}
T.~A. Springer and R.~Steinberg, \emph{Conjugacy classes}, Seminar on
  {A}lgebraic {G}roups and {R}elated {F}inite {G}roups ({T}he {I}nstitute for
  {A}dvanced {S}tudy, {P}rinceton, {N}.{J}., 1968/69), Lecture Notes in
  Mathematics, Vol. 131, Springer, Berlin, 1970, pp.~167--266. \MR{0268192}

\bibitem[Tay08]{taylor08}
Richard Taylor, \emph{Automorphy for some {$l$}-adic lifts of automorphic mod
  {$l$} {G}alois representations. {II}}, Publ. Math. Inst. Hautes \'Etudes Sci.
  (2008), no.~108, 183--239. \MR{2470688 (2010j:11085)}

\bibitem[Til96]{tilouine96}
Jacques Tilouine, \emph{Deformations of {G}alois representations and {H}ecke
  algebras}, Published for The Mehta Research Institute of Mathematics and
  Mathematical Physics, Allahabad; by Narosa Publishing House, New Delhi, 1996.
  \MR{1643682}

\end{thebibliography}

\def\cftil#1{\ifmmode\setbox7\hbox{$\accent"5E#1$}\else
  \setbox7\hbox{\accent"5E#1}\penalty 10000\relax\fi\raise 1\ht7
  \hbox{\lower1.15ex\hbox to 1\wd7{\hss\accent"7E\hss}}\penalty 10000
  \hskip-1\wd7\penalty 10000\box7} \def\cprime{$'$} \def\cprime{$'$}
\providecommand{\bysame}{\leavevmode\hbox to3em{\hrulefill}\thinspace}
\providecommand{\MR}{\relax\ifhmode\unskip\space\fi MR }
\providecommand{\MRhref}[2]{%
  \href{http://www.ams.org/mathscinet-getitem?mr=#1}{#2}
}
\providecommand{\href}[2]{#2}

\end{document}